\newtheorem{thm}{Theorem}[section]
\newtheorem{cor}[thm]{Corollary}
\newtheorem{lemma}[thm]{Lemma}
\newtheorem{prop}[thm]{Proposition}
\newtheorem*{thmA}{Theorem A}
\newtheorem*{thmB}{Theorem B}
\newtheorem*{thmC}{Theorem C}
\newtheorem*{thmD}{Theorem D}
\newcommand{\thmAref}{\hyperlink{thm:A}{Theorem~A}\xspace}
\newcommand{\thmBref}{\hyperlink{thm:B}{Theorem~B}\xspace}
\newcommand{\thmCref}{\hyperlink{thm:C}{Theorem~C}\xspace}
\newcommand{\thmDref}{\hyperlink{thm:D}{Theorem~D}\xspace}
\newtheorem{dfn&prop}[thm]{Definition and Proposition}
\newtheorem{dfn&thm}[thm]{Definition and Theorem}
\theoremstyle{definition}
\newtheorem{defn}[thm]{Definition}
\newtheorem{observation}[thm]{Observation}
\newtheorem*{structure}{Structure of the article}
\newtheorem{rem}[thm]{Remark}
\theoremstyle{remark}
\newtheorem*{Claim}{Claim}
\newtheorem*{remark}{Remark}
\numberwithin{equation}{section}
\newenvironment{subproof}{\begin{proof}[Proof of claim.]}{%
	\end{proof}}
\DeclareMathOperator{\Mod}{mod}
\DeclareMathOperator{\Per}{Per}
\DeclareMathOperator{\Crit}{Crit}
\DeclareMathOperator{\GO}{GO}
\DeclareMathOperator{\diam}{diam}
\def\phi{\varphi}
\def\F{{\mathcal{F}}}
\newcommand{\A}{\mathbb{A}}
\newcommand{\s}{\mathbb{S}}
\newcommand{\CC}{\widehat{\mathbb{C}}}
\def\chat{\widehat{\mathbb{C}}}
\def\C{{\mathbb{C}}}
\def\D{{\mathbb{D}}}
\def\N{{\mathbb{N}}}
\def\Z{{\mathbb{Z}}}
\def\R{{\mathbb{R}}}
\newcommand{\cl}{\overline}
\newcommand{\Orb}{\operatorname{GO}}
\newcommand{\la}{\lambda}
\newcommand{\defmap}[5]{
  \begin{array}{rccc}
    #1: & #2 &\longrightarrow &#3\\
        & #4 &\longmapsto & #5
  \end{array}
}
\DeclareMathOperator{\interior}{int}
\newcommand*{\defeq}{\mathrel{\vcenter{\baselineskip0.5ex \lineskiplimit0pt
			\hbox{\scriptsize.}\hbox{\scriptsize.}}}%
	=}
\newcommand{\eqdef}{=\mathrel{\vcenter{\baselineskip0.5ex \lineskiplimit0pt
			\hbox{\scriptsize.}\hbox{\scriptsize.}}}}
\title{Grand orbit relations in wandering domains}
\author[V. Evdoridou]{Vasiliki Evdoridou}
\author[N. Fagella]{N\'uria Fagella$^*$}
\author[L. Geyer]{Lukas Geyer}
\author[L. Pardo-Simón]{Leticia Pardo-Sim\'on}
\address{\noindent School of Mathematics and Statistics \\The Open University\\ Walton Hall\\ Milton Keynes MK7 6AA\\
	UK \\ \textsc{\newline \indent \href{https://orcid.org/0000-0002-5409-2663}{\includegraphics[width=1em,height=1em]{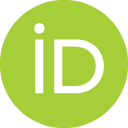} {\normalfont https://orcid.org/0000-0002-5409-2663}}}}
\email{vasiliki.evdoridou@open.ac.uk}
\address{\noindent Dep. de Matemàtiques i Informàtica\\ Universitat de Barcelona\\ Catalonia\\ Spain\\
	\newline  Centre de Recerca Matemàtica\\ Bellaterra\\ Catalonia\\ Spain.
	\textsc{\newline \indent 
		\href{https://orcid.org/0000-0002-5466-0579%
		}{\includegraphics[width=1em,height=1em]{orcid2.png} {\normalfont https://orcid.org/0000-0002-5466-0579}}
}}
\email{nfagella@ub.edu}
\address{\noindent Dept. of Mathematical Sciences\\ Montana State University, Bozeman\\ MT 59717–2400\\ USA
	\textsc{\newline \indent 
		\href{https://orcid.org/0000-0001-5889-9037%
		}{\includegraphics[width=1em,height=1em]{orcid2.png} {\normalfont https://orcid.org/0000-0001-5889-9037}}
}}
\email{geyer@montana.edu}
\address{\noindent Dep. de Matemàtiques i Informàtica\\ Universitat de Barcelona\\ Catalonia\\ Spain\\
	\textsc{\newline \indent 
		\href{https://orcid.org/0000-0003-4039-5556%
		}{\includegraphics[width=1em,height=1em]{orcid2.png} {\normalfont https://orcid.org/0000-0003-4039-5556}}
}}
\email{lpardosimon@ub.edu}
\thanks{2020 Mathematics Subject Classification. Primary 37F10;
  secondary 30D05, 37F30, 30C62. Key words: entire functions, wandering
  domains, Teichmüller space, quasiconformal surgery.\\ *Supported by the Spanish State Research Agency through the MdM grant CEX2020-001084-M and grant PID2020-118281GB-C32; and by the Catalan government through ICREA Academia 2020.
  }
\begin{document}

\begin{abstract} One of the fundamental distinctions in McMullen and
  Sullivan's description of the Teichmüller space of a complex
  dynamical system is between discrete and indiscrete grand orbit
  relations. We investigate these on the Fatou set of transcendental
  entire maps and provide criteria to distinguish between the two
  types.  Furthermore, we show that discrete and indiscrete grand
  orbit relations may coexist non-trivially in a wandering domain, a
  phenomenon which does not occur for any other type of Fatou
  component. One of the tools used is a novel quasiconformal surgery
  technique of independent interest.
\end{abstract}
\maketitle

\section{Introduction}
\label{sec:intro}
Given a holomorphic endomorphism $f\colon \mathcal{S} \to \mathcal{S}$
on a Riemann surface $\mathcal{S}$, we consider the dynamical system
generated by the iterates of $f$, denoted
$f^n \defeq f\circ \cdots \circ f$, $n$ times. These classes of
dynamical systems appear naturally as complexifications of real
analytic iterative processes, but they are also of independent
interest because of their deep connections to other areas of
mathematics like analysis, topology, or number theory.  We consider two
different settings: If
$\mathcal{S} = \widehat{\C}\defeq \C\cup \{\infty\}$, then $f$ is a
rational map, while if $\mathcal{S}= \C$ and $f$ does not extend to
$\widehat{\C}$ (i.e., infinity is an essential singularity), then $f$
is a transcendental entire map.

There is a dynamically natural partition of $\mathcal{S}$ into the
\textit{Fatou set}, $F(f)$, that is, the set of points
$z\in \mathcal{S}$ where the iterates $\{f^n\}_{n\in \N}$ form a
normal family in some neighborhood of $z$, and its complement, the
\textit{Julia set} $J(f)\defeq \mathcal{S}\setminus F(f)$, which is
the locus of chaotic dynamics. A connected component of $F(f)$ is
called a \textit{Fatou component} and it is either periodic,
preperiodic, or wandering, the latter being possible only when $f$ is
transcendental. For background on the dynamics of rational and
transcendental maps, see, e.g., \cite{milnor_book, bergweiler}.

Given a point $z\in \mathcal{S}$, its {\em orbit} is the set of its
forward iterates $(f^n(z))$ for $n\in \N$, while its {\em grand orbit}
is defined as
\[
  \GO(z)\defeq \{w\in\mathcal{S} \mid f^n(z)=f^m(w) \text{ for some }
  n,m\in \N \}. 
\] 
The grand orbit of a set $U\subset \mathcal{S}$, denoted by $\GO(U)$,
is the union of all grand orbits of the points in $U$.  The
equivalence relation on $V = \GO(U)$ whose equivalence classes consist
of grand orbits is known as the {\em grand orbit relation}. We say it
is {\em discrete} if $\GO(z)$ is a discrete set for all $z\in V$;
otherwise we call it {\em indiscrete}.

It is not hard to show that, in the case of (pre)periodic Fatou
components, after removing the grand orbits of periodic points and
singular values, the nature of the grand orbit relation depends only
on the type of component considered (basin of attraction, parabolic
basin, rotation or Baker domain) \cite{McMS,FagHen06}.

However, there are examples of wandering domains with discrete, and
examples with indiscrete grand orbit relations \cite{FagHen09}. Our
goal in this paper is to further explore orbit relations in wandering
domains. More precisely, we will provide necessary and sufficient
criteria for discreteness of the grand orbit equivalence relation and,
rather strikingly, we prove the existence of wandering domains
containing subdomains, disjoint from grand orbits of singular values,
with different grand orbit relations.

For our examples, we first construct an abstract model of the desired
internal dynamics as an infinite sequence of Blaschke products. Using
quasiconformal surgery, we ``paste'' this model into an actual
wandering domain of an entire function. Since our quasiconformal
surgery technique employs some novel techniques and is of independent
interest, we state it as a separate result.

\subsection*{Motivation and statement of results}

It is frequently of interest to understand the conjugacy classes of
dynamical systems, where the type of conjugacies (conformal, smooth,
quasiconformal, topological, measurable, etc.) may depend on the
particular setting. In complex dynamics, quasiconformal conjugacies
have turned out to be particularly important. We are interested in the
deformation space (also known as \emph{moduli space}) of a given
holomorphic map $f$, rational or transcendental, which consists of the
set of maps which are quasiconformally conjugate to $f$, modulo
conformal conjugacy. As in the classical case of Riemann surfaces,
moduli spaces often have orbifold singularities, and it is frequently
more convenient to work with \emph{Teichmüller spaces}, constructed
as marked versions of moduli spaces.

In \cite{McMS}, McMullen and Sullivan developed a very general
framework of moduli and Teichmüller spaces for complex dynamical
systems. In addition, they showed that the Teichmüller space of a
rational map is a finite-dimensional complex manifold, naturally
factoring as the product of Teichmüller spaces associated to
periodic Fatou components and one factor corresponding to invariant
line fields on the Julia set.

More precisely, given a rational or entire map $f$, one considers the
set of {\em (dynamically) marked points} of $f$, defined as
\[
  \widehat{J}(f)\defeq \cl{\GO(S(f) \cup \Per(f))},
\]
where $S(f)$ denotes the set of singular values of $f$ (critical and
asymptotic values, and accumulations thereof) and $\Per(f)$ the set of
periodic points. This set is a completely invariant closed set
containing the Julia set $J(f)$, since repelling periodic points are
dense in $J(f)$. Its complement
$\widehat{F}(f) \defeq \C \setminus \widehat{J}(f)$ is thus an open
completely invariant subset of the Fatou set, and the restriction
$f|_{\widehat{F}(f)}:\widehat{F}(f) \to \widehat{F}(f)$ is a covering
map.

Each grand orbit $V$ of a connected component of $\widehat{F}(f)$
contributes a factor $\mathcal{T}(V,f)$ to the Teichmüller space
$\mathcal{T}(f)$ of $f$ in the following way: If the grand orbit
relation on $V$ is discrete, then its grand orbit space $V/f$ is a
(connected) Riemann surface, and $\mathcal{T}(V,f)$ is the
Teichmüller space of $V/f$. If the grand orbit relation is
indiscrete, then either all components of $V$ are punctured disks or
they are all annuli of finite modulus, and the closures of grand orbit
equivalence classes consist of disjoint unions of analytic Jordan
curves. If $V_0$ is any component of $V$, these curves foliate $V_0$,
and in this case $\mathcal{T}(V,f)$ is the Teichmüller space of the
foliated disk or annulus $V_0$. (This construction does not depend on
the choice of the component $V_0$.)

Lastly, there is a factor $\mathcal{T}(\widehat{J}(f),f)$ supported on
$\widehat{J}(f)$. In the case of rational maps,
$\mathcal{T}(\widehat{J}(f),f)$ corresponds to the space of measurable
invariant line fields on the Julia set, and it is conjectured that it
is always trivial except in the case of flexible Lattès maps.

In order to use this description of $\mathcal{T}(f)$ for a
transcendental entire map $f$, it is essential to understand the
nature of the grand orbit relation in the components of
$\widehat{F}(f)$. For the components contained in basins of
attraction, parabolic basins, and rotation domains, the situation is
exactly like it is for rational maps, see \cite{FagHen09}. (The only
difference is that there may be infinitely many such grand orbits, and
each one may contain infinitely many grand orbits of singular values.)

The situation becomes more interesting if $V$ is a subset of the grand
orbit of a wandering domain or of a cycle of Baker domains. In the case
of a Baker domain, the grand orbit relation is discrete, but the type
of the Riemann surface $V/f$ depends on the type of Baker domain
(hyperbolic, simply or doubly parabolic, see \cite{FagHen06,
  konig}). As pointed out above, in the case of a wandering domain, examples in
\cite{FagHen09} show that the grand orbit relation can be either
discrete or indiscrete. Given the variety of possible dynamics in
wandering domains, two natural questions arise.
\begin{enumerate}
\item[\textbf{Q1.}] Can points with discrete and indiscrete grand
  orbits coexist in a single component of $\widehat{F}(f)$?
\item[\textbf{Q2.}] Can components of $\widehat{F}(f)$ with discrete
  and indiscrete grand orbit relations coexist in a single wandering
  domain?
\end{enumerate}

Our two first results in this paper address these questions and read
as follows.

\begin{thmA}[\hypertarget{thm:A}{GO relations in $\widehat{F}(f)$}]
  Let $f$ be an entire map and let $V$ be the grand orbit of a component
  of $\widehat{F}(f)$. Then the following are equivalent:
  \begin{enumerate}[\rm (a)]
  \item \label{thmA:a} The grand orbit relation in $V$ is discrete;
  \item \label{thmA:b} There exists $z\in V$ such that $\Orb(z)$ is
    discrete.
  \item \label{thmA:c} For each $z\in V$, there exists a neighborhood
    $U$ of $z$ such that $\Orb(z')\cap U=\{z'\}$ for all $z'\in U$.
  \end{enumerate}
  If, in addition, $V$ is contained in the grand orbit of a wandering
  domain, then \eqref{thmA:a}-\eqref{thmA:c} are equivalent to
  \begin{enumerate}[\rm (a)] \setcounter{enumi}{3}
  \item \label{thmA:d} There exists a simply connected
    domain $W \subset V$ such that $f\vert_{f^n(W)}$ is injective for
    all $n\geq 0$.
  \end{enumerate}
\end{thmA}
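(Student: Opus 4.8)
\medskip
\noindent\textbf{Proof strategy for Theorem A.}
The backbone is the fact recalled above that $f$ restricts to a covering map $f\colon\widehat F(f)\to\widehat F(f)$: thus $\widehat F(f)$ contains no critical points, each $f^n$ restricts to a covering of $\widehat F(f)$, and $f$ is a local isometry for the hyperbolic metric $\rho$ of $\widehat F(f)$ (each component being hyperbolic since $\widehat J(f)$ is infinite, and $\rho$ complete of curvature $-1$). I will use repeatedly: (i) every $f^n$ is $\rho$-distance non-increasing; (ii) closed $\rho$-balls contained in a component of $\widehat F(f)$ are compact; (iii) if $r<\operatorname{inj}_{\widehat F(f)}\!\big(f^n(p)\big)$ then $f^n$ maps $B_\rho(p,r)$ isometrically onto $B_\rho\!\big(f^n(p),r\big)$; (iv) $f^n$-preimages of closed discrete subsets of $\widehat F(f)$ are closed and discrete; (v) no point of $\widehat F(f)$ is preperiodic, since periodic points lie in $\widehat J(f)$ and $\widehat F(f)$ is completely invariant. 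The implication (a)$\Rightarrow$(b) is immediate from $\Orb(z)\subseteq\GO(z)$; I would obtain the rest of (a)--(c) by passing through the auxiliary property
\[
  (\ast)\colon\qquad \Orb(w)\text{ is closed and discrete in }\widehat F(f)\text{ for every }w\in V.
\]

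\emph{(b)$\Rightarrow(\ast)$:} if $\Orb(z_0)$ is discrete and $w$ lies in the same component as $z_0$, then $\rho(f^n w,f^n z_0)\le\rho(w,z_0)<\infty$ for all $n$ by (i), so an accumulation point of $\Orb(w)$ inside $\widehat F(f)$ would, by (ii), confine infinitely many distinct $f^n(z_0)$ to a compact set --- impossible; as $V=\GO(U_0)$, any $w\in V$ is grand-orbit equivalent to some $u\in U_0$, whence $\Orb(u)$ is discrete and $\Orb(w)$ differs from a tail of $\Orb(u)$ by finitely many points. \emph{$(\ast)\Rightarrow$(c):} if (c) failed at $z$ there would be $z_m\to z$ and $k_m\ge1$ with $f^{k_m}(z_m)\to z$; boundedness of $\{k_m\}$ is ruled out by (v), so $k_m\to\infty$, and then (i) gives $f^{k_m}(z)\to z$, so $\Orb(z)$ accumulates at $z\in\widehat F(f)$, against $(\ast)$. \emph{(c)$\Rightarrow(\ast)$:} if $\Orb(z)$ accumulated at $\eta\in\widehat F(f)$, the neighbourhood $U$ from (c) at $\eta$ would contain two orbit points $f^{n_1}(z),f^{n_2}(z)$ with $n_1<n_2$, so $f^{n_2}(z)\in\Orb\!\big(f^{n_1}(z)\big)\cap U$ and $f^{n_2}(z)\neq f^{n_1}(z)$ by (v), contradicting (c) with $z'=f^{n_1}(z)$. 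Hence (b)$\Leftrightarrow$(c)$\Leftrightarrow(\ast)$.

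It remains to prove $(\ast)\Rightarrow$(a), which I expect to be the crux. Assume $(\ast)$ and, for contradiction, that some $\GO(z)$ accumulates at $\zeta\in\widehat F(f)$; take distinct $w_j\in\GO(z)$ with $w_j\to\zeta$ and write $f^{a_j}(w_j)=f^{b_j}(z)$ with $a_j$ minimal. Using $(\ast)$ and (iv) one checks $a_j\to\infty$ (otherwise $\zeta$ would sit on $\Orb(z)$ and $w_j=\zeta$), and then $b_j\to\infty$; since $f^{a_j+k}(\zeta)$ lies at $\rho$-distance $\le\rho(\zeta,w_j)\to0$ from $f^{a_j+k}(w_j)=f^{b_j+k}(z)\in\Orb(z)$ for all $k\ge0$, we get $\operatorname{dist}_\rho\!\big(f^n(\zeta),\Orb(z)\big)\to0$. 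The heart of the proof is to turn this ``shadowing'' into a contradiction with $(\ast)$: combining it with $(\ast)$ for $\Orb(z)$ --- so consecutive orbit points have non-increasing $\rho$-distance and every $\rho$-ball meets $\Orb(z)$ finitely --- and with the local-isometry property, one should force either an exact coincidence $f^n(\zeta)\in\Orb(z)$ (so $\zeta\in\GO(z)$ and $\GO(z)$ accumulates at a point of its own orbit, excluded by $(\ast)$ via (c)), or an accumulation point of $\Orb(\zeta)$ inside $\widehat F(f)$ (again excluded by $(\ast)$). Equivalently, one may argue the contrapositive from the McMullen--Sullivan dichotomy quoted in the introduction: in the indiscrete case each component of $V$ is an annulus or punctured disc foliated by the compact Jordan curves $\overline{\GO(z)}$, and one must show those leaves are forced, by non-increasingness of $\rho$-distance, to recur into a fixed compact subset of $\widehat F(f)$, producing an orbit accumulating in $\widehat F(f)$ and hence violating $(\ast)$.

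Finally, suppose $V$ lies in the grand orbit of a wandering domain, so the components of $V$ are pairwise disjoint and $f$ maps distinct components to distinct components. \emph{(a)$\Rightarrow$(d):} when the grand orbit relation on $V$ is discrete, $V/f$ is a Riemann surface and, there being no critical points in $\widehat F(f)$, the projection $\pi\colon V\to V/f$ is a covering map; pick a coordinate disc $\Omega\subseteq V/f$ and let $W$ be a connected component of $\pi^{-1}(\Omega)$. Then $W$ is simply connected, $\pi|_W$ is injective, and $f^n(w)=f^n(w')$ for $w,w'\in W$ forces $w'\in\GO(w)$, hence $\pi(w)=\pi(w')$, hence $w=w'$; so $f^n|_W$ --- equivalently $f|_{f^n(W)}$ --- is injective for all $n\ge0$. \emph{(d)$\Rightarrow$(b):} by the equivalences above it is enough to produce one $z_0\in V$ with discrete orbit, and here the injectivity of all iterates on the simply connected $W$ is used essentially. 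Shrinking $W$ we may assume $\overline W$ is compact in $\widehat F(f)$, so $\operatorname{diam}_\rho f^n(W)\le\operatorname{diam}_\rho W=:D<\infty$ for all $n$ by (i). Fix $z_0\in W$; if $\Orb(z_0)$ accumulated at $\zeta\in\widehat F(f)$ along $f^{n_j}(z_0)\to\zeta$, choose $\delta\in\big(0,\tfrac12\operatorname{inj}_{\widehat F(f)}(\zeta)\big)$ with $B_\rho(z_0,\delta)\subseteq W$; then (iii) gives $f^{n_j}(W)\supseteq B_\rho\!\big(f^{n_j}(z_0),\delta\big)$ for all large $j$. These are $\rho$-balls of fixed radius $\delta$, all lying in the compact ball $\overline{B_\rho(\zeta,D+1)}$ for $j$ large, yet pairwise disjoint since the sets $f^{n_j}(W)$ lie in distinct components of $V$ --- contradicting finiteness of area. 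Hence $\Orb(z_0)$ is discrete, (b) holds, and (d) is equivalent to (a)--(c). The only genuinely delicate step in the whole scheme is the implication $(\ast)\Rightarrow$(a).
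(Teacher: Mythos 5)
Your strategy differs fundamentally from the paper's (which builds a holomorphic motion of grand orbits over a simply connected parameter domain in $\widehat{F}(f)$ and invokes the $\lambda$-lemma to show that discreteness is a local property and is constant on components, then deduces (d) from a "strong discreteness'' criterion). More importantly, your proof has a genuine gap that cannot be closed as written, centred on the auxiliary condition $(\ast)$ and the step $(\ast)\Rightarrow(\mathrm{a})$ that you yourself flag as "the crux.''

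The difficulty is that $(\ast)$ asserts closed discreteness of the \emph{forward} orbit, whereas (a)--(c) in Theorem~A are statements about the \emph{grand} orbit; the paper's symbol $\Orb$ is just an alias for $\GO$. Hyperbolic non-expansion under the self-covering $f:\widehat{F}(f)\to\widehat{F}(f)$ controls forward iteration but says nothing about preimages, which is precisely where indiscreteness lives. Concretely, take $f(z)=z^2$ and the component $\D\setminus\{0\}$ of $\widehat{F}(f)$: every forward orbit $\{f^n(z)\}$ converges to the superattracting point $0\in\widehat{J}(f)$ and is therefore closed and discrete in $\widehat{F}(f)$, so $(\ast)$ holds; yet the grand orbit of a point is dense in a sequence of circles, so the grand orbit relation is indiscrete. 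Thus $(\ast)$ does \emph{not} imply (a), and neither of the two routes you sketch (the "shadowing'' argument, or the appeal to the McMullen--Sullivan foliated dichotomy) can rescue it, because the claim is simply false in this setting. For the same reason your reading of (b) and (c) as forward-orbit statements proves only the weaker chain of equivalences among forward-orbit properties, which says nothing about~(a).

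To prove Theorem~A you need a mechanism that tracks \emph{all} grand-orbit representatives, preimages included, uniformly over a parameter region. This is exactly what the paper's Lemma on holomorphic motions of grand orbits provides: since $f$ is unbranched on $\widehat{F}(f)$, every branch of $f^{-n}\circ f^m$ continues analytically over a simply connected $U\subset\widehat{F}(f)$, giving a holomorphic motion $H$ with $H(\lambda,\GO(\lambda_0))=\GO(\lambda)$; the $\lambda$-lemma then upgrades $H(\lambda,\cdot)$ to a homeomorphism of $\overline{\GO(\lambda_0)}$, from which discreteness is immediately seen to be constant along $U$ and to be equivalent to the "strong'' local condition (c). Your reasoning for (a)$\Rightarrow$(d) via the quotient Riemann surface $V/f$ is sound, and your area/injectivity-radius argument for (d)$\Rightarrow$(b) is a nice, genuinely different idea that would work once (b) is correctly interpreted and the earlier equivalences are repaired; but without a grand-orbit analogue of $(\ast)\Rightarrow(\mathrm{a})$ the chain is broken.
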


While the previous theorem gives a negative answer to the first
question, our main result answers the second in the positive.

\begin{thmB}[\hypertarget{thm:B}{Coexistence of different GO relations
    in a wandering domain}]
  There exists a transcendental entire function $f$ with a simply
  connected wandering domain $U$ and two connected components $W_1$
  and $W_2$ of $U \cap \widehat{F}(f)$ such that $V_1=\GO(W_1)$ and
  $V_2=\GO(W_2)$ have discrete and indiscrete
  grand orbit relations, respectively.
\end{thmB}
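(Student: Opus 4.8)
The plan is to produce $f$ by quasiconformal surgery on top of an explicitly designed ``Blaschke model'' of the internal dynamics of $U$, and then to read off the two grand orbit relations from the characterizations in \thmAref.

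\medskip

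\noindent\textbf{Step 1: the abstract model.} First I would build an infinite sequence $(B_n\colon\D\to\D)_{n\ge 0}$ of finite Blaschke products of degree at least two, to serve, after uniformization, as the internal maps $U_0\xrightarrow{\ f\ }U_1\xrightarrow{\ f\ }U_2\xrightarrow{\ f\ }\cdots$ of a simply connected wandering domain, and I would arrange the post-critical behaviour so that the model displays \emph{two disjoint ``pockets''}. Writing $G_0=\id$ and $G_n=B_{n-1}\circ\cdots\circ B_0$, I want:
\begin{enumerate}[\rm(i)]
\item a \emph{discrete pocket}: a simply connected open set $\Omega_1\subset\D$, disjoint from the grand orbit of the critical values of the $B_n$, whose forward images $G_n(\Omega_1)$ stay in a region on which each $B_n$ is univalent and at definite hyperbolic distance from the critical orbits — so that, after surgery, each $f|_{f^n(\cdot)}$ will be injective on the corresponding subdomain, and \thmAref(d) will force a discrete grand orbit relation;
\item an \emph{indiscrete pocket}: an open set $\Omega_2\subset\D$, again disjoint from the grand orbit of the critical values, containing a point whose full set of iterated $G_n$-preimages accumulates \emph{inside} $\D$, so that the corresponding point of $\widehat F(f)$ will have a non-discrete grand orbit and the grand orbit relation there will be indiscrete.
\end{enumerate}
Concretely one can take the $B_n$ so that their post-critical sets lie in (or converge to) a fixed compact $K\subset\D$, route the channel of (i) so that $\Omega_1$ and its forward orbit avoid a neighbourhood of $K$, and place $\Omega_2$ where the degree-$2$ combinatorics around the critical points forces the preimage tree to pile up in the interior.

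\medskip

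\noindent\textbf{Step 2: realization by surgery.} Starting from an auxiliary transcendental entire function with a simply connected (escaping) wandering domain $U$ whose internal dynamics is univalent, I would use the quasiconformal surgery technique developed in this paper to \emph{replace} the internal dynamics of $\GO(U)$ by the prescribed sequence $(B_n)$. In each uniformizing chart $U_n\cong\D$ one interpolates a quasiconformal homeomorphism implementing the change of internal map; one spreads the resulting Beltrami coefficient over $\GO(U)$, pulls it back by the dynamics everywhere (it is trivial off $\GO(U)$), checks that its dilatation stays uniformly bounded away from $1$ along the whole infinite chain of charts, straightens by the measurable Riemann mapping theorem, and verifies that the straightened map is again entire and still has $U$ as a simply connected wandering domain with internal dynamics conjugate to $(B_n)$.

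\medskip

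\noindent\textbf{Step 3: reading off the conclusion.} In the resulting $f$, the singular values lying in $\GO(U)$ are the images of the critical values of the $B_n$, so $U\cap\widehat F(f)$ is $U$ with the closure of the grand orbit of these points removed. By construction this set has a component $W_1$ coming from $\Omega_1$, on which $f$ restricted to every forward image is injective, so $V_1=\GO(W_1)$ has discrete grand orbit relation by \thmAref; and a \emph{different} component $W_2$ coming from $\Omega_2$, containing a point whose iterated preimages accumulate in $U$, so that $V_2=\GO(W_2)$ has indiscrete grand orbit relation, again by \thmAref. This exhibits the desired $f$, $U$, $W_1$ and $W_2$.

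\medskip

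\noindent\textbf{Main obstacles.} The heart of the matter is the surgery step together with its compatibility with the model design. On the analytic side, because the wandering domain breaks into infinitely many pieces $U_n\to\infty$, the interpolating quasiconformal maps must be chosen with dilatation controlled \emph{uniformly in $n$}, or else the global Beltrami coefficient is not bounded away from $1$ and MRMT fails; and one must guarantee that the straightened map acquires no poles, remains entire, and keeps $U$ wandering with exactly the prescribed internal dynamics. On the combinatorial side, one must ensure that the closure of the grand orbit of the critical values genuinely \emph{separates} the discrete pocket from the indiscrete pocket into two distinct components of $U\cap\widehat F(f)$ — neither so large as to swallow a pocket nor so small as to leave them in a common component — and that both pockets are honestly non-empty open sets.
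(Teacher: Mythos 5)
Your high-level architecture matches the paper's: build a Blaschke model for the internal dynamics, realize it by the uniform surgery of \thmCref, and read off the grand orbit relations via \thmAref and the transfer result (\Cref{lem_new_discrete_preserved}). But there are two places where your proposal is substantively short of a proof, one of them being precisely the step you correctly flag as ``the heart of the matter.''

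First, Step~1 is a wish, not a construction. You ask for a sequence $(B_n)$ carrying a ``discrete pocket'' and an ``indiscrete pocket,'' and assert ``one can take the $B_n$ so that\dots'' without exhibiting one. This is where the paper's key idea (due to McMullen, as acknowledged) lives: one fixes a hyperbolic polynomial $P(z)=\lambda z+z^2$, $0<|\lambda|<1$, with a quasicircle Julia set, and takes $b_n$ to be the Blaschke products conformally conjugate to $P|_{D_n}\colon D_n\to D_{n+1}$, where $(D_n)$ is a growing nest of equipotential disks (\Cref{prop_McMullen}). This single choice automatically produces both pockets (the attracting basin of $0$ intersected with $D_0$ gives the discrete one, the basin of $\infty$ intersected with $D_0$ gives the indiscrete one), guarantees uniform hyperbolicity (the critical points are confined to $D_0$ and the hyperbolic densities of the $D_n$ decrease), and --- crucially for your ``combinatorial obstacle'' --- provides a built-in separator: the Julia set $J(P)$, which lies in $\widehat S(P)$, cleanly divides the two pockets into distinct components of $\widehat F(P)\cap D_0$. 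Your proposal correctly names the separation issue as an obstacle, but does not address it; the polynomial model is exactly what resolves it. The paper then needs a further careful argument (\Cref{thm:poly}, especially the inclusion $\widehat J(f)\cap U_0\subset h_0^{-1}(\widehat J(P)\cap D_0)$) to show that the post-surgery marked set in $U_0$ is controlled by $\widehat J(P)\cap D_0$, so that the two pockets survive as distinct components $W_1,W_2$ of $U_0\cap\widehat F(f)$. Your Step~3 assumes this correspondence without establishing it.

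Second, there is a concrete error in Step~2: you propose to start from an auxiliary wandering orbit ``whose internal dynamics is univalent'' and then replace it by the degree-$\ge 2$ sequence $(B_n)$. The surgery of \thmCref requires $\deg(f|_{U_n})=\deg(b_n)\ge 2$; one cannot paste a degree-$d$ Blaschke product into a univalent wandering domain by this method, since the covering structures do not match. This is why the paper constructs a template function $g$ (\Cref{prop_associates}) with an orbit of wandering domains of \emph{constant degree $d$}, no finite asymptotic values, and no spurious critical points in the grand orbit outside the orbit itself --- the last two properties being exactly what makes the singular-value bookkeeping in \Cref{thm:poly} tractable.

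Your awareness of the analytic side (uniform bounds on the Beltrami dilatation across the infinite chain, straightening, checking the result is entire with the prescribed internal dynamics) is accurate and is indeed what \thmCref supplies. The genuine gap is the absence of a concrete Blaschke model with the required dichotomy and a verified separation of the two pockets; the polynomial-on-equipotentials construction is not an optional detail but the mechanism that makes the whole argument close.
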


We shall see in \Cref{sec:coexistence} that in fact the number of
components of $U \cap \widehat{F}(f)$, for a wandering domain $U$, can
be arbitrarily large, and that many combinations in terms of orbit
relations are possible.

Observe that \thmBref provides only examples of {\em simply connected}
domains. This is not a consequence of the limitations of our
technique, but rather of the non-existence of such examples for
multiply connected wandering domains. It follows from \thmAref
and the dynamics of entire functions in multiply connected wandering
domains, as described in \cite{mcwd}, that grand orbit relations in multiply
connected wandering domains are always indiscrete; see
Corollary~\ref{prop:mcwd}.

The proof of \thmBref proceeds in two steps. We first build an
abstract model of the desired internal dynamics in the orbit of the
wandering domain as a sequence of Blaschke products $(b_n:\D\to \D)$.
These maps $b_n$ reflect the dynamics of a fixed polynomial inside a
growing sequence of equipotential curves. Secondly, we show that the
sequence $(b_n)$ can be realized in a sequence of wandering domains of
a transcendental entire map, using quasiconformal surgery. Informally
speaking, we start with a given map $f$ possessing an orbit of
wandering domains, say $(U_n)$ with $f:U_n\to U_{n+1}$, and we
``replace'' $f$ inside each $U_n$ by the Blaschke product $b_n$.

This surgery procedure is new and, since it is performed in the whole
forward orbit of $U_0$ at once, it requires uniform estimates on the
quasiconformal constants of the gluing maps involved. In order to
establish these estimates, we require both sequences $(b_n)$ and
$(f|_{U_n})$ to be {\em uniformly hyperbolic}, which means that the
degrees in the sequences are uniformly bounded and that the critical
points stay within bounded hyperbolic distance from an orbit (see
Sections \ref{sec:hyp_BP} and \ref{sec:uniformlyWD}). The technique is
of independent interest and summarized in the following theorem.

\begin{thmC}[\hypertarget{thm:C}{A replacement surgery procedure}]
  Let $f$ be an entire function with an orbit of simply connected
  wandering domains $(U_n)$ and let $(b_n:\D\to\D)$ be a sequence of
  Blaschke products such that $2\leq \deg(f\vert_{U_n})=\deg(b_n)$ for
  all $n\geq 0$. Suppose both sequences $(f|_{U_n})$ and $(b_n)$ are
  uniformly hyperbolic. Then there exists an entire function $g$ with
  an orbit of simply connected wandering domains $(\widetilde{U}_n)$
  such that the following hold:
  \begin{enumerate}[\rm (a)]
  \item \label{replacement:a}
    There is a sequence of conformal maps
    $\theta_n: \widetilde{U}_n \to \D$ such that
    \[
      g\vert_{ \widetilde{U}_n} = \theta_{n+1}^{-1} \circ b_n\circ
      \theta_n.
    \]
  \item \label{replacement:b}
    There are neighborhoods $V$ and $\widetilde{V}$ of $\C
    \setminus \bigcup_{n\geq 0} U_n$ and $\C
    \setminus \bigcup_{n\geq 0} \widetilde{U}_n$, respectively, and 
    a quasiconformal map $\Phi:\C \to \C$ such that
    \[
      g\vert_{\widetilde{V}} = \Phi \circ f\vert_{V} \circ \Phi^{-1}.
    \]
    Furthermore, $\Phi(\partial U_n) = \partial\widetilde{U}_n$ for
    all $n$.
  \end{enumerate}
\end{thmC}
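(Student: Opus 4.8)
My plan is to transport the whole configuration into the disk model via Riemann maps, replace the Blaschke ``shadows'' of $f$ by the prescribed $b_n$ there, glue everything back by a quasiregular interpolation, and finally integrate the resulting almost complex structure; the entire difficulty will be to keep all quasiconformal estimates uniform in $n$, which is exactly what the two uniform hyperbolicity hypotheses should provide.

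\smallskip
\noindent\textit{Step 1 (uniformization and a uniform boundary conjugacy).} First I would fix an orbit $(p_n)$ in $(U_n)$ witnessing the uniform hyperbolicity of $(f\vert_{U_n})$, so $f(p_n)=p_{n+1}$, and Riemann maps $\varphi_n\colon\D\to U_n$ with $\varphi_n(0)=p_n$; then $B_n\defeq\varphi_{n+1}^{-1}\circ f\circ\varphi_n$ is a finite Blaschke product with $\deg B_n=d_n\defeq\deg(f\vert_{U_n})$ and $B_n(0)=0$. Likewise, after conjugating $b_n$ by the M\"obius automorphism $m_n$ of $\D$ carrying the orbit point witnessing the uniform hyperbolicity of $(b_n)$ to $0$, I may replace $b_n$ by $\widehat b_n$ with $\widehat b_n(0)=0$ (the $m_n$ being reabsorbed into the $\theta_n$ at the end). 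By the Schwarz lemma, $0$ is an attracting fixed point of every $B_n$ and every $\widehat b_n$, and $|B_n(z)|\le|z|$, $|\widehat b_n(z)|\le|z|$ on $\D$. Reading the uniform hyperbolicity of both sequences through $\varphi_n$, resp.\ $m_n$, yields uniform control on the two Blaschke sequences: $d_n\le D$, all zeros and critical points lie in a fixed $\overline{\D}_r$ with $r<1$, and (since $0$ is a common zero) the boundary circle maps $B_n\vert_{\partial\D}$, $\widehat b_n\vert_{\partial\D}$ are uniformly expanding; this is the role of Section~\ref{sec:hyp_BP}. Being uniformly expanding analytic circle covers with the same degree sequence $(d_n)$, $(B_n\vert_{\partial\D})$ and $(\widehat b_n\vert_{\partial\D})$ are both non-autonomously conjugate to the model sequence $(z\mapsto z^{d_n})$, hence to one another, by \emph{uniformly quasisymmetric} homeomorphisms $\gamma_n\colon\partial\D\to\partial\D$ with $\gamma_{n+1}\circ B_n=\widehat b_n\circ\gamma_n$; I would extend these to quasiconformal $\Gamma_n\colon\overline{\D}\to\overline{\D}$ equal to the identity on a fixed disk $\overline{\D}_\rho\supset\overline{\D}_r$, with $\sup_n\|\mu_{\Gamma_n}\|_\infty<1$.

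\smallskip
\noindent\textit{Step 2 (the quasiregular surgery map).} Set $G\defeq f$ on $\C\setminus\bigcup_nU_n$ and, on $U_n$, $G\defeq\varphi_{n+1}\circ\check b_n\circ\varphi_n^{-1}$ with $\check b_n\defeq\Gamma_{n+1}^{-1}\circ\widehat b_n\circ\Gamma_n\colon\D\to\D$. Then $\check b_n$ is a degree-$d_n$ quasiregular proper self-map of $\D$ that equals $\widehat b_n$ on $\overline{\D}_\rho$ (because $\Gamma_n=\mathrm{id}$ there and $\widehat b_n(\overline{\D}_\rho)\subset\overline{\D}_\rho$ by Schwarz), is quasiconformally conjugate to $\widehat b_n$, and satisfies $\check b_n\vert_{\partial\D}=\gamma_{n+1}^{-1}\circ\widehat b_n\circ\gamma_n=B_n\vert_{\partial\D}$; hence $G\vert_{U_n}$ agrees with $f$ on $\partial U_n$ and $G$ is a globally defined quasiregular self-map of $\C$ whose complex dilatation is supported on the union $X\defeq\bigcup_n\varphi_n(\{\rho<|z|<1\})$ of ``gluing collars'' and is bounded uniformly in $n$ by the uniform quasisymmetry of the $\gamma_n$. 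Moreover, since $|\widehat b_n(z)|\le\kappa|z|$ for $|z|\ge\rho$ with a uniform $\kappa<1$ (the zeros lying in $\overline{\D}_r\subset\overline{\D}_\rho$), the cores $\varphi_n(\overline{\D}_\rho)$ form a forward-absorbing sequence, so every $G$-orbit eventually lands in the cores and thereafter never returns to $X$.

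\smallskip
\noindent\textit{Step 3 (straightening and conclusion).} Pulling $\mu_G$ back along the iterates of $G$ gives a $G$-invariant Beltrami coefficient $\mu$, supported on the backward $G$-saturation of the collars. Assuming the estimates above are arranged so that $\|\mu\|_\infty<1$, the measurable Riemann mapping theorem yields a quasiconformal $\Phi\colon\C\to\C$ with $\mu_\Phi=\mu$, and $g\defeq\Phi\circ G\circ\Phi^{-1}$ is holomorphic on $\C$; since $G$ coincides with the transcendental entire $f$ off the countable union $\bigcup_nU_n$ (which is not a punctured neighbourhood of $\infty$), $g$ has an essential singularity at $\infty$ and is transcendental entire. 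Then $\widetilde U_n\defeq\Phi(U_n)$ are simply connected, form an orbit of wandering domains with $g(\widetilde U_n)=\widetilde U_{n+1}$, and $\Phi(\partial U_n)=\partial\widetilde U_n$; taking $\theta_n$ to be $m_n$ followed by the conformal map obtained by straightening $\Phi\vert_{U_n}$ gives $g\vert_{\widetilde U_n}=\theta_{n+1}^{-1}\circ b_n\circ\theta_n$, i.e.\ \eqref{replacement:a}, while \eqref{replacement:b} follows with $V$ and $\widetilde V$ chosen as neighbourhoods of $\C\setminus\bigcup_nU_n$, resp.\ $\C\setminus\bigcup_n\widetilde U_n$, on which $G=f$, resp.\ on which $g$ is holomorphic with $\mu$ vanishing nearby.

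\smallskip
\noindent\textit{Expected main obstacle.} The heart of the matter is making $\|\mu\|_\infty<1$: the surgery is performed on an \emph{infinite} orbit at once, each $U_n$ injecting dilatation through its gluing collar, and the naive construction above will fail because orbits that enter $U_n$ very close to $\partial U_n$ linger for arbitrarily many steps in the collars $\varphi_n(\{\rho<|z|<1\}),\varphi_{n+1}(\{\rho<|z|<1\}),\dots$ before falling into the cores, so that, a priori, $\|\mu\|$ tends to $1$ near the $\partial U_n$. Overcoming this is precisely what uniform hyperbolicity of both $(f\vert_{U_n})$ and $(b_n)$ is for --- it makes the circle conjugacies $\gamma_n$ uniformly quasisymmetric and bounds the degrees, hence makes all gluing and annulus-interpolation estimates uniform in $n$ --- but one must in addition engineer the gluing collars in a dynamically adapted way (so that their uniformly small dilatations do not pile up along any orbit) in order to get a genuinely quasiconformal $\Phi$, and hence a genuinely entire $g$. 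A secondary point, that $G$ be genuinely quasiregular across the possibly highly irregular boundaries $\partial U_n$, is handled by carrying out all gluings strictly inside $U_n$ along analytic curves where $f$ is holomorphic.
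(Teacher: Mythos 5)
Your blueprint follows the paper's route (uniformize to Blaschke models, build quasiconformal conjugacies equal to the identity on a core disk, glue to a quasiregular map, straighten), and you correctly flag where the trouble lies; but you do not resolve it, and as written two steps fail. In Step~2 your $\check b_n=\Gamma_{n+1}^{-1}\circ\widehat b_n\circ\Gamma_n$ agrees with $B_n$ only on $\partial\D$, so $G$ agrees with $f$ only on $\partial U_n$ --- which for a wandering domain may be a non--locally-connected continuum, across which quasiregular removability is not available, and onto which the Riemann map $\varphi_n$ need not even extend continuously, so ``$G|_{U_n}$ agrees with $f$ on $\partial U_n$'' is not even well defined. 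The paper's \Cref{thm:gluingmap} eliminates this by producing $h_n$ that intertwine $\widetilde b_n$ and $b_n$ on the entire outer region $C_n=\D\setminus b_n^{-1}(\D_r)$, not just on $\partial\D$, so that the modified map equals $f$ on an \emph{open} collar strictly inside $U_n$ and holomorphicity across the gluing locus is automatic. Constructing such an $h_n$ requires the iterative lifting of step~(4) of that proof, which in turn rests on \Cref{lem:uh-basics}\eqref{item:A_n_once} (each orbit crosses the fundamental annulus $A_n=b_n^{-1}(\D_r)\setminus\D_r$ at most once, so the successive preimage annuli used in the lifting are pairwise disjoint). None of that machinery appears in your sketch; you simply posit uniformly quasisymmetric $\gamma_n$ and one-shot interpolated $\Gamma_n$, which do not give the open-collar agreement that is needed.

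In Step~3 you define $\mu$ by pulling $\mu_G$ back along iterates of $G$ and correctly observe that this can drive $\|\mu\|_\infty\to1$, since orbits that linger near $\partial U_n$ pass through arbitrarily many non-conformal collars; you then defer to ``dynamically adapted collars'' without exhibiting them. The paper does not need this: because the quasiconformal maps $\psi_n=h_n\circ\varphi_n$ satisfy $\psi_{n+1}\circ F=b_n\circ\psi_n$ on all of $U_n$, one sets $\mu\defeq\psi_n^*\mu_0$ directly on $U_n$ (uniformly $K$-bounded and automatically $F$-invariant there) and extends to $F^{-m}\bigl(\bigcup_n U_n\bigr)$ by pullback under $F^m=f^m$, which is \emph{holomorphic} off $\bigcup_n U_n$, so no dilatation accumulates. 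Your own Step~2 already contains the functional equation $\Psi_{n+1}\circ G=\widehat b_n\circ\Psi_n$, so the correct invariant Beltrami is within reach, but the iterated-pullback definition you actually write down does not produce a bounded Beltrami, and the fix is a different definition rather than further engineering of the collars.
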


We remark that this surgery operation only changes the dynamics of the
original map $f$ well inside the wandering orbit $(U_n)$, so the
dynamical properties of $f$ outside this set remain the same. In
contrast, for examples constructed using approximation theory there is
almost no control on the resulting entire function outside the
prescribed domains. For more information on the use of approximation
theory to construct functions with wandering domains we refer to the
seminal paper by Eremenko and Lyubich \cite{pathological}, as well as
to several recent interesting constructions; see e.g. \cite{befrs1,
  BocThaler, mrw1, fastescaping}.

Combining \thmCref with results in \cite{befrs1}, in
\Cref{sec:uniformlyWD} we prove the following realization statement,
generalizing \cite[Cor.~1.10]{associates_20}.

\begin{thmD}[\hypertarget{thm:D}{Realization of abstract wandering
  sequences}]
  Let $(b_n)$ be a sequence of uniformly hyperbolic Blaschke
  products. Then there exists a transcendental entire function $f$
  with an orbit of simply connected wandering domains $(U_n)$ and a
  sequence of conformal maps $(\theta_n: U_n \to \D)$ such that
  \[
    f\vert_{U_n} = \theta_{n+1}^{-1} \circ  b_n\circ \theta_n.	
  \]
\end{thmD}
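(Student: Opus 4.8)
The plan is to derive \thmDref from \thmCref by running the replacement surgery on a carefully chosen \emph{seed} map. From the hypothesis that $(b_n)$ is uniformly hyperbolic we extract the only two pieces of data that matter: the degree sequence $d_n\defeq\deg(b_n)$, which is uniformly bounded, say $d_n\le d$ for all $n$; we also take $d_n\ge 2$ for all $n$ (needed to invoke \thmCref, and in line with the notion of uniform hyperbolicity used here; see Section~\ref{sec:hyp_BP}). The aim is to produce a transcendental entire function $g_0$ carrying an orbit of \emph{simply connected} wandering domains $(V_n)$, $g_0\colon V_n\to V_{n+1}$, such that $\deg(g_0\vert_{V_n})=d_n$ for every $n$ and such that the sequence $(g_0\vert_{V_n})$ is itself uniformly hyperbolic. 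Granting such a seed, the hypotheses of \thmCref hold for the pair $g_0$ and $(b_n)$: indeed $2\le\deg(g_0\vert_{V_n})=\deg(b_n)$ for all $n$, and both $(g_0\vert_{V_n})$ and $(b_n)$ are uniformly hyperbolic. That theorem then supplies an entire function $g$ carrying an orbit of simply connected wandering domains $(\widetilde V_n)$ and, by part~(a), conformal maps $\theta_n\colon\widetilde V_n\to\D$ with $g\vert_{\widetilde V_n}=\theta_{n+1}^{-1}\circ b_n\circ\theta_n$. Setting $f\defeq g$ and $U_n\defeq\widetilde V_n$ is exactly the conclusion of \thmDref; here $f$ is transcendental because it possesses a wandering domain, and part~(b) of \thmCref additionally exhibits $f$ as a quasiconformal deformation of the seed $g_0$ off the wandering orbit, although we make no use of this.

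To build the seed $g_0$ I would appeal to the realization machinery of \cite{befrs1}, in the spirit of \cite[Cor.~1.10]{associates_20}, which produces transcendental entire functions carrying an orbit of simply connected wandering domains whose internal dynamics is prescribed from a suitable class of inner functions. The freedom we need is modest: it is enough to be able to prescribe the degree sequence $(d_n)$ and to choose the internal model itself to be \emph{manifestly} uniformly hyperbolic, so that uniform hyperbolicity of $(g_0\vert_{V_n})$ is automatic. For the latter the cleanest choice is the sequence of power maps $p_n(z)=z^{d_n}$: the only critical point of $p_n$ in $\D$ is $0$, the point $0$ is fixed by every $p_n$, so all critical points lie at hyperbolic distance $0$ from the orbit $(0)_{n\ge 0}$, and the degrees are bounded by $d$; hence $(p_n)$ is uniformly hyperbolic. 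One thus wants a seed $g_0$ with $g_0\vert_{V_n}$ conformally conjugate to $p_n$ for every $n$ — or, more weakly, a seed with $\deg(g_0\vert_{V_n})=d_n$ for which uniform hyperbolicity of $(g_0\vert_{V_n})$ can be read off directly from the geometry of the construction in \cite{befrs1}.

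The main obstacle lies entirely in this construction of the seed with all three properties simultaneously: (i) an honest orbit of simply connected wandering domains, (ii) the degree sequence $(d_n)$ matched \emph{exactly}, and (iii) uniform hyperbolicity of the induced inner maps. Property~(iii) comes essentially for free once one is free to choose the inner model, as above, and property~(i) is standard for approximation-theoretic constructions of the type in \cite{befrs1} (compare the seminal example of \cite{pathological}). The delicate point is (ii): engineering the prescribed \emph{variable} degree sequence — where \thmDref goes beyond \cite[Cor.~1.10]{associates_20} — while keeping the wandering domains simply connected and retaining enough quantitative control over the construction for uniform hyperbolicity of $(g_0\vert_{V_n})$ to persist. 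Once the seed is in hand, everything downstream is a formal consequence of \thmCref.
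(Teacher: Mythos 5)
Your overall strategy is exactly the one the paper follows: reduce \thmDref to \thmCref by exhibiting a seed entire function $g_0$ with an admissible orbit of simply connected wandering domains $(V_n)$, with $\deg(g_0\vert_{V_n})=\deg(b_n)$ for all $n$ and $(g_0\vert_{V_n})$ uniformly hyperbolic, and then replacing its internal dynamics with $(b_n)$. The downstream bookkeeping in your proposal (degree matching, both sequences uniformly hyperbolic, transcendentality from the existence of a wandering domain) is all correct.

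The gap is precisely the seed construction, which you flag but do not carry out. The paper supplies this in Lemma~\ref{lem:f_with_dn}: starting from a uniformly hyperbolic Blaschke sequence of degrees $d_n$ chosen to have $d_n-1$ \emph{distinct simple} critical points $c_n^k$ (which exist by Zakeri's theorem), one transplants them to a sequence of unit discs $D_n$ centered at $4n$, and applies \cite[Theorem~5.3]{befrs1} to produce an entire function $f$ with an orbit $(U_n)$ of simply connected wandering domains in which $f'(C_n^k)=0$ at the corresponding points $C_n^k=c_n^k+4n$, with $\deg(f\vert_{U_n})=d_n$, and with $U_n$ asymptotically equal to $D_n$. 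Uniform hyperbolicity of $(f\vert_{U_n})$ then follows from the fact that all $c_n^k$ lie in a fixed disc $\D_s$, together with the asymptotic shape control on $U_n$. Your alternative suggestion of using the power maps $p_n(z)=z^{d_n}$ as the inner model does not drop straight into this machinery: $p_n$ has a single critical point of multiplicity $d_n-1$, whereas \cite[Theorem~5.3]{befrs1}, as invoked in the paper, is used to prescribe vanishing of $f'$ at $d_n-1$ distinct points. (Your weaker fallback — only ask for matching degree and uniform hyperbolicity, leaving the internal model to the construction — is the correct route, and is what Lemma~\ref{lem:f_with_dn} delivers.) Since you correctly identify both the reduction and the remaining obstacle, the proposal is a faithful outline of the paper's argument; what is missing is the explicit realization lemma that closes it.
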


\begin{structure} \Cref{sec:prelim} has definitions, notation, and
  some preliminary results on holomorphic self-maps of the unit disk,
  while \Cref{sec:qcprelim} contains results on quasiconformal and
  quasisymmetric maps needed for our quasiconformal surgery.  In
  \Cref{sec:hyp_BP} we define the notion of uniformly hyperbolic
  Blaschke products and prove the main properties we will need.
  \Cref{sec:uniformlyWD} deals with uniformly hyperbolic wandering
  domains and contains the proofs of \thmCref and \thmDref.
  In \Cref{sec:go-relations} we discuss grand orbit relations and
  prove \thmAref.
  Finally, in \Cref{sec:coexistence} we study polynomials in relation
  to uniformly hyperbolic Blaschke products and give the proof of
  \thmBref.
\end{structure}

\subsection*{Acknowledgments}
We are grateful to Curtis McMullen for generously providing the idea
of mimicking polynomial dynamics in a sequence of Blaschke products in
order to obtain our example in \thmBref. We also thank Christian
Henriksen for helpful discussions. This material is partially
based upon work supported by the National Science Foundation under
Grant No.\ DMS-1928930 while the authors participated in a program
hosted by the Mathematical Sciences Research Institute in Berkeley,
California, during the Spring 2022 semester.

\section{Preliminaries}
\label{sec:prelim}
\subsection{Notation and basic definitions} We denote by $\C$ the
complex plane and by $\widehat{\C} = \C \cup \{ \infty \}$ the
extended complex plane. We denote the closure of a set $A$ (either in
$\C$ or in $\CC$) by $\overline{A}$. We write $A\subset B$ to indicate
that $A$ is a subset of $B$, including the possibility that $A=B$. In
cases where we want to stress that $A$ is a proper subset of $B$, we
use the notation $A \subsetneq B$ instead.  For sets $A, B\subset \C$,
we write $A \Subset B$ to indicate that $A$ is \emph{compactly
  contained} in $B$, i.e., $\overline{A}$ is compact and
$ \overline{A} \subset B$.  Given a set $A \subset \C$ and a constant
$c \in \C$, we write $A + c \defeq \{ z + c \colon z \in A \}$ for the
translate of the set $A$ by $c$.

For $r>0$, we denote by $\D_r$ and $\s_r$ the open disk and the circle
of radius $r$ centered at $0$, respectively. In the case $r=1$, we
write $\D \defeq \D_1$ and $\s \defeq \s_1$.  For $0<r<R$,
$\A_{r,R}\defeq \{ r<|z|<R \}$ is the round annulus with inner radius
$r$ and outer radius $R$. When $R=1$, we write $\A_r \defeq
\A_{r,1}$. We refer to the conformal modulus of a topological annulus
$A$ as $\Mod(A)$. Sequences of points, sets, and functions such as
$(x_n)$ are indexed by $n\geq 0$, unless otherwise specified. For a
simply connected domain $U\subset \C$, $d_U(\cdot, \cdot )$ denotes
the hyperbolic distance in $U$, while $\rho_U(\cdot)$ denotes the
density of the hyperbolic metric.

Given domains $U,V\subset \C$ and a holomorphic map $f\colon U\to V$,
we denote by $\Crit(f)$ the set of its critical points and by $S(f)$
the set of its singular (critical and asymptotic) values.  Moreover,
if $f$ is a proper map, we denote by $\deg(f)$ the topological degree
of $f$.

\begin{defn}
  \label{def:sequences}
  Let $U_n \subset \C$ and let $f_n\colon U_n\to U_{n+1}$ be
  holomorphic proper maps. The sequence $(f_n)$ has \emph{uniformly
    bounded degree} if there exists $d \in \N$ such that
  $\deg(f_n)\leq d<\infty$ for all $n\geq 0$. Moreover, an
  $(f_n)$-\emph{orbit} (or simply an \textit{orbit} when the context
  is clear) is a sequence of points $(z_n)$, $z_n\in U_n$, such that
  $f_n(z_n) = z_{n+1}$.
\end{defn}
 
The notions of topological and (quasi-)conformal equivalence are usually
defined for pairs of maps. In this paper, we extend this definition to
sequences of maps.
\begin{defn}\label{def:top_equiv} Let $U_n,V_n \subset \C$ and let
  $f_n\colon U_n\to U_{n+1}$ and $g_n\colon V_n\to V_{n+1}$ be holomorphic
  functions. We say that the sequences $(f_n)$ and $(g_n)$
  are \emph{topologically equivalent} if there exists a sequence of
  homeomorphisms $h_n\colon U_n\to V_n$ such that
  \begin{equation}
    \label{eq:conj}
    g_n = h_{n+1} \circ f_n \circ h_n^{-1} \quad \text{ for all }
    n \ge 0.
  \end{equation}
  If, in addition, there exists $K \ge 1$ such that all maps $h_n$ are
  $K$-quasiconformal, then we say that the sequences are
  \textit{($K$-)quasiconformally equivalent}. If all maps $h_n$ are
  conformal, the sequences are said to be \textit{conformally
    equivalent}.
\end{defn}

\begin{remark} Orbits and singular values are preserved under
  topological equivalence, that is, if $(f_n)$ and $(g_n)$ are
  sequences of maps as in Definition~\ref{def:top_equiv} that are
  topologically equivalent under a sequence of homeomorphisms $(h_n)$,
  then $(z_n)$ is an $(f_n)$-orbit if and only if $(h_n(z_n))$ is a
  $(g_n)$-orbit. Moreover, $h_n(\Crit(f_n))=\Crit(g_n)$ and
    $h_{n+1}(S(f_n))=S(g_n)$ for all $n\geq0$.
\end{remark}

If $f$ is an entire map with a wandering domain $U_0$, we will denote
by $U_{n+1}$ the Fatou component that contains $f(U_n)$, $n\geq 0$,
and say that $(U_n)$ is an \textit{orbit} of wandering domains. Note
that if $\deg(f\vert_{U_n})$ is finite, then $U_{n+1}=f(U_n)$, and $(f_n\defeq f\vert_{U_n})$ is a sequence as above.

\textit{Finite Blaschke products} are proper holomorphic self-maps of the unit disk. It is well-known
that they have a finite degree $d \ge 1$ and can be written as
\[
  b(z) =  e^{i\theta} \prod_{k=1}^{d} \frac{z-a_k}{1-\overline{a_k}z},
\]
where $\theta\in \R$ and $|a_k|<1$ for $k=1,\ldots,d$. We will frequently work with infinite sequences of Blaschke products
$(b_n)$, $n\geq 0$.

\subsection{Estimates for holomorphic self-maps of the unit disc}

For $\rho \in (0,1)$, let 
\[
  \F_\rho = \{ g: \D \to \D \text{ analytic,} \, g(0)=0, |g'(0)|=\rho \}.
\]

The following is a variation of the Schwarz Lemma. It follows from
\cite[Cor.~2.4]{befrs1}, but we include a direct proof for
completeness. 
\begin{lemma}[Uniform Schwarz Lemma]
  \label{lem:uniform-schwarz}
  Let $\rho, r \in (0,1)$. Then there exists $c \in (0,1)$,
  explicitly given by $c = \frac{\rho+r}{1+r \rho}$ such that $|g(z)|
  \le c|z|$ for all $g \in \F_\rho$ and all $|z| \le r$.
  
  In particular, if $\rho\leq r$, then
  $|g(z)| \le \frac{2r^2}{1+r^2}<r$.
\end{lemma}

 \begin{remark}
	The general statement without the explicit formula for $c$ can be
	proved using the Schwarz lemma and a normality argument. 
\end{remark}

\begin{proof}[Proof of Lemma \ref{lem:uniform-schwarz}]
Let $g \in \F_\rho$. By composing $g$ with a rotation, we may
  assume $g'(0) = \rho$. Define $h(z) = g(z)/z$, with $h(0) = g'(0)
  = \rho$. We have to show that $|h(z)| \le c = \frac{\rho+r}{1+r
    \rho}$ for $|z| \le r$.
  
  The function $h$ is an analytic map from $\D$ into itself, so by the
  Schwarz lemma it satisfies $d_{\mathbb{D}}(h(z), h(w)) \le
  d_\D(z,w)$ for all $z,w \in \D$. Fixing $|z| \le r$, we get that
  \[
    d_{\mathbb{D}}(\rho,h(z)) = d_{\mathbb{D}}(h(0),h(z)) \le
    d_{\mathbb{D}}(0,z) \le d_{\mathbb{D}}(0,r),
  \]
  and thus
  \[
    d_{\mathbb{D}}(0,h(z)) \le d_{\mathbb{D}}(0,\rho) +
    d_{\mathbb{D}}(\rho,h(z)) \le  d_\D(0,\rho) + d_\D(0,r).
  \]
  Since $d_\D(0,r) = d_\D(0,-r)$, and since the interval $(-1,1)$ is a
  hyperbolic geodesic containing $-r$, $0$, and $\rho$, we get that
  \[
    d_{\mathbb{D}}(0,\rho) + d_{\mathbb{D}}(0,r) = d_{\mathbb{D}}(-r,\rho).
  \]
  The fact that $T(w) \defeq \frac{w+r}{1+rw}$ is a hyperbolic
  isometry gives
  \[
    d_{\mathbb{D}}(-r,\rho) = d_{\mathbb{D}}(T(-r), T(\rho)) =
    d_{\mathbb{D}}\left( 0, \frac{\rho+r}{1+r\rho} \right).
  \]
  Combining all the inequalities and equalities above we get
  \[
    d_{\mathbb{D}}(0,h(z)) \le d_{\mathbb{D}}\left( 0,
      \frac{\rho+r}{1+r\rho} \right),
  \]
  so that
  \[
    |h(z)| \le \frac{\rho+r}{1+r\rho}.
  \]
  
  Finally, suppose that $0<\rho \leq r<1$. The function $\rho \mapsto
  c(\rho) = \frac{\rho + r}{1+\rho r}$ is increasing on $[0,1]$, so
  $c(\rho) \le c(r) = \frac{2r}{1+r^2}$ and the result follows
  directly.
\end{proof}

\begin{cor}
  \label{cor:uniform-schwarz-preimage}
  Assume that $0 < \rho \le r < 1$, and $g \in \F_\rho$. Then
  $g^{-1}(\overline{\D}_r)$ contains the disk $\overline{\D}_R$ with
  $R = \sqrt{\frac{r}{2-r}} \in (r,1)$.
\end{cor}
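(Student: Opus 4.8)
The plan is to deduce this from \Cref{lem:uniform-schwarz} by a contrapositive argument on the modulus. Fix $g \in \F_\rho$ with $0 < \rho \le r < 1$. We want to show that every $z$ with $|z| \le R$ satisfies $|g(z)| \le r$, where $R = \sqrt{r/(2-r)}$; equivalently, if $|g(z)| > r$ then $|z| > R$. So suppose $|g(z)| > r$. First I would apply the Uniform Schwarz Lemma with $|z|$ in place of $r$: since $g(0) = 0$ and $|g'(0)| = \rho$, the lemma gives $|g(z)| \le c(|z|)\,|z|$ where $c(t) = \frac{\rho + t}{1 + \rho t}$ is the constant from the lemma (applied with radius $t = |z|$, which is legitimate provided $|z| \in (0,1)$; the case $z = 0$ is trivial since then $g(z) = 0 \le r$). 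Using $\rho \le t$ we have the cruder bound $c(t) \le \frac{2t}{1+t^2}$, hence
\[
  r < |g(z)| \le \frac{2|z|}{1+|z|^2}\,|z| = \frac{2|z|^2}{1+|z|^2}.
\]

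Now the remaining step is purely a computation with the function $t \mapsto \frac{2t^2}{1+t^2}$ on $(0,1)$, which is strictly increasing. The inequality $r < \frac{2s}{1+s}$ (writing $s = |z|^2$) rearranges to $r(1+s) < 2s$, i.e. $r < (2-r)s$, i.e. $s > \frac{r}{2-r}$, i.e. $|z|^2 > \frac{r}{2-r}$, i.e. $|z| > R$ with $R = \sqrt{r/(2-r)}$. This is exactly what we wanted. To see $R \in (r,1)$: since $0 < r < 1$ we have $2 - r > 1$ so $\frac{r}{2-r} < r < 1$, giving $R < \sqrt{r} < 1$; and $R > r$ is equivalent to $\frac{r}{2-r} > r^2$, i.e. $1 > r(2-r) = 2r - r^2 = 1-(1-r)^2$, which holds strictly for $r \in (0,1)$. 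Thus $R \in (r,1)$.

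Finally I would phrase the conclusion in the form stated: we have shown that $|z| \le R$ implies $|g(z)| \le r$, i.e. $\overline{\D}_R \subset g^{-1}(\overline{\D}_r)$. (One should note that $g^{-1}(\overline{\D}_r)$ here means the preimage as a subset of $\D$; since $g$ is a self-map of $\D$ this is unambiguous, and $\overline{\D}_R \subset \D$ because $R < 1$.) There is no real obstacle here — the only thing to be slightly careful about is invoking \Cref{lem:uniform-schwarz} with the variable radius $|z|$ rather than a fixed $r$, and checking that the hypothesis $\rho \le |z|$ needed for the simplified constant $\frac{2t}{1+t^2}$ is available. That hypothesis is not automatic, so a cleaner route is to avoid it: keep the sharp constant and argue $r < |g(z)| \le c(|z|)|z| = \frac{(\rho + |z|)|z|}{1 + \rho|z|}$, then note that for fixed $|z|$ this right-hand side is increasing in $\rho$, so it is at most its value at $\rho = |z|$, namely $\frac{2|z|^2}{1+|z|^2}$, recovering the same inequality without needing $\rho \le |z|$. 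Either way the computation above then finishes the proof.
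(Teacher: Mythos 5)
Your contrapositive argument is essentially a rearrangement of the paper's direct proof: the paper observes that $\frac{2R^2}{1+R^2} = r$ exactly, applies the ``In particular'' clause of \Cref{lem:uniform-schwarz} with $R$ in place of $r$, and simply checks $\rho \le R$ (which holds since $\rho \le r < R$). Your main computation is correct, and either route works, but the direct application is shorter and avoids the case analysis that is causing you trouble at the end.

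The one genuine flaw is in your proposed ``cleaner route.'' You claim that bounding $c(|z|)\,|z| = \frac{(\rho + |z|)\,|z|}{1 + \rho\,|z|}$ by its value at $\rho = |z|$ avoids needing $\rho \le |z|$. It does not: since this expression is increasing in $\rho$, plugging in $\rho = |z|$ only gives an \emph{upper} bound precisely when $\rho \le |z|$, the very hypothesis you were trying to sidestep. Fortunately, that hypothesis is in fact automatic under the contrapositive assumption $|g(z)| > r$: the plain Schwarz lemma gives $|g(z)| \le |z|$, hence $|z| \ge |g(z)| > r \ge \rho$. With that one-line observation, your first argument (using the crude bound $c(t) \le \frac{2t}{1+t^2}$ valid for $\rho \le t$) is already complete and the ``cleaner route'' is unnecessary. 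Either supply that observation, or — more in the spirit of the paper — skip the contrapositive entirely and apply the lemma at radius $R$ directly, using $\frac{2R^2}{1+R^2} = r$ and $\rho \le r < R$.
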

\begin{proof}
  Let $R = \sqrt{\frac{r}{2-r}}$. Then $0  < R < 1$, and
  $\frac{2R^2}{1+R^2} = r$, so applying
  Lemma~\ref{lem:uniform-schwarz} with $R$ instead of $r$ gives
  that $|g(z)| \le r < R$ whenever $|z| \le R$ and 
   $\rho \le R$, which is satisfied because $\rho \le r$ and $r
  < R$.
\end{proof}

\section{Quasiconformal and quasisymmetric maps}
\label{sec:qcprelim}
In the proof of \Cref{thm:gluingmap}, which is the basis for \thmCref,
we will require some results on quasiconformal and quasisymmetric
maps.  For a general reference, see \cite{ahl,LehVir}, and for results
in the context of holomorphic dynamics, see \cite{BraFag14}.

Let $X$ and $Y$ be connected subsets of $\C$, and let $f:X \to Y$ be a
homeomorphism.  We say that $f$ is \emph{$H$-quasisymmetric} if
$H \ge 1$ is a constant such that for $x_1, x_2, x_3 \in X$
\[
  |f(x_1)-f(x_2)| \le H |f(x_2)-f(x_3)| \qquad \text{whenever }
  |x_1-x_2| \le |x_2-x_3|.
\]
In the general theory of quasisymmetric maps on metric spaces,
homeomorphisms satisfying this condition are usually called ``weakly
$H$-quasisymmetric''. However, by
\cite[Cor.~10.22]{heinonenLecturesAnalysisMetric2001}, for every
weakly $H$-quasisymmetric homeomorphism between connected subsets of
$\C$ there exists a homeomorphism $\eta:[0,\infty) \to [0,\infty)$
depending only on $H$ such that $f$ is strongly $\eta$-quasisymmetric,
i.e.,
\[
  |f(x_1)-f(x_2)| \le \eta(t) |f(x_2)-f(x_3)| \qquad \text{whenever }
  |x_1-x_2| \le t |x_2-x_3|.
\]
With this stronger condition, it is immediate that the composition
$f_2 \circ f_1$ of strongly $\eta_k$-quasisymmetric homeomorphisms
$f_k$, $k=1,2$,  is strongly $\eta_2 \circ \eta_1$-quasisymmetric, which in turn
implies that the composition $f_2 \circ f_1$ of $H_k$-quasisymmetric
homeomorphisms $f_k$ is $H$-quasisymmetric, with $H$ only depending
on $H_1$ and $H_2$. We say that $f:X \to \C$ is an
\emph{$H$-quasisymmetric embedding} if it is an $H$-quasisymmetric
homeomorphism onto its image $f(X)$.

By \cite[Theorem~11.14]{heinonenLecturesAnalysisMetric2001}, every
$K$-quasiconformal map $f: \C \to \C$ is $H$-quasisymmetric with $H$
only depending on $K$, and every $H$-quasisymmetric $f: \C \to \C$ is
$K$-quasiconformal, with $K$ depending only on $H$. (Note that
conformal maps from a bounded onto an unbounded domain are never
quasisymmetric, so the assumption that $f$ is defined in the whole
plane is important here.)

Let $\gamma$ be a Jordan curve in $\C$. We say that $\gamma$ is a
\emph{$K$-quasicircle} if it is the image of the unit circle under a
$K$-quasiconformal self-map of $\C$. Ahlfors showed that there is a
simple geometric characterization of quasicircles \cite[Ch.~IV]{ahl}.
For distinct points $z,w \in \gamma$, let $\diam_\gamma(z,w)$ denote the
minimum of the diameters of the two components of
$\gamma \setminus \{ z,w \}$. We say that $\gamma$ is of
\emph{$M$-bounded turning} if for all distinct $z,w \in \gamma$ we
have $\diam_\gamma(z,w) \le M |z-w|$. The result of Ahlfors is that every
$K$-quasicircle is of $M$-bounded turning, with $M=M(K)$, and that
every Jordan curve of $M$-bounded turning is a $K$-quasicircle, with
$K=K(M)$.

Ahlfors and Beurling gave an explicit formula showing that every
$H$-quasisymmetric mapping $f: \R \to \R$ extends to a
$K$-quasiconformal map $F: \C \to \C$, with $K$ depending only on $H$,
see \cite[Sec.~IV]{ahl}. Combining this with the characterization of
quasicircles given above, Tukia generalized
this result to show the following:
\begin{thm}[Qc extension of quasisymmetric embeddings of circles
  \cite{tukiaExtensionQuasisymmetricLipschitz1981}]
  \label{thm:qc-extension-of-circle-embeddings}
  Let $f:S \to \C$ be an $H$-quasisymmetric embedding of a circle or a
  line $S \subset \C$. Then $f$ extends to a $K$-quasiconformal map
  $F: \C \to \C$, with $K$ depending only on $H$. In particular, the
  image $f(S)$ is a $K$-quasicircle.
\end{thm}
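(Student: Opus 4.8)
The plan is to deduce the theorem from the two classical facts recalled above: Ahlfors' geometric characterization of quasicircles \cite[Ch.~IV]{ahl}, and the Beurling--Ahlfors extension of quasisymmetric self-maps of $\R$ (together with its standard analogue for self-maps of the circle, see also \cite{LehVir, BraFag14}). The crucial point is that $\gamma \defeq f(S)$ will be straightened to a round circle by the quasiconformal map supplied by Ahlfors' theorem --- \emph{not} by a Möbius transformation, which is not uniformly quasisymmetric, exactly the warning recorded above --- while round circles and lines are normalized only by Euclidean similarities, which are $1$-quasisymmetric. Replacing $f$ by $f \circ B$ for a suitable similarity $B$ (which, being $1$-quasisymmetric, leaves all constants unchanged), I may assume $S = \s$ when $S$ is a circle, resp.\ $S = \R$ when $S$ is a line; I treat the circle case, the case of a line being entirely analogous upon working in $\CC$ with the chordal metric, once one checks from quasisymmetry that $f(S) \cup \{\infty\}$ is a Jordan curve in $\CC$.

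\emph{Bounded turning.} The first and only genuinely non-formal step is to show that $\gamma$ has $M$-bounded turning with $M = M(H)$. By \cite[Cor.~10.22]{heinonenLecturesAnalysisMetric2001}, $f$ is strongly $\eta$-quasisymmetric with $\eta = \eta_H$ depending only on $H$. Given distinct points $z = f(a)$ and $w = f(b)$ on $\gamma$, let $\alpha$ be the shorter of the two subarcs of $\s$ joining $a$ and $b$. Elementary chord geometry of the circle gives $|a - c| \le |a - b|$ for every $c \in \alpha$, and applying the strong quasisymmetry inequality to the triple $(c, a, b)$ yields $|f(c) - f(a)| \le \eta_H(1)\,|z - w|$; hence $\diam f(\alpha) \le 2\eta_H(1)\,|z - w|$. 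Since $f(\alpha)$ is one of the two components of $\gamma \setminus \{z, w\}$, this gives $\diam_\gamma(z, w) \le 2\eta_H(1)\,|z - w|$, so $\gamma$ has $M$-bounded turning with $M = 2\eta_H(1)$. By Ahlfors' characterization, $\gamma$ is therefore a $K_1$-quasicircle with $K_1 = K_1(M) = K_1(H)$: there is a $K_1$-quasiconformal self-map $g$ of $\C$ --- which fixes $\infty$, being a self-map of $\C$ --- such that $g(\s) = \gamma$.

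\emph{Extension.} Put $\psi \defeq g^{-1} \circ f \colon \s \to \s$. Since $g$ is quasiconformal on all of $\C$, it is globally $\eta_{K_1}$-quasisymmetric by \cite[Theorem~11.14]{heinonenLecturesAnalysisMetric2001}, and hence so is $g^{-1}$; restricting $g^{-1}$ to $\gamma$ and composing with the $\eta_H$-quasisymmetric map $f$, I obtain that $\psi$ is $\eta'$-quasisymmetric with $\eta' = \eta'(H)$. By the circle analogue of the Beurling--Ahlfors extension theorem (\cite{ahl, LehVir, BraFag14}), $\psi$ extends to a $K_2$-quasiconformal self-map $\Psi$ of $\CC$ with $\Psi(\overline{\D}) = \overline{\D}$ and $\Psi(\infty) = \infty$, where $K_2 = K_2(\eta') = K_2(H)$; the normalization $\Psi(\infty) = \infty$ is available with a controlled constant because an $\eta'$-quasisymmetric circle homeomorphism cannot distort arclength ratios too wildly, so the conformally natural (Douady--Earle) extension sends the centre of $\D$ to a point whose distance from $\partial\D$ is bounded below in terms of $\eta'$ alone. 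Then $F \defeq g \circ \Psi$ is a $K$-quasiconformal self-map of $\CC$ fixing $\infty$, with $K = K_1 K_2 = K(H)$; its restriction to $\C$ is a $K$-quasiconformal map $F \colon \C \to \C$ with $F|_S = g \circ \psi = f$ on $S$, so $F$ is the desired extension, and $F(S) = g(\Psi(\s)) = g(\s) = \gamma = f(S)$ exhibits $f(S)$ as the image of the unit circle under a $K$-quasiconformal self-map of $\C$, i.e.\ as a $K$-quasicircle.

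The conceptual heart --- and the only step that is not bookkeeping --- is the bounded-turning estimate, that is, extracting a turning constant that depends on $H$ alone; the two extension results of Beurling--Ahlfors type are used as black boxes. The main technical discipline to maintain throughout is that every normalization be carried out with Euclidean similarities and with the straightening map $g$ furnished by Ahlfors' theorem, and never with a Möbius transformation.
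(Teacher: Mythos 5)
Your argument is correct, and it follows precisely the route the paper attributes to Tukia: use strong quasisymmetry to derive a bounded-turning constant for $\gamma = f(S)$ depending only on $H$, straighten $\gamma$ via Ahlfors' characterization, and finish by applying a Beurling--Ahlfors-type extension to the resulting quasisymmetric self-map of $\s$. Since the paper simply cites \cite{tukiaExtensionQuasisymmetricLipschitz1981} (remarking only that Tukia's real-line statement extends to circles), you are in effect writing out the proof the paper defers. One small point: the aside about the Douady--Earle extension and the image of the centre of $\D$ is unnecessary and somewhat confused. Once $\psi$ has been extended to a $K_2$-quasiconformal self-map of $\overline{\D}$ with $K_2=K_2(\eta')$ --- which is the standard quantitative extension statement for quasisymmetric circle maps --- extending by reflection, i.e.\ setting $\Psi(z) = 1/\overline{\Psi(1/\bar z)}$ for $|z|>1$, yields a $K_2$-quasiconformal self-map of $\CC$ that fixes $\infty$ automatically; no estimate on where interior points land is required.
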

\begin{remark}
  In Tukia's paper, the result is stated for quasisymmetric embeddings
  of the real line. However, it is not hard to show that this result
  is true for arbitrary circles and lines (and Tukia's arguments in
  the proof show it as well.)
\end{remark}

We say that two domains $U$ and $V$ are \emph{$K$-quasiconformally
  equivalent} if there exists a $K$-quasiconformal isomorphism
$\phi: U\to V$.

\begin{thm}[{Extension of qc maps on compact sets \cite[Theorem~8.1,
    p. 96]{LehVir}}]
  \label{thm:lv-qc-extension}
  Let $\psi:D \to D'$ be a $K$-quasiconformal homeomorphism between
  domains $D, D' \subset \C$, and let $E \subset D$ be
  compact. Then, there exists $K'=K'(K,D,E)$ and a $K'$-quasiconformal
  map $\Psi: \C \to \C$ such that $\psi=\Psi$ on $E$.
\end{thm}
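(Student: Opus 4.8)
The plan is to replace the behaviour of $\psi$ near $E$ by the normalized solution of a globally defined Beltrami equation, so that the remaining ``correction'' map becomes conformal near $E$, and then to spread that correction map over the whole plane using Tukia's extension theorem (\Cref{thm:qc-extension-of-circle-embeddings}). Throughout, every estimate must be tracked so that the final dilatation depends only on $K$, $D$ and $E$.

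\emph{Step 1 (a nice neighbourhood of $E$).} Fixing a sufficiently fine square grid, let $P_1$ be the union of the closed grid squares whose closure meets $E$, and let $P_0$ be the union of the closed grid squares whose closure meets $P_1$. For a fine enough grid we get $E\subset\interior P_1$, $P_1\Subset\interior P_0$ and $P_0\subset D$, and $\partial P_0,\partial P_1$ are finite unions of disjoint rectilinear Jordan curves; since all interior angles are $\pi/2$ or $3\pi/2$, these are quasicircles, with number and bounded-turning constants depending only on $D$ and $E$. Write $G\defeq\interior P_0$. \emph{Step 2 (absorbing $\psi$).} Let $\mu$ be the complex dilatation of $\psi$ on $D$, so $\|\mu\|_\infty\le (K-1)/(K+1)$, and set $\widehat\mu\defeq\mu\cdot\mathbf 1_{G}$ on $\C$. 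By the measurable Riemann mapping theorem there is a $K$-quasiconformal homeomorphism $\Phi\colon\C\to\C$ with complex dilatation $\widehat\mu$. On $G$ the maps $\psi$ and $\Phi$ have the same dilatation, so $h\defeq\psi\circ\Phi^{-1}$ restricts to a conformal isomorphism of $\Phi(G)$ onto $\psi(G)$. The key observation is that if $h$ extends from the compact set $\Phi(\overline{P_1})$ to a quasiconformal self-map $H$ of $\C$, then $\Psi\defeq H\circ\Phi$ is a quasiconformal self-map of $\C$ with $\Psi=\psi\circ\Phi^{-1}\circ\Phi=\psi$ on $\overline{P_1}\supset E$, and its dilatation is at most $K$ times that of $H$.

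\emph{Step 3 (extending the conformal correction).} Since $\Phi$ is $K$-quasiconformal on $\C$ (hence $\eta_K$-quasisymmetric), the curves $\Phi(\partial P_0)$ and $\Phi(\partial P_1)$ are again finite unions of quasicircles with constants controlled by $K,D,E$, and $\Phi(\overline{P_1})$ sits inside $\Phi(G)$ with relative separation bounded below in terms of $K,D,E$. As $h$ is conformal on $\Phi(G)$, the Koebe distortion theorem shows that the restriction of $h$ to each quasicircle component $\Gamma$ of $\Phi(\partial P_1)$ is an $\eta$-quasisymmetric embedding with $\eta=\eta(K,D,E)$. Writing $\Gamma=\Phi_0(\s)$ for a quasiconformal self-map $\Phi_0$ of $\C$ whose constant depends only on $K,D,E$, the composition $h|_\Gamma\circ\Phi_0$ is a quasisymmetric embedding of $\s$ with constant depending only on $K,D,E$, so by \Cref{thm:qc-extension-of-circle-embeddings} it extends to a quasiconformal self-map of $\C$, and conjugating back gives a quasiconformal self-map of $\C$ agreeing with $h$ on $\Gamma$. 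Filling in the complementary components of $\Phi(\overline{P_1})$ one at a time — extending boundary values across one quasicircle at a time with these maps, interpolating quasiconformally in the annular regions that arise, and gluing across the $\Gamma$'s using that a homeomorphism which is quasiconformal off a quasicircle is quasiconformal (reducing to the removability of a line) — produces the desired $H$. Since the number of complementary components is bounded in terms of $D,E$ and each step raises the dilatation by a controlled amount, $H$, and hence $\Psi$, is $K'$-quasiconformal with $K'=K'(K,D,E)$.

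The main obstacle is Step 3: extending the conformal correction off the finitely connected (and possibly disconnected) region $\Phi(\overline{P_1})$ with a bound depending \emph{only} on $K,D,E$ and not on $\psi$ itself. The delicate point is that $|h'|$ is not controlled by $K$ alone, so one cannot bound the quasisymmetry constant of $h$ along $\Phi(\partial P_1)$ directly; Koebe distortion is precisely what converts the purely geometric lower bound on the separating modulus into such a bound, after which Tukia's theorem and the quasicircle-gluing lemma finish the argument.
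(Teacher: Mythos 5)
The paper does not prove this statement: it is cited verbatim from Lehto--Virtanen \cite[Theorem~8.1, p.~96]{LehVir}, so there is no internal proof to compare against. Judged on its own merits, your strategy --- pave $E$ by a grid neighbourhood $P_1 \Subset P_0 \subset D$, absorb the dilatation of $\psi$ on $G = \interior P_0$ via the measurable Riemann mapping theorem so that the remaining factor $h = \psi\circ\Phi^{-1}$ is \emph{conformal} near $\Phi(\overline{P_1})$, use Koebe distortion to bound the quasisymmetry of $h$ on the boundary quasicircles, and extend by Tukia plus quasicircle removability --- is a sensible and largely correct route, and Steps 1--2 are clean.

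The genuine gap is exactly where you flag ``the main obstacle'': Step 3 is a sketch, not a proof, and the sketched gluing does not go through as stated. When a complementary component $\Omega$ of $\Phi(\overline{P_1})$ has several boundary quasicircles $\Gamma_{i_1},\dots,\Gamma_{i_k}$ (which is unavoidable in general, e.g.\ $D$ an annulus with $E$ separating its boundary components, or $E$ with several components), the Tukia extensions of $h|_{\Gamma_{i_1}},\dots,h|_{\Gamma_{i_k}}$ are \emph{different} global maps and do not agree anywhere; ``extending across one quasicircle at a time'' produces boundary data on both sides of each auxiliary separating curve that must then be matched. Your phrase ``interpolating quasiconformally in the annular regions that arise'' asks for a uniform annulus-interpolation lemma, but you cannot invoke \Cref{lem:qc-interpolation} here, since its proof (via \Cref{cor:conformal-qs} and \Cref{lem:qc-extension}) rests on precisely the statement you are proving --- that would be circular. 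You would also need to verify that the image annuli bounded by the $h(\Gamma_{i_j})$ have moduli comparable to the domain annuli with constants depending only on $K,D,E$, and that the orientation/nesting of the $h(\Gamma_{i_j})$ matches that of the $\Gamma_{i_j}$ so that the glued map is a homeomorphism; neither is addressed. These points are fixable, but as written the proposal reduces the theorem to an unproved gluing claim of comparable difficulty.
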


One immediate consequence of this theorem is that an embedding of a
circle which extends to a conformal map of an annular neighborhood is
a quasisymmetric embedding, in a quantitative way. The following
corollary is the precise version of this statement.
\begin{cor}[Quasisymmetric image of a circle]
  \label{cor:conformal-qs}
  Let $r>0$ and $0<t<1$, and let $\psi: \A_{rt,r/t} \to \C$ be a conformal
  embedding. Then $\psi|_{\s_r}: \s_r \to \C$ is an $H$-quasisymmetric
  embedding, where $H$ depends only on $t$.
\end{cor}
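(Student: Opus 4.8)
The plan is to reduce everything to a fixed, normalized situation and then invoke \Cref{thm:lv-qc-extension} together with the quantitative characterization of quasicircles. First I would normalize: by pre-composing with the dilation $z \mapsto z/r$ we may assume $r = 1$, since dilations are conformal and do not affect quasisymmetry constants; so it suffices to show that a conformal embedding $\psi : \A_{t, 1/t} \to \C$ restricts on $\s = \s_1$ to an $H$-quasisymmetric embedding with $H = H(t)$. (One should also note $\s_1 \Subset \A_{t,1/t}$, which is where $0 < t < 1$ is used.)

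The key step is to remove the dependence on the particular map $\psi$. The point is that the \emph{domain} $\A_{t,1/t}$ and the \emph{compact set} $\s_1 \subset \A_{t,1/t}$ are fixed once $t$ is fixed, and $\psi$ is conformal, hence $1$-quasiconformal. So \Cref{thm:lv-qc-extension}, applied with $D = D' $ replaced appropriately (we can take $D = \A_{t,1/t}$, $D' = \psi(\A_{t,1/t})$, $K = 1$, and $E = \s_1$), yields a constant $K' = K'(1, \A_{t,1/t}, \s_1) = K'(t)$ and a $K'$-quasiconformal map $\Psi : \C \to \C$ with $\Psi = \psi$ on $\s_1$. In particular $\Psi(\s_1) = \psi(\s_1)$, so $\psi(\s_1)$ is a $K'(t)$-quasicircle. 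Then I would invoke the discussion preceding \Cref{thm:qc-extension-of-circle-embeddings}: a $K'$-quasiconformal self-map of $\C$ is $H$-quasisymmetric with $H = H(K')$, so $\Psi|_{\s_1} = \psi|_{\s_1}$ is an $H$-quasisymmetric embedding with $H = H(K'(t)) = H(t)$, as desired. Undoing the normalization gives the statement for general $r$.

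There is a small bookkeeping subtlety to watch: \Cref{thm:lv-qc-extension} as stated produces a global $K'$-quasiconformal map agreeing with $\psi$ on the compact set $E$, and its constant depends on $K$, $D$, and $E$; here $K = 1$ and the pair $(D, E) = (\A_{t,1/t}, \s_1)$ is determined by $t$, so $K'$ genuinely depends only on $t$ after normalization. One must make sure the normalization is legitimate, i.e., that scaling the source by $1/r$ turns $\psi : \A_{rt, r/t} \to \C$ into a conformal embedding of $\A_{t,1/t}$ and turns $\s_r$ into $\s_1$; this is immediate since $z \mapsto rz$ maps $\A_{t,1/t}$ conformally onto $\A_{rt,r/t}$ and $\s_1$ onto $\s_r$, and composition with a conformal (indeed affine) bijection changes neither quasiconformality nor quasisymmetry constants.

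I do not expect a serious obstacle here; the proof is essentially an assembly of \Cref{thm:lv-qc-extension}, the definition of quasicircle, and the equivalence between quasiconformality and quasisymmetry for global maps, all of which are available in the excerpt. The only mildly delicate point is the uniformity claim — convincing the reader that $H$ depends on $t$ \emph{alone} and not on $\psi$ — which is handled precisely by the reduction to the fixed pair $(\A_{t,1/t}, \s_1)$ before applying the extension theorem, so that every constant introduced along the way is a function of $t$ only.
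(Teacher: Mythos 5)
Your proof is correct and is essentially the same as the paper's: both rescale to $r=1$, apply Theorem~\ref{thm:lv-qc-extension} with $(D,E)=(\A_{t,1/t},\s_1)$ and $K=1$ to get a global $K'(t)$-quasiconformal extension of $\psi|_{\s_1}$, and then conclude via the global quasiconformal-implies-quasisymmetric equivalence cited before Theorem~\ref{thm:qc-extension-of-circle-embeddings}. (Your aside that $\psi(\s_1)$ is a quasicircle is true but not needed.)
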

\begin{proof}
  By rescaling the annulus, we may assume that $r=1$. 
  \Cref{thm:lv-qc-extension} implies that $\psi|_\s$ has a
  $K$-quasiconformal extension to the plane, where $K$ depends only on
  $t$. This extension is then $H$-quasisymmetric, with $H$ again
  depending only on $t$, establishing the claim.
\end{proof}

We will also need the following interpolation result, which is a
quantitative version of \cite[Proposition~2.30(b)]{BraFag14}.

\begin{thm}[Interpolation between boundary maps of annuli]
  \label{lem:qc-interpolation}
  Let $K_0 \ge 1$ be a constant, let $\A_{r,R}$ be a round annulus of
  modulus $M = \frac1{2\pi} \log \frac{R}{r}$, and let
  $A \subset \C$ be a topological annulus whose inner and outer
  boundary are Jordan curves $\gamma_i$ and $\gamma_o$, respectively,
  with $K_0^{-1}M \le \Mod A \le K_0 M$. Furthermore, assume that
  $\phi_i$ and $\phi_o$ are orientation-preserving $H$-quasisymmetric
  homeomorphisms from the inner and outer boundary circles of
  $\A_{r,R}$ to $\gamma_i$ and $\gamma_o$, respectively.  Then there
  exists a homeomorphism 
  $\phi: \overline{\A_{r,R}} \longrightarrow \overline{A}$ which is  $K$-quasiconformal  in $\A_{r,R}$ with $K=K(H,K_0,M)$, and coincides
  with $\phi_i$ and $\phi_o$ on the respective inner and outer
  boundaries of $\A_{r,R}$.
\end{thm}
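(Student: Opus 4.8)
The plan is to reduce the arbitrary topological annulus $A$ to a round one by uniformization, and then to interpolate between two quasisymmetric \emph{circle} maps on a round annulus by an explicit construction in logarithmic coordinates. First, by \Cref{thm:qc-extension-of-circle-embeddings} I would extend $\phi_i$ and $\phi_o$ to $K_1$-quasiconformal self-maps of $\C$ with $K_1=K_1(H)$; in particular $\gamma_i$ and $\gamma_o$ are $K_1$-quasicircles. Next, let $\chi\colon A\to\A_{1,\sigma}$ be a conformal isomorphism onto a round annulus, $\sigma\defeq e^{2\pi\Mod A}$, normalized so that $\chi$ sends $\gamma_i$ to $\s_1$; by Carathéodory's theorem $\chi$ extends to a homeomorphism $\overline A\to\overline{\A_{1,\sigma}}$. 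The key point is that $\chi|_{\gamma_i}$ and $\chi|_{\gamma_o}$ are $H_2$-quasisymmetric with $H_2=H_2(H,K_0,M)$. To see this for $\gamma_i$, put $c_0\defeq\min\{2\pi\Mod A,\log 2\}$, so that $c_0\ge\min\{2\pi M/K_0,\log 2\}$, and note that $\chi^{-1}$ is conformal on the one-sided collar $\A_{1,e^{c_0}}\subset\A_{1,\sigma}$. I would extend $\chi^{-1}$ across $\s_1$ by setting $\beta(z)\defeq R_{\gamma_i}\!\left(\chi^{-1}(1/\overline z)\right)$ for $z\in\A_{e^{-c_0},1}$, where $R_{\gamma_i}$ denotes reflection in the $K_1$-quasicircle $\gamma_i$ (a $K_1^2$-quasiconformal orientation-reversing involution fixing $\gamma_i$ pointwise and swapping its complementary components). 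Since $R_{\gamma_i}$ fixes $\gamma_i$, the two definitions agree on $\s_1$ and glue to a $K_1^2$-quasiconformal embedding $\beta$ of the two-sided collar $\A_{e^{-c_0},e^{c_0}}$ onto a neighborhood of $\gamma_i$. A quasiconformal analogue of \Cref{cor:conformal-qs}---obtained by replacing its conformal embedding with a quasiconformal one and invoking \Cref{thm:lv-qc-extension} in the very same way---then shows that $\beta|_{\s_1}=\chi^{-1}|_{\s_1}$ is quasisymmetric with constant depending only on $K_1$ and $c_0$, hence only on $H$, $K_0$ and $M$; inverting gives the assertion for $\chi|_{\gamma_i}$, and the same argument handles $\gamma_o$. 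Composing, $g_i\defeq\chi\circ\phi_i\colon\s_r\to\s_1$ and $g_o\defeq\chi\circ\phi_o\colon\s_R\to\s_\sigma$ are orientation-preserving $H_3$-quasisymmetric circle homeomorphisms with $H_3=H_3(H,K_0,M)$.

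It then remains to build a homeomorphism $G\colon\overline{\A_{r,R}}\to\overline{\A_{1,\sigma}}$ that is $K$-quasiconformal in $\A_{r,R}$ with $K=K(H_3,K_0,M)$ and restricts to $g_i$ on $\s_r$ and $g_o$ on $\s_R$; then $\phi\defeq\chi^{-1}\circ G$ is the required map, since $\chi^{-1}$ is conformal, so $\phi$ is a homeomorphism of the closed annuli, inherits the constant $K$, and satisfies $\phi|_{\s_r}=\phi_i$, $\phi|_{\s_R}=\phi_o$. To construct $G$ I would pass to logarithmic coordinates, identifying $\A_{r,R}$ and $\A_{1,\sigma}$ with the cylinders $\{\log r\le\Rea w\le\log R\}/2\pi i\Z$ and $\{0\le\Rea w\le\log\sigma\}/2\pi i\Z$; lifting, $g_i$ and $g_o$ become increasing $H_3$-quasisymmetric homeomorphisms $\tilde g_i,\tilde g_o$ of $\R$ commuting with $y\mapsto y+2\pi$, and after composing the two annuli with suitable rotations one may assume $\tilde g_i(0)=0$ and $\tilde g_o(0)\in[0,2\pi)$, so that $\tilde g_o-\tilde g_i$ is $2\pi$-periodic and bounded. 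Then I would set $G(x+iy)\defeq u(x)+i\,v(x,y)$, where $u$ is the increasing affine bijection of $[\log r,\log R]$ onto $[0,\log\sigma]$ and $v(x,\cdot)\defeq(1-\lambda(x))\,\tilde g_i+\lambda(x)\,\tilde g_o$ with $\lambda$ affine, $\lambda(\log r)=0$, $\lambda(\log R)=1$, and with $\tilde g_i,\tilde g_o$ replaced by Beurling--Ahlfors-type mollifications so that $v$ is absolutely continuous with controlled partial derivatives. Because $v(x,y+2\pi)=v(x,y)+2\pi$, this $G$ descends to a well-defined orientation-preserving homeomorphism of the cylinders with the prescribed boundary values, and computing its Beltrami coefficient (schematically $|\mu_G|^2=1-\tfrac{4u'v_y}{(u'+v_y)^2+v_x^2}<1$ for the pointwise-linear model, and analogously for the mollified one) shows its dilatation stays bounded away from $1$: here $u'=\log\sigma/(\log R-\log r)=\Mod A/M\in[K_0^{-1},K_0]$ and $v_y$ lies between constants depending only on $H_3$ and $K_0$, while $|v_x|\le C(H_3)/M$, the factor $1/M$ arising from $\lambda'=1/(2\pi M)$ times the bounded shear $\tilde g_o-\tilde g_i$. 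Hence $K=K(H_3,K_0,M)=K(H,K_0,M)$, as required.

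I expect the main difficulty to lie in the first step: controlling, purely in terms of $H$, $K_0$ and $M$, the quasisymmetry constant of the boundary restriction of the uniformizing map $\chi$ on the \emph{nonsmooth} curves $\gamma_i$ and $\gamma_o$. This is exactly where the modulus hypotheses are used: the lower bound $\Mod A\ge K_0^{-1}M$ guarantees a two-sided collar of definite logarithmic width $\gtrsim M/K_0$ around each boundary circle in which to run the reflection argument, while the upper bound $\Mod A\le K_0 M$ keeps the affine stretch $u'$ bounded in the interpolation; this step, together with the thinness of the annulus as $M\to0$, is also what forces $M$ to appear in the final constant. By contrast, the logarithmic-coordinate interpolation of the second step is classical and essentially a quantitative form of \cite[Prop.~2.30(b)]{BraFag14}; the only mild care needed there is the mollification, ensuring that the convex combination of two merely quasisymmetric boundary maps is regular enough for the dilatation estimate.
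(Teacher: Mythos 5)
Your proof is a genuinely different route from the paper's, and the overall structure is sound, but one step needs more care than the sketch gives it.

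The paper does not uniformize $A$. Instead it picks \emph{any} $K_0$-quasiconformal isomorphism $\psi\colon\A_r\to A$ (which exists by the modulus hypothesis alone), shows $\gamma_i,\gamma_o$ are quasicircles via \Cref{thm:qc-extension-of-circle-embeddings}, extends $\psi$ to a global quasiconformal map via the reflection argument of \Cref{lem:qc-extension}, pulls the boundary data back to $H_1$-quasisymmetric \emph{self}-maps of the two circles of $\A_r$, and interpolates those via \Cref{cor:qc-annulus-interpolation}, which in turn rests on the Beurling--Ahlfors extension, Mori's theorem, and Tukia's quantitative extension theorem (\Cref{thm:qc-annulus}). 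You instead use the \emph{conformal} uniformization $\chi\colon A\to\A_{1,\sigma}$, control its boundary quasisymmetry by reflecting $\chi^{-1}$ across the quasicircle $\gamma_i$ and invoking a quasiconformal variant of \Cref{cor:conformal-qs}, and then interpolate between circle maps of unequal radii explicitly in logarithmic coordinates. The reflection idea is shared; the decompositions (direct qc map vs.\ conformal uniformization) and the interpolation step (Tukia's theorem as a black box vs.\ an explicit cylinder construction) are genuinely different. Your first step is fine and correctly identifies where each modulus bound enters ($\Mod A\ge K_0^{-1}M$ gives the collar width $c_0$; $\Mod A\le K_0 M$ controls the stretch $u'$), and the quasiconformal analogue of \Cref{cor:conformal-qs} is legitimate since \Cref{thm:lv-qc-extension} already applies to quasiconformal, not just conformal, $\psi$.

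The one place where the sketch is not yet a proof is the mollified linear interpolation. For the unmollified $v(x,y)=(1-\lambda(x))\tilde g_i(y)+\lambda(x)\tilde g_o(y)$, the partial derivative $v_y$ is a convex combination of $\tilde g_i'$ and $\tilde g_o'$, and derivatives of quasisymmetric homeomorphisms of $\R$ are in general neither bounded above nor below away from zero; so the claim that ``$v_y$ lies between constants depending only on $H_3$ and $K_0$'' fails before mollifying, and after mollifying it is precisely the quantitative point you are trying to prove, not an ingredient you can cite. To close this you would need to specify the mollification scale as a function of $x$ (it must shrink to $0$ at $x=\log r$ and $x=\log R$ to match the boundary data), verify that the resulting $v_y$ and $v_x$ obey bounds coming only from the quasisymmetry constant --- this is essentially re-deriving the Beurling--Ahlfors dilatation estimate in the strip --- and check that the periodicity $v(x,y+2\pi)=v(x,y)+2\pi$ survives. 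None of this is impossible, but it is the technical heart of the step, and ``classical, essentially \cite[Prop.~2.30(b)]{BraFag14}'' slightly begs the question since that reference is non-quantitative and the theorem here \emph{is} the quantitative version. The paper avoids this entirely by reducing to circle self-maps and citing Tukia's \Cref{thm:qc-annulus}; if you want to keep your approach self-contained, you should either spell out the mollified estimate or replace the explicit interpolation with an appeal to \Cref{thm:qc-annulus} as in \Cref{cor:qc-annulus-interpolation}.
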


Before proving this theorem, we will first need some auxiliary
results.  In the statement of the following theorem, we say that a map
$f:X \to \C$ on an arbitrary subset $X \subset \C$ is
$K$-quasiconformal if it extends to a $K$-quasiconformal embedding of
a neighborhood of $X$. Furthermore, for disjoint Jordan curves
$S,T \subset \C$, we use the notation $S \prec T$ to indicate that
$S$ is contained in the bounded component of $\C \setminus T$, i.e.,
that $S$ is ``inside'' $T$.

\begin{thm}[{Quantitative qc extension theorem
    \cite[5.8]{tukiaLipschitzQuasiconformalApproximation1981}}]
  \label{thm:qc-annulus}
  Suppose $0<a<b<c<d$ and $K_0,M \ge 1$ are constants. Let
  $D = \A_{a,b} \cup \A_{c,d}$, and let $f_0: \overline{D} \to \C$ be a
  $K_0$-quasiconformal embedding. Assume that the images of the four
  boundary circles are nested in the same way that the circles
  themselves are, i.e., that
  $f_0(\s_a) \prec f_0(\s_b) \prec f_0(\s_c) \prec f_0(\s_d)$, and that
  $\diam f_0(\s_d) \le M \diam f_0(\s_a)$. Then there exists a
  $K$-quasiconformal map $f: \overline{\A_{a,d}} \to \C$ such that
  $f=f_0$ on $\s_a \cup \s_d$, where $K=K(a,b,c,d,K_0,M)$.
\end{thm}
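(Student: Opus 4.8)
\textbf{Proof strategy for \Cref{thm:qc-annulus}.}

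The plan is to reduce the statement to a situation where the two subannuli $\A_{a,b}$ and $\A_{c,d}$ have been mapped into a \emph{normalized} configuration, and then appeal to the Ahlfors--Beurling/Tukia machinery for extending quasisymmetric boundary data. First I would record the key geometric consequence of the hypotheses: since $f_0$ is a $K_0$-quasiconformal embedding on $\overline D$, by the quasisymmetry results recalled above (\Cref{thm:lv-qc-extension} and the remarks preceding \Cref{cor:conformal-qs}) each restriction $f_0|_{\s_a}, f_0|_{\s_b}, f_0|_{\s_c}, f_0|_{\s_d}$ is an $H$-quasisymmetric embedding of a circle with $H$ depending only on the ratios $b/a$, $d/c$ and on $K_0$ — hence only on $a,b,c,d,K_0$. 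In particular, each image curve $\Gamma_\bullet \defeq f_0(\s_\bullet)$ is an $M'$-quasicircle (Ahlfors), and the nesting hypothesis $\Gamma_a \prec \Gamma_b \prec \Gamma_c \prec \Gamma_d$ together with the diameter bound $\diam \Gamma_d \le M \diam \Gamma_a$ controls the ``shape'' of the region $A_0$ bounded by $\Gamma_a$ and $\Gamma_d$ into which we must map $\overline{\A_{a,d}}$.

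The core step is then the following. Map the innermost quasicircle $\Gamma_a$ and the outermost quasicircle $\Gamma_d$ simultaneously to round circles by a single global quasiconformal map: concretely, one first normalizes by an affine map so that $\diam \Gamma_a \asymp 1$ and, using the bounded-turning characterization of quasicircles plus the separation and diameter bounds, produces a $\widetilde K$-quasiconformal homeomorphism $\Psi:\C\to\C$ with $\widetilde K = \widetilde K(a,b,c,d,K_0,M)$ sending $\Gamma_a$ to $\s_{a}$ and $\Gamma_d$ to $\s_{d}$ (this is exactly the kind of normalization supplied by Tukia's extension theorem \cite{tukiaLipschitzQuasiconformalApproximation1981} for finitely many nested quasicircles with controlled geometry; it is here that I expect the main work to lie). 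After replacing $f_0$ by $\Psi\circ f_0$ we may therefore assume $f_0(\s_a)=\s_a$ and $f_0(\s_d)=\s_d$, while $f_0(\s_b)$ and $f_0(\s_c)$ are still $H$-quasicircles trapped between $\s_a$ and $\s_d$. Now $f_0|_{\s_a\cup\s_d}$ is an $H'$-quasisymmetric self-map of $\s_a\cup\s_d$, and by the Ahlfors--Beurling reflection/interpolation construction (or by applying \Cref{lem:qc-interpolation} to the round annulus $\A_{a,d}$ with $\phi_i = f_0|_{\s_a}$, $\phi_o = f_0|_{\s_d}$ — its proof is independent of this theorem) we obtain a $K$-quasiconformal map $f:\overline{\A_{a,d}}\to\overline{\A_{a,d}}$ with $f=f_0$ on $\s_a\cup\s_d$ and $K$ depending only on $H'$, hence only on $a,b,c,d,K_0,M$. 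Composing back with $\Psi^{-1}$ (still $\widetilde K$-quasiconformal) yields the desired extension $f:\overline{\A_{a,d}}\to\C$ agreeing with the original $f_0$ on $\s_a\cup\s_d$, with the quasiconformal constant a function of $a,b,c,d,K_0,M$ alone.

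The step I expect to be the main obstacle is the construction of the global normalizing map $\Psi$ straightening both $\Gamma_a$ and $\Gamma_d$ with quantitative control: one must see that two disjoint, nested quasicircles with comparable diameters and with the inner one separated from the outer one (the separation being guaranteed because the conformal modulus of the annulus between $f_0(\s_a)$ and $f_0(\s_d)$ is bounded below in terms of $\Mod\A_{a,d}$ and $K_0$) can be simultaneously rounded by a single quasiconformal homeomorphism of the plane whose dilatation is bounded in terms of all these data. This is a standard but somewhat delicate compactness/normal-families argument: one fixes the normalization, observes that the space of admissible $(\Gamma_a,\Gamma_d)$ is precompact in the Hausdorff topology, that the straightening can be done for each limiting configuration, and that quasiconformal constants are controlled under Hausdorff limits; alternatively one invokes \cite[5.8]{tukiaLipschitzQuasiconformalApproximation1981} directly, which is precisely engineered for this purpose. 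Everything downstream of $\Psi$ is then a routine application of one-dimensional quasisymmetric extension theory, so the whole argument is quantitative as required.
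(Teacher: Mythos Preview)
The paper does not prove \Cref{thm:qc-annulus}: it is quoted from Tukia \cite[5.8]{tukiaLipschitzQuasiconformalApproximation1981} as a black box, and the paper only appends the remark that Tukia's result holds in all dimensions. So there is no proof in the paper to compare your proposal against.

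That said, your write-up contains a genuine circularity that you should be aware of. You propose, once the boundary curves have been straightened to $\s_a$ and $\s_d$, to invoke \Cref{lem:qc-interpolation} ``whose proof is independent of this theorem''. In this paper that is false: the proof of \Cref{lem:qc-interpolation} calls \Cref{cor:qc-annulus-interpolation}, and the proof of \Cref{cor:qc-annulus-interpolation} is precisely an application of \Cref{thm:qc-annulus}. So using \Cref{lem:qc-interpolation} here would be circular. Your alternative suggestion (Ahlfors--Beurling extension of each circle self-map separately, then glue) avoids the circularity in principle, but you then need to control the interpolation across the gap $\A_{b,c}$ without invoking \Cref{thm:qc-annulus}, which is exactly the nontrivial content.

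More broadly, your outline defers the real difficulty to the construction of the simultaneous straightening map $\Psi$: you say one can do this ``by a standard but somewhat delicate compactness/normal-families argument'' or ``invoke \cite[5.8]{tukiaLipschitzQuasiconformalApproximation1981} directly''. The second option is just citing the theorem you are meant to prove. The first option is plausible in spirit, but it is not a proof as written; in particular one has to be careful that precompactness of the configurations $(\Gamma_a,\Gamma_d)$ in the Hausdorff sense actually yields uniform qc bounds for the straightening, and that degenerations (e.g.\ $\Gamma_a$ approaching $\Gamma_d$) are ruled out by the hypotheses --- the diameter bound $\diam\Gamma_d\le M\diam\Gamma_a$ and the lower modulus bound coming from $K_0$ and $\Mod\A_{a,d}$ do give two-sided control, but this needs to be argued. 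As it stands the proposal is an outline that identifies the right ingredients but does not supply the key step.
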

\begin{rem}
  The cited theorem actually applies to quasiconformal maps in
  arbitrary dimension, in which case the constant $K$ additionally
  depends on the dimension.
\end{rem}
\begin{cor}[Interpolation between circle maps]
  \label{cor:qc-annulus-interpolation}
  Suppose $0<r_1<r_2$ and $H \ge 1$ are constants. Let $f_j:\s_{r_j}
  \to \s_{r_j}$ be $H$-quasisymmetric for $j=1,2$. Then there exists
  $K=K(r_1,r_2,H)$, and a $K$-quasiconformal map $f:\C \to \C$, such
  that $f=f_j$ on $\s_{r_j}$ for $j=1,2$.
\end{cor}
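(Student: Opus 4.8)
We may assume $f_1$ and $f_2$ are orientation-preserving: otherwise pre- and post-compose everything below with $z\mapsto\bar z$, a $1$-quasiconformal map fixing each $\s_{r_j}$ setwise. Put $\mu\defeq\sqrt{r_1r_2}$, so $r_1<\mu<r_2$. It suffices to construct a $K(r_1,r_2,H)$-quasiconformal homeomorphism $\Psi_1\colon\C\to\C$ with
\[
  \Psi_1|_{\s_{r_1}}=f_1 \qquad\text{and}\qquad \Psi_1|_{\C\setminus\D_\mu}=\id,
\]
together with, symmetrically, a $K(r_1,r_2,H)$-quasiconformal self-map $\Psi_2$ of $\C$ with $\Psi_2|_{\s_{r_2}}=f_2$ and $\Psi_2|_{\overline{\D_\mu}}=\id$: indeed $f\defeq\Psi_2\circ\Psi_1$ is then $K(r_1,r_2,H)$-quasiconformal, and since $f_1(\s_{r_1})=\s_{r_1}\subset\D_\mu$ lies where $\Psi_2=\id$ while $\s_{r_2}\subset\C\setminus\overline{\D_\mu}$ lies where $\Psi_1=\id$, we get $f|_{\s_{r_1}}=f_1$ and $f|_{\s_{r_2}}=f_2$. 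The construction of $\Psi_2$ mirrors that of $\Psi_1$ (for instance, conjugate it by the inversion $z\mapsto\mu^2/\bar z$, which interchanges $\D_\mu$ and $\C\setminus\overline{\D_\mu}$, fixes $\s_\mu$, and maps $\s_{r_2}$ onto $\s_{r_1}$), so we only describe $\Psi_1$.

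To build $\Psi_1$, first take a $K_1(H)$-quasiconformal extension $\check G_1\colon\overline{\D_{r_1}}\to\overline{\D_{r_1}}$ of $f_1$ fixing the origin, and a $K_1(H)$-quasiconformal extension $\hat G_1\colon\CC\setminus\D_{r_1}\to\CC\setminus\D_{r_1}$ of $f_1$ fixing $\infty$; both exist via the Beurling--Ahlfors construction, performed in logarithmic coordinates so that the centre is fixed ($\hat G_1$ being the conjugate of such a disk extension by the inversion $z\mapsto r_1^2/\bar z$). The inversion-conjugate of $\hat G_1$ is a $K_1$-quasiconformal self-map of $\overline{\D_{r_1}}$ fixing $0$, hence, by Mori's theorem \cite{LehVir}, Hölder continuous with a modulus of continuity that depends only on $r_1$ and $H$; since $\hat G_1$ restricts to $f_1$ on $\s_{r_1}$ and is a homeomorphism of $\CC$, this forces $\hat G_1(\s_b)$ to lie inside $\D_{(r_1+\mu)/2}$ and to surround $\D_{r_1}$ as soon as $b-r_1>0$ is small enough in terms of $r_1,r_2,H$. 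Fix such a $b\in\bigl(r_1,(r_1+\mu)/2\bigr)$ and set $c\defeq(r_1+\mu)/2$. Let $f_0$ be the map on $\overline{\A_{r_1,b}}\cup\overline{\A_{c,\mu}}$ equal to $\hat G_1$ on the inner annulus and to $\id$ on the outer one; it extends (across $\s_{r_1}$ via $\check G_1$, across $\s_\mu$ via $\id$) to a $K_1$-quasiconformal map on a neighbourhood, the images of its boundary circles $\s_{r_1},\s_b,\s_c,\s_\mu$ are $\s_{r_1},\hat G_1(\s_b),\s_c,\s_\mu$ --- correctly nested, as $r_1<c<\mu$ and $\s_{r_1}\prec\hat G_1(\s_b)\prec\s_c$ by the choice of $b$ --- and $\diam\s_\mu=2\mu=(\mu/r_1)\diam\s_{r_1}$. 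By \Cref{thm:qc-annulus} there is a $K(r_1,r_2,H)$-quasiconformal map $F_1\colon\overline{\A_{r_1,\mu}}\to\C$ agreeing with $f_0$ on $\s_{r_1}\cup\s_\mu$, i.e.\ with $f_1$ on $\s_{r_1}$ and with $\id$ on $\s_\mu$; being a quasiconformal embedding of a closed annulus that maps its boundary onto $\s_{r_1}\cup\s_\mu$, its image is $\overline{\A_{r_1,\mu}}$. Finally let $\Psi_1$ be $\check G_1$ on $\overline{\D_{r_1}}$, $F_1$ on $\overline{\A_{r_1,\mu}}$, and $\id$ on $\C\setminus\D_\mu$; the three pieces agree on $\s_{r_1}$ and $\s_\mu$, so $\Psi_1$ is a homeomorphism of $\C$, $K(r_1,r_2,H)$-quasiconformal off $\s_{r_1}\cup\s_\mu$ and hence globally so by removability of analytic curves \cite{LehVir}, with the desired boundary behaviour.

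The step requiring real work is the control just used: that $\s_b$ is carried by the boundary extension $\hat G_1$ into a thin round collar of $\s_{r_1}$, by a margin depending only on $(r_1,r_2,H)$ and not on $f_1$ --- this is what makes the nesting hypothesis of \Cref{thm:qc-annulus} hold and gives the constant the asserted form. It relies on the \emph{uniform} modulus of continuity of centre-fixing quasiconformal self-maps of a disk (Mori's theorem), together with the fact that Beurling--Ahlfors extensions can be normalized to fix the centre; a generic quasiconformal extension of $f_1$ need not be uniformly continuous up to $\s_{r_1}$, so this normalization is essential.
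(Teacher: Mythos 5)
Your proof is correct and rests on exactly the same core ingredients as the paper's: a Beurling--Ahlfors extension normalized to fix the centre of the disk, Mori's theorem to get a modulus of continuity depending only on $(r_j,H)$ so that the image of a thin collar of $\s_{r_j}$ is trapped in a round collar (which is what yields the nesting hypothesis of \Cref{thm:qc-annulus}), and Tukia's quantitative extension theorem itself. The one structural difference is that you factor $f=\Psi_2\circ\Psi_1$ and invoke \Cref{thm:qc-annulus} twice --- once on $\overline{\A_{r_1,\sqrt{r_1 r_2}}}$ to interpolate $f_1$ against the identity, and once, by inversion symmetry, on $\overline{\A_{\sqrt{r_1 r_2},r_2}}$ to interpolate the identity against $f_2$ --- whereas the paper invokes \Cref{thm:qc-annulus} a single time on $\overline{\A_{r_1,r_2}}$, taking $f_0$ to be the extension of $f_1$ on a thin inner collar and of $f_2$ on a thin outer collar. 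Both give a constant of the asserted form (yours is a square of the one-step constant) and both hinge on the same Mori estimate; the paper's version is simply the shorter write-up, while yours makes the independence of the two boundary interpolations explicit. Your preliminary reduction to orientation-preserving $f_j$ is a fine point the paper leaves implicit (in its application, \Cref{lem:qc-interpolation}, the boundary maps are assumed orientation-preserving, which is in any case forced on any qc self-map of $\C$ fixing each $\s_{r_j}$ setwise).
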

\begin{proof}
  By the Beurling-Ahlfors extension theorem \cite[Sec.~IV]{ahl}, there
  exists $K_0=K_0(H)$ such that each $f_j$ extends continuously to a
  $K_0$-quasiconformal self-map of the disk $\D_{r_j}$, normalized by
  $f_j(0) = 0$, which can then be extended by reflection to a
  $K_0$-quasiconformal map of the plane.  By Mori's theorem
  \cite[Sec.~III]{ahl}, we have the uniform H\"older estimates
  \[
    |f_j(z)-f_j(w)| \le M_j |z-w|^{1/K_0}
  \]
  whenever $z,w \in \overline{\D_{r_j}}$, with explicit constants
  $M_j = 16 r_j^{1-1/K_0}$. In particular, this shows that for any
  $\epsilon > 0$ there exists $\delta = \delta(\epsilon,r_j,K_0) > 0$
  such that $||f_j(z)|-r_j|\leq \epsilon $ whenever
  $||z|-r_j|\leq \delta$.  (This holds without the assumption that
  $|z| \le r_j$, using the fact that $f_j$ is defined by reflection
  for $|z|>r_j$.) Picking $\epsilon>0$ small enough such that
  $r_1 + \epsilon < r_2 -\epsilon$, we use the corresponding $\delta$
  (reducing it to be smaller than $\epsilon$, if necessary) to define
  the radii to use in applying \Cref{thm:qc-annulus} as $a = r_1$,
  $b = r_1 + \delta$, $c = r_2 -\delta$, and $d = r_2$. Defining
  $D = \A_{r_1,r_1+\delta} \cup \A_{r_2-\delta,r_2}$ and
  $f_0:D \to \C$ by
  \[
    f_0(z) =
    \begin{cases}
      f_1(z) & \text{for } r_1 \leq |z|<r_1+\delta, \\
      f_2(z) & \text{for } r_2-\delta < |z|\leq r_2,
    \end{cases}
  \]
  we have that $f_0$ is a $K_0$-quasiconformal embedding respecting
  the nesting of the boundary circles, and that $\diam f_0(\s_{r_2}) =
  M \diam f_0(\s_{r_1})$ with $M = r_2/r_1$. This shows that the
  assumptions of \Cref{thm:qc-annulus} are satisfied, so that
  there exists a $K$-quasiconformal map $f: \overline{\A_{r_1,r_2}}
  \to \C$ such that $f = f_0 = f_1$ on $\s_{r_1}$ and $f = f_0 = f_2$
  on $\s_{r_2}$, where $K = K(r_1, r_2, H)$. Extending $f$ by $f_1$ on
  $\D_{r_1}$, and by $f_2$ on $\C \setminus \overline{\D_{r_2}}$, we
  end up with the claimed extension to the plane.
\end{proof}

\begin{lemma}[Extension of qc maps between annuli]
  \label{lem:qc-extension}
  Let $0<r<1$, and let $\psi:\A_r \to A$ be a $K$-quasiconformal map
  onto an annulus $A$ bounded by two $K$-quasicircles. Then $\psi$ has
  a $K'$-quasiconformal extension to $\C$, with $K'$ only depending on
  $K$ and $r$.
\end{lemma}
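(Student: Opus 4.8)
The plan is to enlarge the domain of $\psi$ by reflecting it across both boundary circles of $\A_r$, and then apply \Cref{thm:lv-qc-extension} to promote the result to a global quasiconformal map with quantitative control. I will use two standard facts. First, every $K$-quasicircle $\gamma$ admits a \emph{quasiconformal reflection}: writing $\gamma=h(\s)$ for a $K$-quasiconformal $h\colon\C\to\C$ (the definition of a $K$-quasicircle), the map $\sigma_\gamma\defeq h\circ\iota\circ h^{-1}$, with $\iota(z)=1/\bar z$, is a $K^2$-quasiconformal, orientation-reversing involution of $\C$ that fixes $\gamma$ pointwise and interchanges the two components of $\C\setminus\gamma$. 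Second, a quasiconformal homeomorphism onto a ring domain whose two boundary components are Jordan curves extends to a homeomorphism of the closures (a Carathéodory-type statement; see \cite{LehVir}). Hence $\psi$ extends to a homeomorphism $\overline{\A_r}\to\overline A$, still denoted $\psi$, with $\psi(\s_r)=\gamma_i$ and $\psi(\s_1)=\gamma_o$, where $\gamma_i$ and $\gamma_o$ are the inner and outer boundary curves of $A$.

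Set $\sigma_i\defeq\sigma_{\gamma_i}$ and $\sigma_o\defeq\sigma_{\gamma_o}$, each $K^2$-quasiconformal, and let $\tau_r(z)\defeq r^2/\bar z$ and $\tau_1(z)\defeq1/\bar z$ be the anticonformal reflections in $\s_r$ and $\s_1$, so that $\tau_r$ maps $\overline{\A_{r^2,r}}$ onto $\overline{\A_r}$ and $\tau_1$ maps $\overline{\A_{1,1/r}}$ onto $\overline{\A_r}$. Define $\Psi$ on $\overline{\A_{r^2,1/r}}$ by
\[
  \Psi(z)\defeq
  \begin{cases}
    \sigma_i\circ\psi\circ\tau_r(z), & r^2\le|z|\le r,\\
    \psi(z), & r\le|z|\le 1,\\
    \sigma_o\circ\psi\circ\tau_1(z), & 1\le|z|\le 1/r.
  \end{cases}
\]
On $\s_r$ the first two formulas agree, since $\tau_r|_{\s_r}=\id$ and $\sigma_i$ fixes $\gamma_i=\psi(\s_r)$ pointwise; likewise the last two agree on $\s_1$, so $\Psi$ is well defined and continuous. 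Each branch is orientation-preserving and $K^3$-quasiconformal, being a composition of an (at most) $K^2$-quasiconformal reflection, the $K$-quasiconformal map $\psi$, and an anticonformal map; hence $\Psi$ is $K^3$-quasiconformal on $\A_{r^2,1/r}\setminus(\s_r\cup\s_1)$, and since $\s_r$ and $\s_1$ are real-analytic curves, and thus removable for quasiconformality (see, e.g., \cite{BraFag14, LehVir}), $\Psi$ is $K^3$-quasiconformal on all of $\A_{r^2,1/r}$. To check injectivity, note that over the three open sub-annuli $\Psi$ maps $\A_{r^2,r}$ onto $\sigma_i(A)$, which lies in the bounded component of $\C\setminus\gamma_i$, maps $\A_r$ onto $A$, and maps $\A_{1,1/r}$ onto $\sigma_o(A)$, which lies in the unbounded component of $\C\setminus\gamma_o$; these three sets are pairwise disjoint except along $\gamma_i$ and $\gamma_o$, where consecutive branches coincide. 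Thus $\Psi$ is a $K^3$-quasiconformal homeomorphism of the ring domain $\A_{r^2,1/r}$ onto a ring domain, and it restricts to $\psi$ on $\A_r$.

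Since $0<r<1$, we have $r^2<r$ and $1<1/r$, so $E\defeq\overline{\A_r}$ is compact and contained in $D\defeq\A_{r^2,1/r}$. Applying \Cref{thm:lv-qc-extension} to the $K^3$-quasiconformal map $\Psi\colon D\to\Psi(D)$ with this compact set $E$ yields a $K'$-quasiconformal map $\widetilde\Psi\colon\C\to\C$ with $\widetilde\Psi=\Psi$ on $E$, where $K'=K'(K^3,D,E)$; since $D$ and $E$ depend only on $r$, this gives $K'=K'(K,r)$. As $\Psi=\psi$ on $\A_r\subset E$, the map $\widetilde\Psi$ is the desired extension.

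The one genuinely delicate point is the gluing in the second step: verifying that the reflected branches and $\psi$ fit together into a single injective map that is quasiconformal across the circles $\s_r$ and $\s_1$. This is exactly where the hypothesis that $\partial A$ consists of $K$-quasicircles is used, since it produces reflections $\sigma_i,\sigma_o$ whose dilatation depends on $K$ alone; the concluding application of \Cref{thm:lv-qc-extension} is then essentially a black box, with the dependence $K'=K'(K,r)$ automatic because the enlarged annulus $\A_{r^2,1/r}$ is determined by $r$.
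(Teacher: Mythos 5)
Your proof is correct and follows essentially the same route as the paper: reflect $\psi$ across $\s_r$ and $\s_1$ using the anticonformal reflections $\tau_r,\tau_1$ on the source side and $K^2$-quasiconformal reflections in the boundary quasicircles of $A$ on the target side, obtaining a $K^3$-quasiconformal map on $\A_{r^2,1/r}$, and then apply Theorem~\ref{thm:lv-qc-extension} with $D=\A_{r^2,1/r}$ and $E=\overline{\A_r}$. Your write-up is somewhat more detailed (explicit boundary extension, continuity across the gluing circles, injectivity of the pieced map, removability of the analytic circles), but the decomposition, the key lemmas, and the final quantitative conclusion all coincide with the paper's argument.
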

\begin{proof}
  Let $\Gamma^i$ and $\Gamma^o$ denote the inner and outer boundaries
  of $A$ respectively.  Every $K$-quasicircle $\Gamma$ admits a
  $K^2$-quasiconformal reflection (see e.g.\ \cite[Ch.~IV, D]{ahl} or
  \cite[Prop. 4.9.12]{hubbard}), i.e. a $K^2-$quasiconformal
  involution $\phi_\Gamma$ fixing every point of $\Gamma$ and
  interchanging its complementary components.\footnote{In fact,
    something slightly stronger is true: $K$-quasicircles always admit
    $K$-quasiconformal reflections. However, a $K^2$-quasiconformal
    reflection suffices for our purposes, and the proof of this result
    is much easier.} Using the reflection with respect to the circle
  of radius $s>0$, $\tau_s(z)=s^2/\bar{z}$, for $s=1$ and $s=r$, and the two reflections
  $\phi_{\Gamma^i}$ and $\phi_{\Gamma^o}$, we can extend $\psi$ to
  $\A_{r^2,1/r}$ by
  \[
    \widehat{\psi}(z) =
    \begin{cases}
      \psi(z) & \text{if $z\in \A_r$};\\
      \phi_{\Gamma^o}\circ\psi\circ\tau_1(z) & \text{if $1\leq |z|
        < \frac1r$ }\\
      \phi_{\Gamma_i}\circ\psi\circ\tau_r(z) & \text{if
        $r^2< |z| \leq r$ }.
    \end{cases}
  \]

  Then, $\widehat{\psi}: \A_{r^2,1/r} \to \widehat{A}$ is a
  $K^3$-quasiconformal map, where $\widehat{A}$ is a topological
  annulus containing the closure of $A$. Applying
  \Cref{thm:lv-qc-extension} with $D=\A_{r^2,1/r}$ and
  $E=\overline{\A_r}$ we get the desired $K'$-quasiconformal extension
  $\Psi: \C \to \C$, with $K'$ only depending on $K$ and $r$.
\end{proof}

We are finally ready for the proof of \Cref{lem:qc-interpolation}.
\begin{proof}[Proof of \Cref{lem:qc-interpolation}]
  By scaling the round annulus, we can assume that $R=1$, so that
  $\A_{r,R} = \A_r$ and $M = \frac1{2\pi} \log \frac1r$. From the
  condition on the moduli of $\A_r$ and $A$, we know that there exists
  a $K_0-$quasiconformal isomorphism $\psi: \A_r \to A$. Note that by
  \Cref{thm:qc-extension-of-circle-embeddings}, the curves
  bounding $A$ are $K_1-$quasicircles, where $K_1$ only depends on
  $H$.  By Lemma~\ref{lem:qc-extension}, $\psi$ extends to a
  $K_2$-quasiconformal map of the plane, with $K_2$ only depending on
  $K_0, H$ and $M$. Then $\widetilde{\phi}_i = \psi^{-1} \circ \phi_i$
  and $\widetilde{\phi}_o = \psi^{-1} \circ \phi_o$ are
  orientation-preserving $H_1$-quasisymmetric self-maps of the inner
  and outer boundary circles of $\A_r$, with $H_1$ only depending on
  $K_2$ and $H$.

  By Corollary~\ref{cor:qc-annulus-interpolation}, there exists a a
  $K_3$-quasiconformal homeomorphism $\widetilde{\phi}:\A_r \to \A_r$
  with boundary maps $\widetilde{\phi}_i$ and $\widetilde{\phi}_o$,
  where $K_3$ only depends on $M$ and $H_1$.  Defining
  $\phi \defeq \psi \circ \widetilde{\phi}$, we obtain the desired
  $K$-quasiconformal map $\phi: \A_r \to \overline{A}$, with $K = K_2 \cdot K_3$
  only depending on $K_0$, $H$, and $M$.
\end{proof}

\section{Uniformly hyperbolic Blaschke products}
\label{sec:hyp_BP}

A Blaschke product $b:\D \to \D$ of degree $2\leq d <\infty$ is {\em
  hyperbolic} if it has a fixed point $z_0$ inside $\D$, which by the
Schwarz Lemma is unique. Moreover, $|b'(z_0)|<1$ and $z_0$ is a global
attractor under $b$. Note that we always consider Blaschke products
restricted to the unit disk. In particular, when speaking about the
critical points and critical values of a Blaschke product, we only
consider those inside the unit disk.

In the following, we will normalize our Blaschke products to fix zero,
so that
\begin{equation}\label{eq_BP_fix0}
  b(z) = z \prod_{k=1}^{d-1} \frac{z-a_k}{1-\overline{a_k}z},
  \qquad a_k\in\D.
\end{equation}
Then, the {\em multiplier} of the fixed point at $a_0 = 0$ is
\begin{equation}\label{eq_mult}
  b'(0)=(-1)^{d-1} \prod_{k=1}^{d-1} a_k,
\end{equation}
and therefore in particular $|b'(0)|\leq \min_k |a_k|$. 

Hyperbolic Blaschke products of the form \eqref{eq_BP_fix0} satisfy
the following properties.
\begin{lemma}[Properties of hyperbolic Blaschke products] \label{lem:blaschke-basics}
  Let $b$ be a Blaschke product of the form \eqref{eq_BP_fix0}. Then
  the critical points of $b$ in the unit disk belong to the hyperbolic
  convex hull of its zeros $\{a_0, a_1, \ldots,a_{d-1}\}$.
  Furthermore, given a radius $r$ satisfying $\max_{k} |a_k| <r<1$,
  then $D_r \defeq b^{-1}(\D_r)$ satisfies the following:
  \begin{enumerate}[\rm (a)]
  \item \label{item:jordan-domain} $D_r$ is an analytic Jordan domain
    and $\overline{\D_r}\subset D_r$;
  \item \label{item:annulus} $A^r\defeq D_r\setminus \overline{\D_r}$
    is an annulus whose modulus satisfies
    \[
      \frac{1}{4\pi} \log \frac{1}{r(2-r)} \leq \Mod(A^r)\leq
      \frac{d-1}{2\pi d} \log \frac{1}{r},
    \]
   where the right equality is satisfied  if $b(z)=z^2$.
  \item \label{item:derivative-estimate} $|b'(0)|< r^{d-1}$.
  \end{enumerate}
\end{lemma}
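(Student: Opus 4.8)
The plan is to treat the three assertions in turn, starting from the structural fact about critical points, since the rest is based on the geometry of $D_r = b^{-1}(\D_r)$.

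First, the location of the critical points. The derivative $b'$ is a rational map, and writing $b = z \prod_{k=1}^{d-1} \phi_{a_k}$ with $\phi_{a_k}(z) = \frac{z-a_k}{1-\overline{a_k}z}$, I would use the logarithmic derivative $\frac{b'}{b}(z) = \sum_{k=0}^{d-1} \frac{1-|a_k|^2}{(z-a_k)(1-\overline{a_k}z)}$, valid away from the $a_k$. A critical point $c \in \D$ with $b(c) \ne 0$ satisfies $\sum_k \frac{1-|a_k|^2}{(c-a_k)(1-\overline{a_k}c)} = 0$. The standard trick is to observe that for $|c| < 1$ and $|a| < 1$, the quantity $w_k := \frac{1-|a_k|^2}{(c-a_k)(1-\overline{a_k}c)}$ satisfies $\overline{w_k}(c - a_k)(1-\overline{a_k}c)$ has a fixed sign relationship — more precisely, one shows that each term, after multiplying by a suitable common positive factor, expresses $c$ as lying in a half-plane determined by $a_k$; summing, $c$ is a (positively weighted) convex-type combination forcing $c$ into the hyperbolic convex hull of $\{a_0,\dots,a_{d-1}\}$. (This is the Blaschke-product analogue of the Gauss–Lucas theorem; I would either cite it or give the half-plane computation.) Since $\max_k|a_k| < r$, the hyperbolic convex hull of the zeros lies in $\D_r$, so all critical points lie in $\D_r$; in particular the only critical values in $b^{-1}(\D_r)$ come from critical points already inside $\D_r$.

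Next, parts \eqref{item:jordan-domain} and \eqref{item:annulus}. Since $\max_k|a_k| < r < 1$ and all critical points of $b$ lie in the hyperbolic hull of the zeros, hence in $\D_r$, the restriction $b : D_r \to \D_r$ is a proper map of degree $d$ whose only critical values lie in $\D_r$; moreover $b^{-1}(\overline{\D_r})$ contains no critical points on its boundary circle $b^{-1}(\s_r)$. By Corollary~\ref{cor:uniform-schwarz-preimage} applied with $g = b$ (noting $b \in \F_\rho$ with $\rho = |b'(0)| \le \max_k|a_k| < r$), we get $\overline{\D_R} \subset b^{-1}(\overline{\D_r})$ with $R = \sqrt{r/(2-r)} > r$, which gives $\overline{\D_r} \subset D_r$. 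That $D_r$ is an analytic Jordan domain follows because $\s_r$ avoids all critical values, so $b^{-1}(\s_r)$ is a disjoint union of analytic Jordan curves; connectedness and simple-connectedness of $D_r$ follow since $b|_{D_r}$ is proper onto the disk $\D_r$ with no critical values outside (Riemann–Hurwitz forces $D_r$ connected and simply connected, hence $b^{-1}(\s_r)$ is a single curve and $D_r$ is a Jordan domain). Then $A^r = D_r \setminus \overline{\D_r}$ is a genuine (non-degenerate) annulus. For the modulus bounds: the upper bound is a length–area / extremal-length estimate — the round annulus $\A_{r,1}$ has modulus $\frac{1}{2\pi}\log\frac1r$, and $b$ restricted to the subannulus $b^{-1}(\A_{r,1}) \cap (\D \setminus \overline{\D_r})$ is a degree-$d$ covering onto $\A_{r,1}$, or rather one compares $A^r$ with the preimage of $\A_{r,1}$; using that $\Mod$ scales by $1/d$ under a degree-$d$ covering between round-type annuli and that $A^r$ is contained in $b^{-1}(\A_{r,1})$, one gets $\Mod(A^r) \le \frac{1}{d}\cdot\frac{1}{2\pi}\log\frac1r \cdot \frac{d-1}{1}$ — I would organize this via the Grötzsch inequality, splitting $D_r \setminus \overline{\D_r}$ appropriately and using that the map has degree $d$ while the curve $\s_r$ already accounts for "one sheet"; the equality case $b(z) = z^2$ is a direct computation. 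The lower bound comes from Corollary~\ref{cor:uniform-schwarz-preimage}: since $\overline{\D_R} \subset D_r$ with $R = \sqrt{r/(2-r)}$, the annulus $A^r$ contains the round annulus $\A_{r,R}$, so $\Mod(A^r) \ge \frac{1}{2\pi}\log\frac{R}{r} = \frac{1}{4\pi}\log\frac{R^2}{r^2} = \frac{1}{4\pi}\log\frac{1}{r(2-r)}$.

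Finally part \eqref{item:derivative-estimate}: from \eqref{eq_mult}, $|b'(0)| = \prod_{k=1}^{d-1}|a_k| < r^{d-1}$ since each $|a_k| < r$ — immediate.

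The main obstacle I anticipate is the \emph{upper} modulus bound $\Mod(A^r) \le \frac{d-1}{2\pi d}\log\frac1r$: the clean factor $\frac{d-1}{d}$ (rather than a cruder $\frac1d$ or $1$) requires carefully exploiting that one of the $d$ sheets of $b^{-1}(\A_{r,1})$ is "used up" by the region between $\s_r$ and $b^{-1}(\s_r)$ closest to it, and this is cleanest via Grötzsch's inequality for the subdivision of $D_r \setminus \overline{\D_r}$ together with the behavior of modulus under the degree-$d$ proper map; getting the constant sharp (with the stated equality for $z \mapsto z^2$) is the delicate point. The critical-point location statement is classical but still needs the half-plane argument written carefully; everything else is routine given Corollary~\ref{cor:uniform-schwarz-preimage}.
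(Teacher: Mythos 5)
Your overall structure is right, and parts (a), (c), and the lower modulus bound match the paper's argument (Corollary~\ref{cor:uniform-schwarz-preimage} gives $\overline{\D_R}\subset D_r$ with $R=\sqrt{r/(2-r)}$, hence $A^r \supset \A_{r,R}$ and the lower bound follows). For the critical-point location, the paper simply cites Walsh's theorem rather than running the Gauss--Lucas-style half-plane computation, but your sketch of that computation is the correct classical route, so either path works.

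The genuine gap is in the \emph{upper} modulus bound, and you flagged it yourself. You propose to ``split $D_r \setminus \overline{\D_r}$'', but $D_r\setminus\overline{\D_r}$ \emph{is} $A^r$, and subdividing the annulus whose modulus you are trying to bound does not produce the estimate. The correct decomposition is of the \emph{larger} round annulus $\D\setminus\overline{\D_r}$, which contains the two disjoint essential sub-annuli $A^r = D_r\setminus\overline{\D_r}$ and $\D\setminus\overline{D_r}$. Grötzsch's inequality then gives
\[
\Mod(A^r) + \Mod\bigl(\D\setminus\overline{D_r}\bigr) \;\le\; \Mod\bigl(\D\setminus\overline{\D_r}\bigr) = \tfrac{1}{2\pi}\log\tfrac1r,
\]
and the crucial second ingredient is that $b\colon \D\setminus\overline{D_r}\to\D\setminus\overline{\D_r}$ is an unbranched degree-$d$ covering (since all critical values of $b$ lie inside $\D_r$), so $\Mod(\D\setminus\overline{D_r}) = \tfrac{1}{2\pi d}\log\tfrac1r$ exactly. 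Subtracting gives $\Mod(A^r)\le \tfrac{d-1}{2\pi d}\log\tfrac1r$, with equality when $b(z)=z^d$ (and in particular $z^2$) because then $D_r=\D_{r^{1/d}}$ and $\D\setminus\overline{\D_r}$ is the disjoint union of two round annuli, making Grötzsch an equality. Your heuristic ``one sheet is used up by $\s_r$'' is the right intuition, but the concrete mechanism is that the outer annulus $\D\setminus\overline{D_r}$ is a full $d$-fold cover, and it is the \emph{difference} of moduli that gives the factor $\frac{d-1}{d}$ --- not a direct degree-$d$ modulus computation on $A^r$ itself, which is not a covering of anything convenient.
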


\begin{proof}
  The fact that the critical points of $b$ belong to the hyperbolic
  convex hull of the zeros of $b$ is a classical result by Walsh, see
  \cite[Theorem~6.1.6]{GMR}.

  Corollary~\ref{cor:uniform-schwarz-preimage} shows that we have
  $\overline{\D_r} \subset D_r$, and the assumption $r > \max_k |a_k|$ implies that all zeros of $b$ are contained in $\D_r$, hence in the
  connected component of $D_r$ containing $\D_r$. Since every
  connected component of $D_r$ must contain at least one zero, this
  shows that $D_r$ is connected, and by the maximum principle, it is
  also simply connected.  The assumption further implies that all the
  critical values of $b$ are contained in $\D_r$, thus $\partial D_r =
  b^{-1}(\s_r)$ is an analytic Jordan curve.
  
  For (\ref{item:annulus}), note that $b$ maps
  $\D\setminus \overline{D_r}$ to $\D\setminus \overline{\D_r}$ with
  degree $d$. Since the latter has modulus $-\frac{1}{2\pi}\log r$, it
  follows that
  $\Mod(\D\setminus \overline{D_r})= -\frac{1}{2\pi d}\log r$.  Now,
  by the Grötzsch principle, we have
  \[
    \Mod(A^r) + \Mod(\D\setminus\overline{D_r}) \leq \Mod
    (\D\setminus \overline{\D_r}),
  \]
  which gives the right inequality. Observe that if $b(z)=z^2$,
  $D_r=\{|z|< r^{1/d}\}$ and the modulus is directly computable.  To
  see the left inequality, let $\rho\defeq \vert b'(0)\vert$ and note
  that by our assumption on $r$, $|\rho| < r$.
  Corollary~\ref{cor:uniform-schwarz-preimage} implies that $A^r$
  contains the round annulus $\{ r< |z|<R \}$, where
  $R\defeq \sqrt{\frac{r}{2-r}}$, and the estimate follows.

  Lastly, (\ref{item:derivative-estimate}) follows directly from
  (\ref{eq_mult}).
\end{proof}

Next we consider sequences $(b_n)$ of Blaschke products. For any such
sequence, we denote by $a_0^n=0, a_1^n, \ldots, a^n_{d_n-1}$ the zeros
of $b_n$, where $d_n\defeq {\rm deg}(b_n)$. Sometimes, it will be
  convenient to think of the domains and ranges of the maps $b_n$ as
  distinct copies of the unit disk $\D^{(n)}$, where
  $b_n:\D^{(n)} \to \D^{(n+1)}$ for any $n\geq 0$. 

\begin{defn}[Uniformly hyperbolic sequence of Blaschke products]
  \label{def_uh}
  We say that a sequence $(b_n)$ of hyperbolic Blaschke products of
  the form \eqref{eq_BP_fix0} is \textit{uniformly hyperbolic} if
  $(b_n) $ has uniformly bounded degree and there exists $0<r<1$ such
  that $|a_k^n|<r$ for every $0\leq k< \deg(b_n)$ and every $n\in\N$.
\end{defn}

\begin{remark}
  Recall that, by definition, hyperbolic Blaschke products are of
  degree at least $2$, and so if $(b_n)$ is uniformly hyperbolic, then
  $2\leq \min_n \deg(b_n)\leq \max_n \deg(b_n)<\infty$.
\end{remark}

As a consequence of Lemma \ref{lem:blaschke-basics}, uniformly hyperbolic
sequences of Blaschke products have the following properties. Recall that for a sequence 
$(b_n)$ of Blaschke products, we denote by 
$B_n:\D^{(0)} \to \D^{(n)}$  the composition of the $n$
first maps; that is, $B_n\defeq b_{n-1}\circ \cdots \circ b_0$,
$n\geq 1$.

\begin{lemma}[Properties of uniformly hyperbolic sequences of Blaschke
  products]
  \label{lem:uh-basics}
  Let $(b_n)$ be a sequence of uniformly hyperbolic Blaschke products
  and let $0<r<1$ be a radius provided by Definition
  \ref{def_uh}. For $n\geq 0$, let
  \begin{equation}\label{eq_An}
    A_n\defeq A^r_n \defeq b^{-1}_n(\D_r)\setminus \D_r  \ \subset \D^{(n)}.
  \end{equation}
  Then, the following hold:
  \begin{enumerate}
  \item \label{item:Bn_to_0} $B_n(z)\to 0$ as $n\to \infty$ for all
    $z\in\D$;
  \item \label{item:An_annulus} For each $n\geq 0$, $A_n$ is an
    annulus, and there exist constants $m, M\in (0,\infty) $,
    independent of $n$, such that $m\leq \Mod A_n \leq M$.
  \item \label{item:A_n_once} For each $z\in \D$, there exists at most
    one $n\in \N$ such that $B_n(z)\in A_n$.
  \end{enumerate}
\end{lemma}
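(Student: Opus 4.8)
The three claims follow by feeding the uniform estimates of Definition~\ref{def_uh} into Lemma~\ref{lem:blaschke-basics} and the uniform Schwarz lemma. For \eqref{item:Bn_to_0}, fix $r$ as in Definition~\ref{def_uh} and recall that each $b_n$ has the form \eqref{eq_BP_fix0} with all zeros in $\D_r$; by \eqref{eq_mult} and Lemma~\ref{lem:blaschke-basics}\eqref{item:derivative-estimate}, $\rho_n \defeq |b_n'(0)| < r^{d_n-1} \le r$. Hence each $b_n \in \F_{\rho_n}$ with $\rho_n \le r$, and Lemma~\ref{lem:uniform-schwarz} gives a uniform contraction estimate on $\overline{\D_r}$: there is $c = c(r) \in (0,1)$ with $|b_n(z)| \le c|z|$ whenever $|z| \le r$. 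The plan is then a two-stage escape-into-$\D_r$-then-contract argument: for fixed $z \in \D$, the classical Schwarz lemma (applied to each single map, which fixes $0$) shows $|B_n(z)|$ is nonincreasing, and one shows it must eventually enter $\overline{\D_r}$ --- otherwise all $B_n(z)$ stay in the annulus $\{r \le |z| < 1\}$, but a Schwarz–Pick argument along the geodesic through $0$ forces a definite drop in hyperbolic (hence Euclidean) modulus at each step as long as one is outside $\D_r$, contradicting boundedness below. Once $|B_N(z)| \le r$, the uniform estimate $|b_n(w)| \le c|w|$ for $|w|\le r$ gives $|B_{N+k}(z)| \le c^k r \to 0$.

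For \eqref{item:An_annulus}: the radius $r$ satisfies $\max_k |a_k^n| < r < 1$ for every $n$, so Lemma~\ref{lem:blaschke-basics} applies to each $b_n$ with this same $r$, and its conclusion \eqref{item:annulus} directly yields
\[
  \frac{1}{4\pi} \log \frac{1}{r(2-r)} \;\le\; \Mod(A_n) \;\le\; \frac{d_n-1}{2\pi d_n} \log \frac{1}{r} \;\le\; \frac{1}{2\pi}\log\frac1r.
\]
Setting $m \defeq \frac{1}{4\pi}\log\frac{1}{r(2-r)} > 0$ and $M \defeq \frac{1}{2\pi}\log\frac1r$ gives the $n$-independent two-sided bound (the lower bound is positive since $r(2-r) < 1$ for $r \in (0,1)$). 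Note we do not even need uniform boundedness of the degrees here, only for the upper bound to be finite, which it already is.

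For \eqref{item:A_n_once}: suppose $B_n(z) \in A_n$ and $B_{n'}(z) \in A_{n'}$ with $n < n'$. By definition $A_n = b_n^{-1}(\D_r) \setminus \D_r$, so $B_n(z) \notin \D_r$ but $b_n(B_n(z)) = B_{n+1}(z) \in \D_r$, i.e.\ $|B_{n+1}(z)| < r$. On the other hand, $B_{n'}(z) \in A_{n'}$ forces $|B_{n'}(z)| \ge r$. But $|B_{n'}(z)| \le |B_{n+1}(z)|$ since the intermediate maps $b_{n+1}, \ldots, b_{n'-1}$ each fix $0$ and hence are non-expanding for the Euclidean modulus by the Schwarz lemma; thus $r \le |B_{n'}(z)| \le |B_{n+1}(z)| < r$, a contradiction. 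So at most one such $n$ exists.

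The only genuinely delicate point is the ``eventually enters $\overline{\D_r}$'' step in part~\eqref{item:Bn_to_0}: one must quantify that being in the annulus $\{r \le |w| < 1\}$ forces a uniform-in-$w$ amount of hyperbolic contraction under any $b_n$. This is where uniform hyperbolicity is used essentially: since $\rho_n \le r$, Lemma~\ref{lem:uniform-schwarz} gives $|b_n(w)| \le c(r)|w|$ on $\overline{\D_r}$ with $c(r) < 1$; combined with monotonicity of $|B_n(z)|$, either the orbit is eventually trapped in $\overline{\D_r}$ (and then converges geometrically to $0$ by the displayed estimate), or it stays in $\{r \le |w| < 1\}$ forever --- but then applying the estimate "just after" any return below $r$ is impossible, and one argues instead via the hyperbolic metric: each $b_n$ restricted to $\D \setminus \overline{\D_r}$ strictly decreases $d_\D(0,\cdot)$ by at least a fixed amount $\delta(r) > 0$ (by Schwarz–Pick strictness, uniform because the derivative of $z \mapsto d_\D(0, b_n(z))$ along the radial geodesic is bounded away from $1$ on the compact set $\{|w| = r\}$ using that $b_n$ is a proper map of degree $\ge 2$ fixing $0$), contradicting $d_\D(0, B_n(z)) \ge d_\D(0,r)$ for all $n$. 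I would present the geometric-contraction version as it is cleanest and avoids case analysis.
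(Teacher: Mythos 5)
Items \eqref{item:An_annulus} and \eqref{item:A_n_once} are handled essentially as in the paper: \eqref{item:An_annulus} is read off directly from Lemma~\ref{lem:blaschke-basics}\eqref{item:annulus} with the fixed radius $r$ (the paper just says this; your explicit bounds are correct, and your aside about not needing the degree bound for the upper estimate is right), and \eqref{item:A_n_once} uses the same key fact that $B_{n+1}(z)\in\D_r$ and $\D_r$ is forward-invariant --- you phrase it via monotonicity of $|B_k(z)|$, the paper via $b_j(\D_r)\subset\D_r$, but these are the same observation.

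For item \eqref{item:Bn_to_0} you diverge from the paper. The paper simply records $\rho_n := |b_n'(0)| \le r^{d_n-1} \le r$, so $\sum_n(1-\rho_n)=\infty$, and cites the general convergence criterion \cite[Theorem~2.1]{befrs1}. You instead attempt a self-contained two-stage ``escape into $\overline{\D_r}$, then contract geometrically'' argument. The second stage is fine. The first stage, however, is the weak point: the claim that each $b_n$ decreases $d_\D(0,\cdot)$ by a fixed amount $\delta(r)>0$ on $\D\setminus\overline{\D_r}$ is not justified by looking only at the circle $\{|w|=r\}$, and making the uniformity in $n$ rigorous would require a precompactness/normality argument for the family $(b_n)$ of the kind used in Theorem~\ref{thm:uniform-hyp-characterization} --- more machinery than the lemma warrants. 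The cleaner and in fact one-stage fix is already inside Lemma~\ref{lem:uniform-schwarz}: apply it not with radius $r$ but with radius $s\defeq\max(|z|,r)$. Since $|B_n(z)|$ is nonincreasing by Schwarz, every $B_n(z)$ lies in $\overline{\D_s}$, and since $\rho_n\le r\le s$, Lemma~\ref{lem:uniform-schwarz} gives $|B_{n+1}(z)| \le \frac{r+s}{1+rs}\,|B_n(z)|$ for every $n$, with a fixed constant $\frac{r+s}{1+rs}<1$. This yields $|B_n(z)|\le \left(\frac{r+s}{1+rs}\right)^n|z|\to 0$ directly, with no case distinction and no hidden compactness. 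So your plan is correct in spirit but should drop the escape argument and use the uniform Schwarz lemma at the correct scale.
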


\begin{proof}
  Note that, for each $n$, $\rho_n\defeq|b_n'(0)|\leq r^{d_n-1}$ by
  Lemma \ref{lem:blaschke-basics}. Hence, $\sum_n (1-\rho_n)=\infty$ and so
  \eqref{item:Bn_to_0} holds, see \cite[Theorem 2.1]{befrs1}. Item
  \eqref{item:An_annulus} is a direct consequence of
  Lemma~\ref{lem:blaschke-basics}.  To see \eqref{item:A_n_once}, fix
  $z \in \D$ and assume that $B_n(z) \in A_n$. Then
  $B_{n+1}(z) \in b_n(A_n) \subset \D_r$. By the Schwarz lemma,
  $b_j(\D_r)\subset \D_r$ for every $j\geq 0$, hence
  $B_{n+k}(z) \in \D_r$ for all $k \ge 1$. Since
  $\D_r \cap A_n = \emptyset$, this establishes \eqref{item:A_n_once}.
\end{proof}

A priori, it is not obvious that uniform hyperbolicity is preserved
under conformal equivalence, but the following theorem provides alternative characterizations, which are more convenient in this respect.
\begin{thm}[Characterization of uniformly hyperbolic sequences of
  Blaschke products]
  \label{thm:uniform-hyp-characterization}
  Let $(b_n)$ be a sequence of Blaschke products of uniformly bounded
  degree fixing $0$. Then the following are equivalent:
  \begin{enumerate}[\rm (a)]
  \item $(b_n)$ is uniformly hyperbolic.
    \label{cond:uniform-hyp}
  \item There exists $s \in (0,1)$ such that $\Crit(b_n) \subset \D_s$
    for all $n$. \label{cond:crit-pts-1}
  \item Given any orbit $(z_n)$, there exists a constant $C$
    such that $d_{\D^{(n)}}(z_n, c) \le C$ for all $c\in \Crit(b_n)$.
    \label{cond:crit-pts-2}
  \item \label{cond:crit-pts-3}There exists an orbit
    $(z_n)$ and a constant $C$ such that $d_{\D^{(n)}}(z_n, c) \le C$ for all
    $c\in \Crit(b_n)$.
  \end{enumerate}
\end{thm}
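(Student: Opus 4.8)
The plan is to prove the cycle of implications (a)$\Rightarrow$(b)$\Rightarrow$(c)$\Rightarrow$(d)$\Rightarrow$(a), using the fact that the $b_n$ have uniformly bounded degree throughout. For (a)$\Rightarrow$(b): if $(b_n)$ is uniformly hyperbolic with bound $r$ on the moduli of all zeros, then by the Walsh theorem invoked in Lemma~\ref{lem:blaschke-basics} the critical points lie in the hyperbolic convex hull of $\{a_0^n,\dots,a_{d_n-1}^n\}\subset\D_r$, and since $\D_r$ is hyperbolically convex this hull is contained in $\overline{\D_r}$; so take $s=r$. The implication (c)$\Rightarrow$(d) is trivial once one observes that an orbit exists — e.g. the constant orbit $z_n\equiv 0$, since each $b_n$ fixes $0$.

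For (b)$\Rightarrow$(c): fix an orbit $(z_n)$ and let $c\in\Crit(b_n)$, so $|c|<s$, hence $d_{\D^{(n)}}(0,c)\le d_{\D}(0,s)=:C_0$. It therefore suffices to bound $d_{\D^{(n)}}(z_n,0)$ independently of $n$. This is where the hyperbolic-contraction estimate enters: each $b_n$ fixes $0$, and by the Schwarz–Pick lemma $d_{\D^{(n+1)}}(b_n(z),0)\le d_{\D^{(n)}}(z,0)$, so the sequence $d_{\D^{(n)}}(z_n,0)$ is nonincreasing in $n$ and hence bounded by $d_{\D^{(0)}}(z_0,0)$. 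Thus $d_{\D^{(n)}}(z_n,c)\le d_{\D^{(0)}}(z_0,0)+C_0$ for all $n$, giving (c).

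The substantive implication is (d)$\Rightarrow$(a), and this is where I expect the main difficulty. From (d) we have an orbit $(z_n)$ with $d_{\D^{(n)}}(z_n,c)\le C$ for every critical point $c$; I must produce a uniform $r<1$ with $|a_k^n|<r$ for all $n,k$. First, I would reduce the problem to controlling the critical points rather than the zeros: by the (easy direction of the) Walsh-type statement, or directly, the zeros of a degree-$d$ Blaschke product fixing $0$ are controlled by its critical points — more precisely, since $b_n$ has $d_n-1$ critical points in $\D$ (counted with multiplicity) by the Riemann–Hurwitz formula, and the $a_k^n$ are the zeros, one can bound $\max_k|a_k^n|$ in terms of $\max_{c\in\Crit(b_n)}|c|$ and $d_n$; combined with the uniform degree bound this converts a uniform bound on critical points into the uniform bound on zeros required by Definition~\ref{def_uh}. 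So it remains to bound the critical points uniformly away from $\partial\D$, i.e.\ to pass from the \emph{hyperbolic} bound $d_{\D^{(n)}}(z_n,c)\le C$ to a \emph{Euclidean} bound $|c|<r$. The obstruction is that $z_n$ itself could a priori escape to $\partial\D$, in which case a ball of fixed hyperbolic radius around $z_n$ need not stay in a fixed $\D_r$. To rule this out, I would use that $b_n$ fixes $0$ together with the presence of a critical point at bounded hyperbolic distance: if $z_n$ were hyperbolically far from $0$, then every critical point of $b_n$ would also be hyperbolically far from $0$, so all zeros $a_k^n$ would be close to $\partial\D$, forcing $|b_n'(0)|=\prod_k|a_k^n|$ to be close to $1$; but then by the Uniform Schwarz Lemma (Lemma~\ref{lem:uniform-schwarz}) applied along the composition, $d_{\D^{(n)}}(z_n,0)=d_{\D^{(n)}}(B_n(z_0),0)$ would be forced down — one shows $\sum_n(1-|b_n'(0)|)$ diverges unless infinitely many multipliers are bounded away from $1$, and controls things case by case. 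Concretely, I would argue: the hyperbolic distance from $z_n$ to $0$ is nonincreasing (as above), and on the other hand $d_{\D^{(n)}}(0,c)\ge d_{\D^{(n)}}(z_n,c)-d_{\D^{(n)}}(z_n,0)$… no — rather $d_{\D^{(n)}}(0,c)\le d_{\D^{(n)}}(0,z_n)+C\le d_{\D^{(0)}}(0,z_0)+C=:C'$, so in fact \emph{every} critical point satisfies $|c|\le\tanh(C')<1$ with $r:=\tanh(C')$ independent of $n$; this is immediate and there is no escape problem after all, because the Schwarz–Pick monotonicity pins $z_n$ at bounded hyperbolic distance from the fixed point $0$. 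The one point needing care is the Riemann–Hurwitz / zero-versus-critical-point bookkeeping that turns "$\Crit(b_n)\subset\D_r$ for all $n$" back into condition (a) as literally stated; this is a finite computation for each fixed degree, made uniform by the degree bound, and is the step I would write out most carefully.
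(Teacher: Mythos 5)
Your overall organization is sound and the easy implications match the paper's argument: (a)$\Rightarrow$(b) via Walsh's theorem and hyperbolic convexity of $\D_r$, and the chain (b)$\Rightarrow$(c)$\Rightarrow$(d) via the Schwarz--Pick monotonicity of $d_{\D^{(n)}}(z_n,0)$ and the triangle inequality. Your (d)$\Rightarrow$(b) reduction --- observing that $d_{\D^{(n)}}(0,c)\le d_{\D^{(n)}}(0,z_n)+C\le d_{\D^{(0)}}(0,z_0)+C$ pins all critical points inside a fixed $\D_r$ --- is also correct and is essentially how the paper gets its equivalences among (b), (c), (d).

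The genuine gap is in what you call the ``Riemann--Hurwitz / zero-versus-critical-point bookkeeping,'' i.e.\ the implication (b)$\Rightarrow$(a). You assert that ``one can bound $\max_k|a_k^n|$ in terms of $\max_{c\in\Crit(b_n)}|c|$ and $d_n$'' and that this is ``a finite computation for each fixed degree.'' This is not a computation in any elementary sense, and you give no argument for it; for degree $2$ there is an explicit formula, but for general $d$ the dependence of zeros on critical points is only implicit. The paper proves (b)$\Rightarrow$(a) by a compactness/normal-families argument: assume (b) holds but (a) fails, pass to a subsequence with constant degree $d$ along which the critical points converge to $c_k\in\overline{\D_s}$, the zeros converge to $a_k\in\overline{\D}$, and at least one $|a_k|=1$; then $b_n\to b_\infty$ locally uniformly where each factor with $|a_k|=1$ degenerates to the unimodular constant $-a_k$, so $\deg b_\infty\le d-1$; but by Hurwitz's theorem $b_\infty$ inherits $d-1$ critical points in $\D$, forcing $\deg b_\infty=d$, a contradiction. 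You would need this argument (or an equivalent properness/compactness argument for the critical-point map $(a_1,\dots,a_{d-1})\mapsto\{c_1,\dots,c_{d-1}\}$) to close your cycle; simply invoking Riemann--Hurwitz does not supply the required quantitative control of zeros in terms of critical points.
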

\begin{proof}
  Equivalence of (\ref{cond:crit-pts-1}) and (\ref{cond:crit-pts-2})
  is immediate from the fact that $|z_n| \le |z_0|$ for all $n$ by the
  Schwarz lemma. The equivalence between (\ref{cond:crit-pts-2}) and
  (\ref{cond:crit-pts-3}) follows from the fact that the hyperbolic
  distance between orbits is a decreasing function of $n$.
  
  Condition (\ref{cond:uniform-hyp}) implies (\ref{cond:crit-pts-1})
  since the critical points of $b_n$ are contained in the hyperbolic
  convex hull of its zeros, see Lemma~\ref{lem:blaschke-basics}. It
  remains to prove that (\ref{cond:crit-pts-1}) implies
  ~(\ref{cond:uniform-hyp}). We will prove this claim by
  contradiction. Assume that there exists a constant $s \in (0,1)$
  such for all $n$, $\Crit(b_n)\subset \D_s$, but that the sequence
  $(b_n)$ is not uniformly hyperbolic. Let
  $c_1^n, \ldots, c_{d_n-1}^n$ denote the critical points of $b_n$
  (not necessarily distinct) while, as usual, we denote the zeros of
  $b_n$ by $a_0^n, a_1^n, \ldots, a^n_{d_n-1}$.  By passing to a
  subsequence, we may assume that
  \begin{itemize}
  \item the sequence $(b_n)$ has constant degree $d \ge 2$,
  \item the limits $\lim\limits_{n\to\infty} c_k^n = c_k$
    exist for all $k=1, \ldots, d-1$, with $|c_k| \le s$,
  \item the limits $\lim\limits_{n\to\infty} a_k^n = a_k$
    exists for all $k=1, \ldots, d-1$, with $|a_k| \le 1$, and
  \item $|a_k| = 1$ for at least one index $k$.
  \end{itemize}
  Then
  \[
    b_n(z) = z \prod_{k=1}^{d-1} \frac{z-a_k^n}{1-\overline{a_k^n}z}
    \quad \longrightarrow \quad
    b_\infty(z) = z \prod_{k=1}^{d-1} \frac{z-a_k}{1-\overline{a_k}z}
    \qquad \text{ for } n \to \infty,
  \]
  locally uniformly in $\D$. For any $k$ for which $|a_k|=1$, the
  factor $\frac{z-a_k}{1-\overline{a_k}z}$ is a constant equal to
  $-a_k$, so $b_\infty$ is a Blaschke product of degree
  $1 \le d_\infty \le d-1$. However, by Hurwitz's theorem, $b_\infty$
  has $d-1$ critical points $c_1, \ldots, c_{d-1}$ in the unit disk,
  which implies that the degree of $b_\infty$ is $d$, a contradiction.
\end{proof}
  
Using the characterization of uniformly hyperbolic Blaschke products
provided by
\Cref{thm:uniform-hyp-characterization}\eqref{cond:crit-pts-2},
we deduce that uniform hyperbolicity is preserved under quasiconformal
equivalence.

\begin{cor}[Uniform hyperbolicity is a quasiconformal invariant]
  Let $(b_n)$ and $(\widetilde{b}_n)$ be two sequences of Blaschke
  products fixing zero that are quasiconformally equivalent. Then
  $(b_n)$ is uniformly hyperbolic if and only if $(\widetilde{b}_n)$ is
  uniformly hyperbolic.
\end{cor}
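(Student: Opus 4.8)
The plan is to reduce this to the characterization in \Cref{thm:uniform-hyp-characterization}, specifically the equivalence of conditions \eqref{cond:uniform-hyp} and \eqref{cond:crit-pts-2}, which expresses uniform hyperbolicity in a form that behaves well under conjugation. Since the relation ``quasiconformally equivalent'' is symmetric, it suffices to prove one implication: assume $(b_n)$ is uniformly hyperbolic and that $(\widetilde b_n)$ is quasiconformally equivalent to $(b_n)$ via a sequence of $K$-quasiconformal homeomorphisms $h_n:\D^{(n)}\to\D^{(n)}$ satisfying $\widetilde b_n = h_{n+1}\circ b_n\circ h_n^{-1}$; we must show $(\widetilde b_n)$ is uniformly hyperbolic. (Uniform boundedness of the degrees is immediate since $\deg(\widetilde b_n)=\deg(b_n)$, as topological equivalence preserves the degree of a proper map.)

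First I would fix an orbit: pick $z_0\in\D$ and set $z_n\defeq B_n(z_0)$, so $(z_n)$ is a $(b_n)$-orbit. Since $(b_n)$ is uniformly hyperbolic, \Cref{thm:uniform-hyp-characterization}\eqref{cond:crit-pts-2} gives a constant $C$ with $d_{\D^{(n)}}(z_n,c)\le C$ for all $c\in\Crit(b_n)$ and all $n$. Because homeomorphisms conjugating the sequences carry critical points to critical points and orbits to orbits (the Remark following \Cref{def:top_equiv}), the sequence $(h_n(z_n))$ is a $(\widetilde b_n)$-orbit, and $h_n(\Crit(b_n))=\Crit(\widetilde b_n)$. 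The key point is that $K$-quasiconformal self-maps of $\D$ are uniformly quasi-isometries in the hyperbolic metric: there is a constant $L=L(K)\ge 1$ such that $d_\D(h_n(z),h_n(w))\le L\,d_\D(z,w)+L$ for all $z,w\in\D$ and all $n$ (this is a standard consequence of the distortion theory of quasiconformal maps on the disk — e.g.\ via Mori's theorem or the fact that qc maps are quasi-isometries of hyperbolic space, uniformly in the dilatation). Applying this with $z=z_n$ and $w=c\in\Crit(b_n)$ yields $d_{\D^{(n)}}(h_n(z_n),\widetilde c)\le LC+L$ for every $\widetilde c\in\Crit(\widetilde b_n)$ and every $n$.

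This is exactly condition \eqref{cond:crit-pts-3} of \Cref{thm:uniform-hyp-characterization} applied to $(\widetilde b_n)$, with orbit $(h_n(z_n))$ and constant $LC+L$. By that theorem, $(\widetilde b_n)$ is uniformly hyperbolic, completing the argument. I expect the only genuine subtlety to be the uniform hyperbolic quasi-isometry estimate for $K$-quasiconformal self-maps of the disk with constants depending only on $K$; everything else is bookkeeping with the already-established characterization and the invariance of orbits and critical sets under topological equivalence. If one prefers to avoid citing the quasi-isometry fact directly, an alternative is to argue via condition \eqref{cond:crit-pts-1}: the $h_n$ are equicontinuous on compact subsets of $\D$ by the uniform Hölder (Mori) estimate, so the image of the compact set $\overline{\D_s}\supset\Crit(b_n)$ under $h_n$ stays in a fixed compact subset of $\D$ — but some care is needed because the $h_n$ need not fix $0$, so passing through the orbit formulation as above is cleaner.
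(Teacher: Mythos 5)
Your proposal is correct and follows essentially the same route as the paper: reduce to the orbit-based characterization of \Cref{thm:uniform-hyp-characterization}, use the invariance of orbits and critical points under the conjugating homeomorphisms, and invoke a uniform hyperbolic distortion bound for $K$-quasiconformal self-maps of $\D$ (the paper cites equicontinuity with respect to the hyperbolic metric, which is the local form of the quasi-isometry estimate you use). The only cosmetic difference is that you pass through condition \eqref{cond:crit-pts-3} for a single chosen orbit, whereas the paper appeals to \eqref{cond:crit-pts-2} directly; these are equivalent by the same theorem.
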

\begin{proof}
  Note that it suffices to prove one direction, since the inverse of a
  $K$-quasiconformal map is $K$-quasiconformal. Suppose $(b_n)$ is
  uniformly hyperbolic. By assumption, there exists $K\geq 1$ and a
  sequence of $K$-quasiconformal maps $(h_n)$ such that
  $b_n = h_{n+1} \circ \widetilde{b}_n \circ h_n^{-1}$ for all
  $n$. Since $\Crit(\widetilde{b}_n)=h_n(\Crit(b_n))$ for all $n$ and
  the family of $K$-quasiconformal maps is equicontinuous on
  $\mathbb{D}$ with respect to the hyperbolic metric (see, for
  example, \cite[Theorem~4.4.1]{hubbard}), it follows from
  \Cref{thm:uniform-hyp-characterization}(\ref{cond:crit-pts-2})
  that $(\widetilde{b}_n)$ is uniformly hyperbolic.
\end{proof}

We conclude the section with the following theorem, which plays a
crucial role in the proof of \thmCref.  Roughly speaking, it tells us
that any two sequences of uniformly hyperbolic Blaschke products are
$K-$quasiconformally equivalent near the boundary of the unit disk, or
more precisely, in the sequence of annuli
$\D^{(n)}\setminus (\D_r \cup A_n)$, where the constant $K$ is
independent of $n$.  In the statement and proof of the following
theorem, we again use the notations $\D^{(n)}$ and
$\widetilde{\D}^{(n)}$ to denote different copies of $\D$.

\begin{thm}[$K$-quasiconformal equivalence of uniformly hyperbolic
  Blaschke products] \label{thm:gluingmap} Let
  $(b_n \colon \D^{(n)}\to \D^{(n+1)})$ and
  $(\widetilde{b}_n\colon \widetilde{\D}^{(n)}\to
  \widetilde{\D}^{(n+1)})$ be two sequences of uniformly hyperbolic
  Blaschke products with
  $\deg (b_n) =\deg(\widetilde{b}_n) \eqdef d_n$. Then there exists
  $r \in (0,1)$, $K\ge 1$ and a sequence of $K$-quasiconformal maps
  $h_n\colon \D^{(n)} \to \widetilde{\D}^{(n)}$ such that, for each
  $n\in \N$,
  \begin{enumerate}[\rm (a)]
  \item \label{item:h_n_id} $h_n(z)= z$ for $\vert z\vert\leq r$;
  \item \label{item:h_n_equiv}
    $b_n =h_{n+1}^{-1} \circ \widetilde{b}_n \circ h_n$ on the annulus
    $C_n\defeq \D^{(n)}\setminus b^{-1}_n(\D_r)$.
  \end{enumerate}
\end{thm}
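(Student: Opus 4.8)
The plan is to build the maps $h_n$ in two stages: first define them on the boundary annuli $C_n = \D^{(n)}\setminus b_n^{-1}(\D_r)$ so that the conjugacy relation \eqref{item:h_n_equiv} holds there and so that they agree with the identity on $\s_r$, and then extend across the region $\D_r\cup A_n$ by the identity, using an interpolation theorem on the annulus $A_n$ to glue the two pieces with uniformly controlled dilatation. Throughout, $r\in(0,1)$ will be chosen larger than the common bound on the moduli of the zeros provided by Definition~\ref{def_uh} for \emph{both} sequences, so that Lemma~\ref{lem:blaschke-basics} and Lemma~\ref{lem:uh-basics} apply uniformly to $(b_n)$ and $(\widetilde b_n)$; in particular $D_r^{(n)}\defeq b_n^{-1}(\D_r)$ and $\widetilde D_r^{(n)}\defeq \widetilde b_n^{-1}(\D_r)$ are analytic Jordan domains compactly containing $\overline{\D_r}$, with complementary annuli $\D^{(n+1)}\setminus\overline{\D_r}$ of modulus $-\tfrac1{2\pi}\log r$ and $\D^{(n)}\setminus\overline{D_r^{(n)}}$ of modulus $-\tfrac1{2\pi d_n}\log r$, and similarly for the tildes.

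First I would construct $h_n$ on the outer annulus $C_n$. On $\s_r$ we want $h_{n+1}|_{\s_r}=\mathrm{id}$; since $b_n\colon C_n\to\D^{(n+1)}\setminus\overline{\D_r}$ and $\widetilde b_n\colon \widetilde C_n\to\widetilde\D^{(n+1)}\setminus\overline{\D_r}$ are both unbranched coverings of degree $d_n$ (no critical values outside $\D_r$, by the choice of $r$), the relation $b_n = h_{n+1}^{-1}\circ\widetilde b_n\circ h_n$ forces us to define $h_n$ on $C_n$ by lifting. Concretely, one starts from $h_0|_{\s_r}=\mathrm{id}$ and proceeds: given $h_{n+1}$ already defined on $\D^{(n+1)}\setminus\overline{\D_r}$ as a quasiconformal map to $\widetilde\D^{(n+1)}\setminus\overline{\D_r}$ equal to the identity on $\s_r$, set $h_n = \widetilde b_n^{-1}\circ h_{n+1}\circ b_n$ on $C_n$, choosing the branch of $\widetilde b_n^{-1}$ that matches up boundary behaviour on $\partial D_r^{(n)}$; this determines $h_n$ on $C_n$ unambiguously and makes \eqref{item:h_n_equiv} hold by construction. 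The subtlety is that this recursion runs the wrong way (it defines $h_n$ from $h_{n+1}$), so instead I would set it up as a direct formula: iterating, $h_n|_{C_n}$ is a lift of $h_m$ for any $m>n$ with $h_m$ determined on $\s_r$ by the identity, and because $b_n$ and $\widetilde b_n$ restricted to these annuli are conformal off a finite branch set, each $h_n|_{C_n}$ is in fact conformal on $C_n$ away from where it needs to be adjusted on $\partial D_r^{(n)}$ vs $\partial \widetilde D_r^{(n)}$. The honest statement is: $h_n$ extends continuously to $\partial D_r^{(n)}$ as an $H$-quasisymmetric homeomorphism onto $\partial\widetilde D_r^{(n)}$, with $H$ independent of $n$ --- this is where Corollary~\ref{cor:conformal-qs} enters, applied to the conformal parametrizations of the analytic Jordan curves $\partial D_r^{(n)}$ (which are uniformly ``fat'' because they lift the round circle $\s_r$ under maps of bounded degree with critical points confined to $\D_r$).

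Second, I would extend $h_n$ from $C_n\cup\overline{\D_r}$ (where it is so far the conformal lift on $C_n$ and the identity on $\overline{\D_r}$) across the annulus $A_n = D_r^{(n)}\setminus\overline{\D_r}$. On the inner boundary $\s_r$ we use $\mathrm{id}$; on the outer boundary $\partial D_r^{(n)}$ we use the quasisymmetric homeomorphism onto $\partial\widetilde D_r^{(n)}$ just obtained. By Lemma~\ref{lem:uh-basics}\eqref{item:An_annulus} both $A_n$ and $\widetilde A_n$ have moduli bounded above and below by constants independent of $n$, so the ratio of moduli is bounded; hence Theorem~\ref{lem:qc-interpolation} (interpolation between boundary maps of annuli), applied with the round model annulus $\A_r$ mapped to $\widetilde A_n$, yields a $K$-quasiconformal homeomorphism $\overline{A_n}\to\overline{\widetilde A_n}$ with the prescribed boundary values and $K=K(H,K_0,M)$ independent of $n$. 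Pasting this interpolation together with the identity on $\D_r$ and the conformal lift on $C_n$ gives the desired $h_n\colon\D^{(n)}\to\widetilde\D^{(n)}$, which is $K$-quasiconformal (the gluing is along the analytic curves $\s_r$ and $\partial D_r^{(n)}$, where quasiconformality is preserved by the removability of real-analytic arcs), satisfies \eqref{item:h_n_id} by construction, and satisfies \eqref{item:h_n_equiv} because the conjugacy relation was built into the definition of $h_n$ on $C_n$.

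The main obstacle is the uniform quasisymmetry of the boundary maps $h_n|_{\partial D_r^{(n)}}\colon\partial D_r^{(n)}\to\partial\widetilde D_r^{(n)}$. Each such map is obtained as a composition of a branch of $b_n$ (conformal near $\partial D_r^{(n)}$), the already-built boundary map at level $n+1$, and a branch of $\widetilde b_n^{-1}$; to keep $H$ from degenerating as $n\to\infty$ one must know that $\partial D_r^{(n)}$ and $\partial\widetilde D_r^{(n)}$ are uniform quasicircles and that the conformal extension of $b_n$ to a fixed annular neighborhood of $\partial D_r^{(n)}$ has uniformly bounded distortion. Both follow from the uniform separation of the critical values of $b_n$ from $\s_r$ (they lie in $\D_{\max_k|a_k^n|}\subset\D_r$ with a uniform gap), which gives a fixed-size annulus $\A_{rt,r/t}$ in $\D^{(n+1)}$ on which one can conformally pull $b_n$ back, so Corollary~\ref{cor:conformal-qs} applies with $H=H(t)$. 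Once this uniform quasisymmetry is in hand, everything else is a routine assembly using Theorem~\ref{lem:qc-interpolation} and the bounded-modulus estimate of Lemma~\ref{lem:uh-basics}.
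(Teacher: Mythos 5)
Your proposal has the right overall architecture and matches the skeleton of the paper's proof: decompose each $\D^{(n)}$ into $\D_r$, the annulus $A_n$, and the outer region $C_n$; take $h_n$ to be the identity on $\overline{\D_r}$; define $h_n$ on $A_n$ by interpolating between its boundary restrictions using Theorem~\ref{lem:qc-interpolation}; obtain the uniform quasisymmetry of the outer boundary map from Corollary~\ref{cor:conformal-qs}; and define $h_n$ on $C_n$ by lifting so that the conjugacy relation holds. However, you omit the preliminary reduction to the case $b_n(z)=z^{d_n}$, and without it the key lemmas you invoke do not apply. After that reduction $\partial D_r^{(n)}=b_n^{-1}(\s_r)$ is the \emph{round} circle $\s_{r_n}$ with $r_n=r^{1/d_n}$, and $A_n=\A_{r,r_n}$ is a \emph{round} annulus. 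Both facts are used in an essential way: Corollary~\ref{cor:conformal-qs} gives uniform quasisymmetry only for the restriction of a conformal embedding of a fixed-modulus \emph{round} annulus $\A_{st,s/t}$ to the \emph{round} circle $\s_s$, and Theorem~\ref{lem:qc-interpolation} requires the source annulus to be round. In your untamed setting $A_n$ and $\partial D_r^{(n)}$ are general analytic Jordan domains and curves, so neither result applies as stated. Your remark about ``the conformal parametrizations of the analytic Jordan curves $\partial D_r^{(n)}$'' gestures at the extra uniformization one would need to do, but that is precisely the work the monomial reduction is designed to avoid, and you never carry it out. (The reduction itself is harmless: conjugating $(z^{d_n})$ to $(b_n)$ and $(z^{d_n})$ to $(\widetilde b_n)$, both by maps equal to the identity on $\D_r$, and composing, produces the desired conjugacy between $(b_n)$ and $(\widetilde b_n)$ with properties (a) and (b) intact.)

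Two smaller issues. First, the stated order of construction --- define on $C_n$ first, then interpolate on $A_n$ --- is circular: the values of $h_n$ on the lifted annulus $b_n^{-1}(A_{n+1})\subset C_n$ are obtained by pulling back $h_{n+1}|_{A_{n+1}}$, which is the interpolation at level $n+1$. The correct order, as in the paper, is to fix $r$, set $h_n=\mathrm{id}$ on $\overline{\D_r}$, determine $h_n$ on the outer boundary of $A_n$ by a one-step lift of the identity, interpolate on $A_n$ for \emph{all} $n$, and only then propagate inward through the $C_n$'s by successive lifting, with Lemma~\ref{lem:uh-basics}\eqref{item:A_n_once} guaranteeing that the lifted annuli are disjoint and exhaust $C_n$. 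You acknowledge the circularity in a parenthetical but do not resolve it. Second, your claim that ``$h_n|_{C_n}$ is in fact conformal on $C_n$'' is false: for $z\in C_n$ the value $h_n(z)$ arises from lifting through some quasiconformal interpolation $h_j|_{A_j}$ with $j>n$, so $h_n|_{C_n}$ is in general only quasiconformal (with the same uniform constant). This does not change the conclusion, but the reasoning by which you reach the uniform quasisymmetry of the boundary map should be cleanly separated from any unfounded claim that the interior map on $C_n$ is conformal.
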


\begin{proof}
  It suffices to show that the theorem holds when the maps $b_n$ are
  of the form $b_n(z) =z^{d_n}$, with
  $2\leq d_n\leq \max_n d_n\eqdef d<\infty$. Let us assume $(b_n)$ is
  of such form, and let $(\widetilde{b}_n)$ be any other sequence of
  uniformly hyperbolic Blaschke products as in the statement. For an
  illustration of the construction in the following proof, see
  Figure~\ref{fig:qc_equiv}.

  By Lemma~\ref{lem:blaschke-basics}, there exists $r \in (0,1)$ such
  that all zeros, critical points, and critical values of all $b_n$
  and $\widetilde{b}_n$ are contained in $\D_{r^2}$. Note that the
  preimage of the circle $\s_r$ under $b_n$ is the circle $\s_{r_n}$
  with radius $r_n \defeq r^{1/d_n}$, and similarly, the preimage of
  the circle $\s_{r^2}$ under $b_n$ is the circle $\s_{r_n^2}$. We
  then have the uniform bounds
  \begin{equation}
    \label{eq:uniform-bounds}
    0< r \le r^{1/2} \le r_n \le r^{1/d} < 1.
  \end{equation}
  \begin{figure}[t]
  	\centering \def\svgwidth{0.6\linewidth}
  	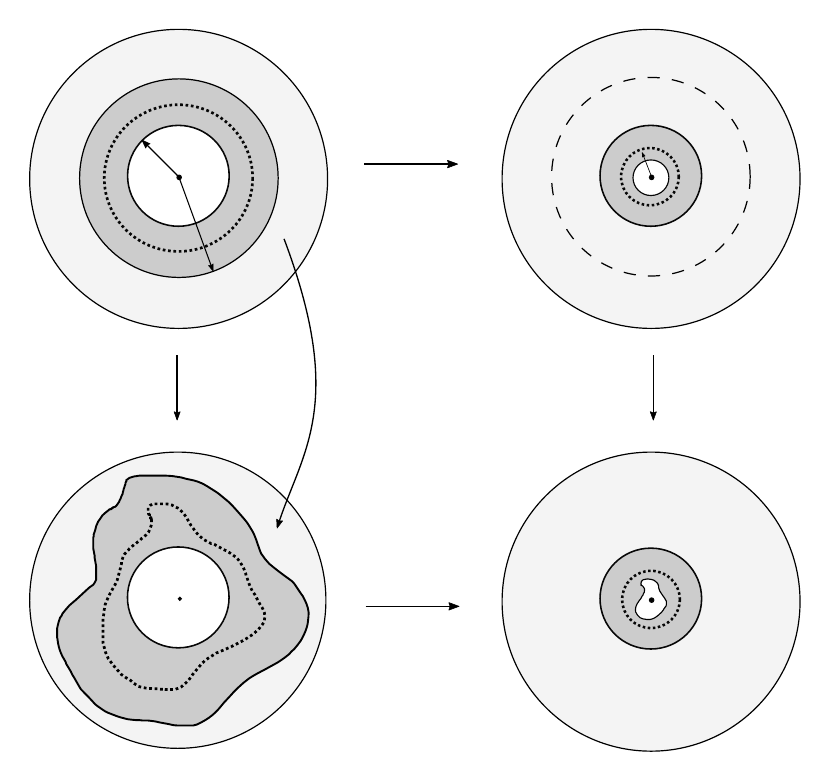
  	\caption{Illustration of the sets and maps involved in the proof
          of \Cref{thm:gluingmap}. The annulus $A_n$ and its images
          under $h_n$ and $\widetilde{b}_n$ are depicted in dark gray,
          while $C_n$ and its images are shown in light gray. The map
          $\psi_n$ sends $\A_{r^{2/d_n}}$, whose inner boundary is
          drawn with dotted lines, conformally to
          $\widetilde{b}_n^{-1}(\A_{r^2})$.}\label{fig:qc_equiv}
  \end{figure}
  With our assumption on the form of $b_n$, we have that for all $n \ge 0$,
  \[
    A_n\defeq  b_n^{-1}(\D_r) \setminus \D_r = \s_r
    \cup \A_{r, r_n},  \quad \text{and} \quad C_n = \A_{r_n}\cup \s_{r_n}.
  \]
  Likewise, let
    $\widetilde{A}_n\defeq \widetilde{b}_n^{-1}(\D_r) \setminus \D_r$,
    $\widetilde{C}_n\defeq \widetilde{\D}^{(n)}\setminus \widetilde{b}_n^{-1}(\D_r)$,  and observe that with this notation, the domain and codomain of $h_n$ can be written as the unions
    	 $$\D^{(n)}= \overline{\D}_r \cup \interior(A_n)\cup \s_{r_n}\cup \interior(C_n)  \quad \text{ and } \quad \widetilde{\D}^{(n)}=\overline{\D}_r \cup \interior(\widetilde{A}_n)\cup \widetilde{b}_n^{-1}(\s_r)\cup \interior(\widetilde{C}_n).$$
Then, for all $n\geq 0$, we define the restriction of $h_n$ to each of those subsets of $\D^{(n)}$ as follows:

\begin{enumerate}[\rm (1)]
\item \label{item_3.6_id} $h_n\colon \overline{\D}_r \to \overline{\D}_r$ is defined as the identity.

\item
\label{item_3.6_boundary} $h_n:\s_{r_n} \to \widetilde{b}_n^{-1}(\s_r)$ is defined as one of the $d_n$ maps satisfying the functional equation  
\begin{equation}\label{eq_hn}
	\widetilde{b}_n \circ h_n(z) = h_{n+1} \circ b_n(z) \left(= b_n(z) \right). 
\end{equation}
The procedure to define such a map is standard: if we choose the image of one point, say $h_n(r_n)$, to be one of the $d$ points in  $\widetilde{b}_n^{-1}(b_n(r_n))$, the map $h_n$ is defined in the whole of $\s_{r_n}$ by lifting (\ref{eq_hn}). 

\item
\label{item_3.6_qc}
{$h_n: \interior(A_n) \to \interior(\widetilde{A}_n)$} is a $K-$quasiconformal map defined by interpolation between its boundary maps defined in \eqref{item_3.6_id} and \eqref{item_3.6_boundary}, $h_n\vert_{\s_{r_n}}$ and $h_n\vert_{\s_r}$, using \Cref{lem:qc-interpolation}. 
\begin{Claim} The constant $K$ can be chosen independent of $n$.
\end{Claim}
\begin{subproof}
  To see this, we must check that the constants $H$, $K_0$, and $M$
  appearing in the assumptions of \Cref{lem:qc-interpolation} can be
  chosen uniformly, i.e., independent of $n$. From the uniform bounds
  \eqref{eq:uniform-bounds}, the annuli $\A_{r,r_n}$ have moduli
  uniformly bounded above and below, and by Lemma
  \ref{lem:blaschke-basics}, the same is true for the annuli
  $\widetilde{A}_n$. It remains to show that the inner and outer
  boundary maps are $H$-quasisymmetric, with a constant $H$
  independent of $n$. For the inner boundary maps, the identity, this
  is trivially true, so we only need to establish it for the outer
  boundary maps. To do so, we will show that $h_n|_{\s_{r_n}}$ is the
  restriction of a conformal map defined on an annular neighborhood of
  $\s_{r_n}$, and apply Corollary~\ref{cor:conformal-qs}.

  Observe that by our choice of $r$,
  \[
    b_n: b_n^{-1}(\A_{r^2})\to
    \A_{r^2} \quad \quad \text{and} \quad \quad
    \widetilde{b}_n:\widetilde{b}_n^{-1}(\A_{r^2})\to\A_{r^2}
  \]
  are analytic $d_n$-to-$1$ covering maps. Hence, the branch of
  $\psi_n \defeq \widetilde{b}_n^{-1} \circ b_n$ which agrees with $h_n$
  on $\s_{r_n}$, as chosen in \eqref{item_3.6_boundary}, is a conformal
  map from $\A_{r_n^2}$ to $\widetilde{b}_n^{-1}(\A_{r^2})$, with
  $\psi_n|_{\mathbb{S}_{r_n}}=h_n$. Again using the uniform bounds
  \eqref{eq:uniform-bounds}, we see that choosing $t=r^{1/d} \in (0,1)$,
  we have
  \[ 
    r_n^2\leq t\cdot r_n\quad \quad \text{and} \quad \quad \frac{r_n}{t} \leq 1.
  \]
  Hence, $\A_{tr_n,r_n/t} \subset \A_{r_n^2}$, which is the domain of
  $\psi_n$.  From Corollary~\ref{cor:conformal-qs} we conclude that
  $h_n = \psi_n|_{\s_{r_n}}$ is $H$-quasisymmetric, with $H$ depending
  only on $t=r^{1/d}$, which in turn is independent of~$n$.
\end{subproof}
 \item
 \label{item_3.6_commuting}
  $h_n: \interior(C_n) \to \interior(\widetilde{C}_n)$ is defined iteratively, using that $h_n$ has been defined in $\overline{\D_{r_n}}$ for all $n$: Notice that for every $n\geq 0$, the maps
 \[
 b_n: b_n^{-1}(A_{n+1})\to
 A_{n+1} \quad \quad \text{and} \quad \quad
 \widetilde{b}_n:\widetilde{b}_n^{-1}(\widetilde{A}_{n+1})\to \widetilde{A}_{n+1}
 \]
 are covering maps of degree $d_n$ between  annuli 
 (belonging to  $\D^{(n)}$ and $\D^{(n+1)}$, $\widetilde{\D}^{(n)}$
 and $\widetilde{\D}^{(n+1)}$, respectively). Hence, the $K-$quasiconformal map
 $h_{n+1}: A_{n+1}\to \widetilde{A}_{n+1}$ can be \textit{lifted} to a
 $K$-quasiconformal map
 $h_n: b_n^{-1}(A_{n+1})\to
 \widetilde{b}_n^{-1}(\widetilde{A}_{n+1})$, so that the functional equation
 \begin{equation}\label{eq_functional}
 	\widetilde{b}_n \circ h_n(z) = h_{n+1} \circ b_n(z), 
 \end{equation}
 is satisfied for every $n\geq 0$.
 
 Once this is done, we proceed in the same way to extend $h_n$
 to the next sequence of annuli
 $b_n^{-1} (b_{n+1}^{-1}
 (A_{n+2})) \subset \D^{(n)}$. It follows
 from Lemma~\ref{lem:uh-basics}\eqref{item:A_n_once} that in
 $\D^{(n)}$, these annuli are disjoint from the previous
 ones. Iterating this process infinitely many times, and since all points in $C_n$ are eventually mapped to $A_j$ for some $j\geq n$, we extend $h_n$ to the whole of $C_n$ 
 for every $n$.
\end{enumerate}

We have now defined maps
$h_n\colon \D^{(n)} \to \widetilde{\D}^{(n)}$, continuous by construction, $K$-quasiconformal for some uniform $K$ by \eqref{item_3.6_qc}, and so that \eqref{item:h_n_id} and \eqref{item:h_n_equiv} in the statement of the theorem hold by \eqref{item_3.6_id} and \eqref{item_3.6_commuting} respectively, which concludes the proof.
\end{proof}

\section{Uniform hyperbolicity in wandering domains}
\label{sec:uniformlyWD}

In \Cref{sec:hyp_BP}, we introduced the notion of uniform
hyperbolicity for sequences of Blaschke products. In this section, we
define this concept for the restriction of an entire map to an orbit
of wandering domains, discuss the relation between the two
definitions, as well as with the notion of strong
  contraction, and prove \thmCref and \thmDref.

\begin{defn}[Admissible orbit of wandering
  domains] \label{def:admissible} Let $f$ be an entire function with
  an orbit of wandering domains $(U_n)$. We say that this orbit is
  \emph{admissible} if each $U_n$ is simply connected, and the degrees
  $d_n = \deg f\vert_{U_n}$ satisfy $2 \le d_n \leq d <\infty$ for
  some $d\in \N$ and all $n$.
\end{defn}
 
Note that any entire function with an admissible orbit of wandering
domains $(U_n)$ has an \textit{associated sequence} of finite Blaschke
products $(b_n \defeq \phi_{n+1} \circ f \circ \phi_n^{-1})$, which
can be chosen to fix $0$ by considering suitable Riemann maps
($\phi_{n}$).

\begin{defn}[Uniform hyperbolicity in wandering domains]
  \label{def:unif hyp f}
  Let $f$ be an entire function with an admissible orbit of wandering
  domains $(U_n)$.  We say that $(f\vert_{U_n})$ is \textit{uniformly
    hyperbolic} if there exists an orbit $(z_n = f^n(z_0))$ and a
  constant $C$ such that $d_{U_n}(z_n, c) \le C$ for all
  $c\in \Crit(f\vert_{U_n})$ and all $n \ge 0$.
\end{defn}


As a direct consequence of \Cref{thm:uniform-hyp-characterization} and
the invariance of the hyperbolic metric under conformal maps, we have
the following.
\begin{thm}[Equivalence of definitions]
  \label{thm:unihypiff}
  Let $f$ be an entire function with an admissible orbit of wandering
  domains $(U_n)$. Let $(b_n)$ be a sequence of Blaschke products
  fixing zero associated with $(f\vert_{U_n})$. Then $(f\vert_{U_n})$
  is uniformly hyperbolic if and only if $(b_n)$ is uniformly
  hyperbolic. Furthermore, if $(f\vert_{U_n})$ is uniformly
  hyperbolic, and if $(z_n)$ is any orbit in $(U_n)$, then there
  exists a constant $C$ such that $d_{U_n}(z_n,c) \le C$ for all
  $c \in \Crit(f|_{U_n})$ and all $n \ge 0$.
\end{thm}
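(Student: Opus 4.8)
The plan is to conjugate everything to the unit disk through the Riemann maps $(\phi_n\colon U_n\to\D)$ defining the associated sequence $b_n = \phi_{n+1}\circ f\circ\phi_n^{-1}$, and then to quote \Cref{thm:uniform-hyp-characterization}. First I would collect three routine facts. (i) Admissibility of $(U_n)$ forces $\deg b_n = \deg f\vert_{U_n}\le d$, and the choice of Riemann maps gives $b_n(0)=0$; hence $(b_n)$ is a sequence of Blaschke products of uniformly bounded degree fixing zero, so \Cref{thm:uniform-hyp-characterization} applies to it. (ii) Each $\phi_n$ is a hyperbolic isometry, so $d_{U_n}(z,w)=d_{\D^{(n)}}(\phi_n(z),\phi_n(w))$ for all $z,w\in U_n$, where $\D^{(n)}$ denotes the copy of $\D$ serving as the $n$-th domain of $(b_n)$. (iii) Since $\phi_n$ and $\phi_{n+1}$ are biholomorphisms, $\phi_n(\Crit(f\vert_{U_n}))=\Crit(b_n)$ for every $n$, and a sequence $(z_n)$ with $z_n\in U_n$ and $f(z_n)=z_{n+1}$ is an $(f\vert_{U_n})$-orbit if and only if $(\phi_n(z_n))$ is a $(b_n)$-orbit.

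Granting (i)--(iii), the equivalence is immediate. If $(f\vert_{U_n})$ is uniformly hyperbolic, let $(z_n)$ and $C$ be as in \Cref{def:unif hyp f}; then $(\phi_n(z_n))$ is a $(b_n)$-orbit with $d_{\D^{(n)}}(\phi_n(z_n),c')\le C$ for every $c'\in\Crit(b_n)$ and all $n$, which is precisely condition~(d) of \Cref{thm:uniform-hyp-characterization}, so $(b_n)$ is uniformly hyperbolic. Conversely, if $(b_n)$ is uniformly hyperbolic, fix any $z_0\in U_0$ and set $z_n=f^n(z_0)$; by condition~(c) of \Cref{thm:uniform-hyp-characterization}, applied to the $(b_n)$-orbit $(\phi_n(z_n))$, there is a constant $C$ with $d_{\D^{(n)}}(\phi_n(z_n),c')\le C$ for all $c'\in\Crit(b_n)$ and all $n$, and transporting this back through the isometry $\phi_n$ gives $d_{U_n}(z_n,c)\le C$ for all $c\in\Crit(f\vert_{U_n})$ and all $n$, so $(f\vert_{U_n})$ is uniformly hyperbolic. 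The ``furthermore'' clause is the same computation: if $(f\vert_{U_n})$ is uniformly hyperbolic then $(b_n)$ is too, and condition~(c) --- which applies to \emph{every} $(b_n)$-orbit --- gives, for an arbitrary $(f\vert_{U_n})$-orbit $(z_n)$, a uniform bound $d_{\D^{(n)}}(\phi_n(z_n),c')\le C$, whence $d_{U_n}(z_n,c)\le C$ for all $c\in\Crit(f\vert_{U_n})$ and all $n$.

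I do not expect a genuine obstacle here: the whole content is already packaged in \Cref{thm:uniform-hyp-characterization}, and the argument is pure bookkeeping through the conformal conjugacy. The only points worth a moment of care are matching the existential quantifier ``there exists an orbit'' in \Cref{def:unif hyp f} with condition~(d), matching the ``for any orbit'' conclusion with condition~(c), and being careful that the isometry property of $\phi_n$ is used with the hyperbolic metric of $U_n$ on one side and that of the $n$-th disk $\D^{(n)}$ on the other.
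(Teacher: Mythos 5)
Your proof is correct and matches the paper's intent exactly: the paper states the result without a written-out argument, calling it a ``direct consequence of \Cref{thm:uniform-hyp-characterization} and the invariance of the hyperbolic metric under conformal maps,'' which is precisely the bookkeeping you carry out via the isometries $\phi_n$ and the equivalence of conditions (c) and (d) there. Nothing to add.
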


One possible classification of orbits of wandering domains, $(U_n)$, is proven in
\cite{befrs1}, in terms of the sequence of hyperbolic distances
$d_{U_n}(z_n,z'_n)$ between points in different orbits $(z_n)$
$(z'_n)$, giving rise to three types:
\textit{contracting}, \textit{semi-contracting} and \textit{eventually
  isometric}. One can give an even finer classification in the
contracting case according to the rate of contraction with which
iterates cluster together. More precisely, we say that $(f\vert_{U_n})$ is \textit{strongly contracting} if there
exists $c \in (0,1)$ such that
\begin{equation}\label{eq:strong}
  d_{U_n}(f^n(z),f^n(z'))=O(c^n) \text{ for one pair (and hence for
    all pairs) }z, z' \in U_0,
\end{equation}
see \cite[Definition~1.2]{befrs1}. The following proposition shows
that uniform hyperbolicity is a strictly stronger condition than
strong contraction.

\begin{prop}[Uniform hyperbolicity implies strong contraction]
  \label{contraction}
  Let $f$ be an entire function with an admissible orbit of wandering
  domains $(U_n)$. If $(f\vert_{U_n})$ is uniformly hyperbolic, then
  $(f\vert_{U_n})$ is strongly contracting. The converse is in general not true, i.e., there exist entire functions $f$ such that $(f\vert_{U_n})$ is strongly contracting, but not uniformly hyperbolic.
\end{prop}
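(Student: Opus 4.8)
The plan is to prove the two assertions separately. For the forward implication, suppose $(f\vert_{U_n})$ is uniformly hyperbolic. By \Cref{thm:unihypiff}, the associated sequence of Blaschke products $(b_n)$ fixing zero is uniformly hyperbolic, so by \Cref{def_uh} there is a radius $r \in (0,1)$ with all zeros (and hence, by \Cref{lem:blaschke-basics}, all critical points and critical values) of every $b_n$ contained in $\D_r$, and the degrees are bounded by some $d$. The key quantitative input is \Cref{lem:blaschke-basics}\eqref{item:derivative-estimate}, which gives $\rho_n \defeq |b_n'(0)| < r^{d_n-1} \le r$ for all $n$. This means the multipliers at the fixed points are uniformly bounded away from $1$. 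Now fix a point $z \in \D^{(0)}$ and track its orbit $z_n \defeq B_n(z)$. I would combine two facts: first, by \Cref{lem:uniform-schwarz} (applied to $b_n$, which lies in $\F_{\rho_n}$ with $\rho_n \le r$), once an orbit enters $\overline{\D}_r$ it stays there and contracts toward $0$; more usefully, for $|w| \le r$ we get $|b_n(w)| \le c|w|$ with $c = \frac{\rho_n + r}{1+\rho_n r} \le \frac{2r}{1+r^2} < 1$, a bound uniform in $n$. Second, every orbit eventually enters $\D_r$: indeed \Cref{lem:uh-basics}\eqref{item:Bn_to_0} gives $B_n(z) \to 0$, so there is $N = N(z)$ with $|B_n(z)| < r$ for $n \ge N$, and thereafter $|B_n(z)| \le c^{\,n-N}|B_N(z)|$. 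Translating the Euclidean contraction $|w| \le \lambda$ into a hyperbolic statement, one has $d_\D(0, w) \asymp |w|$ for $|w|$ bounded away from $1$ (the hyperbolic density on $\D_r$ is comparable to the Euclidean one), so $d_{\D^{(n)}}(0, B_n(z)) = O(c^n)$. Since $0$ is a common point of the orbit $0 = B_n(0)$, and the hyperbolic distance between any two orbits is nonincreasing and hence comparable to the distance from either to $0$ up to the finite initial segment, we get $d_{\D^{(n)}}(B_n(z), B_n(z')) = O(c^n)$ for all $z, z' \in \D$. Pulling back through the Riemann maps $\phi_n$ (which are hyperbolic isometries $U_n \to \D$) yields $d_{U_n}(f^n(w), f^n(w')) = O(c^n)$, which is exactly \eqref{eq:strong}, so $(f\vert_{U_n})$ is strongly contracting.

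For the converse failure, I would exhibit (or point to) a sequence of Blaschke products that is strongly contracting but not uniformly hyperbolic, and then realize it in an entire function via \thmDref —wait, \thmD needs uniform hyperbolicity, so instead I would use the realization result \cite[Cor.~1.10]{associates_20} or the constructions of \cite{befrs1} directly, which produce entire functions with prescribed internal dynamics in a wandering orbit from a sequence satisfying only $\sum_n (1-\rho_n) = \infty$ type conditions. The model sequence: take each $b_n$ of degree $2$ of the form $b_n(z) = z\frac{z - a_n}{1 - a_n z}$ with $a_n \in (0,1)$, $a_n \to 1$, chosen so that $\rho_n = a_n$ (here $b_n'(0) = -a_n$, so $|\rho_n| = a_n$). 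Then $\sum (1 - \rho_n) = \sum(1-a_n)$; choosing $a_n = 1 - 2^{-n}$ makes this a convergent series, which would make $B_n(z) \not\to 0$ — that's the wrong direction. Instead choose $a_n \to 1$ slowly, e.g. $a_n = 1 - 1/(n+2)$, so $\sum(1-a_n) = \infty$ and orbits still converge to $0$, but the critical point of $b_n$ (which lies on the geodesic segment $[0, a_n]$, at hyperbolic distance $\to \infty$ from $0$) escapes to the boundary, violating \Cref{thm:uniform-hyp-characterization}\eqref{cond:crit-pts-2}. One must then check strong contraction of this model: since $\rho_n = a_n \to 1$, the naive per-step bound $c_n = \frac{2a_n}{1+a_n^2} \to 1$ is useless near the boundary, so I would instead argue by hand — for an orbit that has entered $\D_r$ with $r$ fixed small, \Cref{lem:uniform-schwarz} does give a uniform contraction factor $\frac{2r}{1+r^2} < 1$ there, and an orbit enters $\D_r$ in finitely many steps; the subtle point is controlling how long it takes to enter $\D_r$ and ensuring that $d_{U_n}(f^n z, f^n z')$, not just $d(0, \cdot)$, decays geometrically during the approach phase. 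Because the two orbits $B_n z$ and $B_n z'$ are both moving toward $0$ and the maps are holomorphic (Schwarz–Pick contracting), their mutual hyperbolic distance is nonincreasing; once both lie in $\D_r$ it decays by factor $\frac{2r}{1+r^2}$ per step; and before that there are only finitely many steps, so the $O(c^n)$ estimate with $c = \frac{2r}{1+r^2}$ holds with a larger implied constant. This establishes strong contraction of the model, and the realizing entire function inherits it.

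\textbf{Main obstacle.} The delicate part is the converse: one needs a concrete model sequence that is simultaneously (i) \emph{not} uniformly hyperbolic — critical points must recede hyperbolically to the boundary — yet (ii) strongly contracting — orbits must cluster at a geometric rate. These pull against each other, since the natural obstruction to strong contraction (multipliers $\rho_n \to 1$) is also what drives uniform hyperbolicity to fail; the resolution is that strong contraction is governed by the \emph{global} attractor dynamics on a fixed compact $\overline{\D}_r$ where \Cref{lem:uniform-schwarz} supplies a uniform factor, whereas uniform hyperbolicity is sensitive to the \emph{location of critical points} near the boundary, and these are genuinely independent. Making sure the chosen $(a_n)$ threads this needle — and then invoking the correct realization theorem (one not itself requiring uniform hyperbolicity) to promote the model to an entire function — is where the real care lies; everything in the forward direction is a routine consequence of \Cref{lem:uniform-schwarz}, \Cref{lem:blaschke-basics}\eqref{item:derivative-estimate}, and \Cref{lem:uh-basics}.
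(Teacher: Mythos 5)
Your forward direction is correct, and it takes a genuinely different route from the paper. The paper simply cites the characterization from \cite{befrs1} that strong contraction is equivalent to $\limsup_{n} \frac1n \sum_{k=1}^n |b'_k(0)| < 1$ and notes that uniform hyperbolicity forces $|b_n'(0)| \le r < 1$, making the average $\le r$. You instead re-derive the geometric decay by hand from \Cref{lem:uniform-schwarz}: since $\rho_n \le r$, orbits enter $\D_r$ and thereafter contract Euclideanly by the uniform factor $c = \tfrac{2r}{1+r^2} < 1$, which converts to $d_\D(0, B_n(z)) = O(c^n)$ and then to $d_\D(B_n z, B_n z') = O(c^n)$ by the triangle inequality. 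Both are valid; the paper's is shorter, yours is self-contained.

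The converse is where your proposal fails, and the failure is exactly the needle you flagged but did not thread. Your proposed model has $b_n(z) = z\frac{z-a_n}{1-a_n z}$ with $a_n \to 1$ \emph{at every step}, giving $|b_n'(0)| = a_n \to 1$. This sequence is \emph{not} strongly contracting. Indeed $B_n'(0) = \pm\prod_{k<n} a_k$, and with your choice $a_n = 1 - 1/(n+2)$ this product is $\sim 2/n$, so orbits approach $0$ only polynomially, not geometrically; equivalently, $\frac1n\sum_{k\le n} |b_k'(0)| \to 1$, which by the characterization quoted in the paper's proof violates strong contraction. Your attempted rescue (``once both lie in $\D_r$ it decays by factor $\frac{2r}{1+r^2}$ per step'') misapplies \Cref{lem:uniform-schwarz}: that lemma gives contraction factor $c = \frac{\rho + r}{1 + \rho r}$, and the bound $c \le \frac{2r}{1+r^2}$ requires $\rho \le r$. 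In your example $\rho_n = a_n \to 1 > r$, so $c_n \to 1$ and the per-step factor degenerates even inside $\D_r$. Schwarz--Pick only gives that the hyperbolic distance between the two orbits is nonincreasing, not that it decays geometrically.

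The paper's example resolves this by \emph{interleaving}: $b_n = z^2$ for odd $n$ and $b_n = z\frac{z+a_n}{1+a_n z}$ with $a_n \to 1$ for even $n$. The $z^2$ steps (with $b_n'(0)=0$) supply the geometric decay $|B_n(1/2)| \lesssim 2^{-n}$ and hence strong contraction, while the near-parabolic even steps push the critical points $c_{2k} \to -1$ hyperbolically to infinity, killing uniform hyperbolicity. The two requirements are realized on disjoint sets of indices, which is what makes the example work and what your single-family construction lacks.
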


\begin{proof}
  It is proven in \cite[Theorem~B and Lemma~3.1(b)]{befrs1} that
  strong contraction is equivalent to requiring that
  \begin{equation}\label{eq:strong-contraction}
    \limsup_{n\to\infty} \frac1n \sum_{k=1}^n |b'_k(0)| < 1,
  \end{equation}
  for any (and hence all) Blaschke sequences fixing 0 associated to
  $(U_n)$.  If we assume that $(f\vert_{U_n})$ is uniformly
  hyperbolic, then it follows from Lemma~\ref{lem:uh-basics}
  and \Cref{thm:unihypiff} that for any associated
  sequence of Blaschke products $(b_n)$ that fix $0$, there exists $r<1$
  such that $|b_n'(0)| \le r$ for all $n$, which implies that the
  average in (\ref{eq:strong-contraction}) is smaller than $r<1$.
  
 In order to construct an example for which the reverse
    implication fails, consider first the sequence of Blaschke
    products
    \[
      b_n(z)\defeq
      \begin{cases}
        z^2, & \text{for}\;n=2k-1,  k\geq 1 \\
        z \frac{z+a_n}{1+a_nz}, & \text{for}\; n=2k,  k\geq 1,
      \end{cases}
    \] 
    where $a_n \in (0,1)$, $a_2 = 1/2$, and $a_n \to 1$ for
    $n \to \infty$.  We have that $b_n(0)=0$, $b_{2k-1}'(0)=0$ and
    $b_{2k}'(0)=a_{2k} \to 1$. Let $c_n$ be the critical point of
    $b_{n}$, and note that $c_{2k} \to -1$ as $k \to \infty$.  Now let
    $D_n \defeq \{z: |z-4n|<1\}$ and let $f_n:D_n \to D_{n+1}$ be
    defined by $f_n= T_{n+1} \circ b_n \circ T_n^{-1}$, where
    $T_n(z)=z+4n$. Finally, let $B_n= b_n \circ \cdots \circ b_1$ and
    $F_n = f_n \circ \cdots \circ f_1$. We apply
    \cite[Theorem~5.3]{befrs1} to obtain a transcendental entire
    function $f$ with an orbit of simply connected wandering domains
    $(U_n)$ such that the following hold:
  \begin{enumerate}
  \item \label{wandering-1:1}
    $f(4n)= 4(n+1)$, $f^n(1/2)= F_n(1/2)$, $f(C_n)=f_n(C_n)$,
    and $f'(C_n)=f'_n(C_n)=0$ for $n \geq 1$, where $C_n=c_n+4n$. 
  \item \label{wandering-1:2}
    $d_{U_n}(f^n(0), f^n(1/2))< K_n\cdot d_{D_n}(F_n(0),
    F_n(1/2)) $ for some $K_n >1$ with $K_n \to 1$ as $n \to \infty$. 
  \item \label{wandering-1:3}
    $U_n$ is asymptotically equal to $D_n$  (as $n\to \infty$).
  \end{enumerate}

  Since $b_n(0)=0$ and $b_{2k}=z^2$, it follows by the Schwarz lemma
  that
  \[\operatorname{dist}(F_n(0), F_n(1/2))= \operatorname{dist}(0,
    F_n(1/2))< \frac{1}{2^n}.\] Since
  $d_{\mathbb{D}}(0,z)= \log (1+|z|)/(1-|z|)$ we deduce that there
  exists $1<c<2$ such that
  \[d_{D_n}(0, F_n(1/2)) = d_{\D}(0,B_n(1/2)) <\frac{1}{c^n}, \] for
  $n$ sufficiently large. It now follows by \eqref{wandering-1:3}
  together with \eqref{eq:strong} that $(f\vert_{U_n})$ is strongly contracting. Moreover, by \eqref{wandering-1:1}, $f(C_n)=f_n(C_n)$. Since $c_{2k} \to -1$ as $k \to \infty$ we
  deduce by \eqref{wandering-1:3} that $C_{2k} \to \partial
  U_{2k}$. Also, since by \eqref{wandering-1:1},
  $f(4n)=4(n+1)$, $d_{U_{2k}}(8k, C_{2k})\to \infty$. Since
  $C_{2k}$ is the critical point of $f$ in $U_{2k}$ we deduce by
  \Cref{thm:unihypiff} that $(f|_{U_n})$ is not uniformly hyperbolic.
\end{proof}

\begin{observation}[Wandering domains arising as lifts]
  \label{obs:lifts_uh}
  A well-known method for obtaining wandering domains is by
  \textit{lifting} invariant components of a self-map $F$ of
  $\C\setminus \{0\}$ by the exponential map. Namely, for any such
  $F$, there is a (non-unique) transcendental entire function $f$ such
  that $\exp\circ f=F\circ \exp$; see \cite[p.3]{befrs1} for more
  details. Suppose that $F$ has an invariant Fatou component $\Delta$
  and $f$ has an orbit of wandering domains $(U_n)$, so that
  $\exp(U_n)=\Delta$ for all $n$. If $\Delta$ is a (super)attracting
  basin, then $(f\vert_{U_n})$ is uniformly hyperbolic, whereas if
  $\Delta$ is a parabolic basin, $(f\vert_{U_n})$ is not uniformly
  hyperbolic, as it is not strongly contracting; see \cite[Corollary~3.5]{befrs1}. If $\Delta$ is a Siegel disk or a univalent Baker
  domain, $(f\vert_{U_n})$ is not uniformly hyperbolic either, since
  $\deg f\vert_{U_n} = 1$ for all $n$. Finally, if $\Delta$ is a Baker
  domain where $F$ has finite degree greater than one, then $\Delta$
  has finitely many critical points $c_j, j=1,\dots,n$, and since for
  any $z\in \Delta$, $F^n(z)\to \infty$, we have
  $d_{\Delta}(F^{n}(z),c_j) \to \infty$ for all $j$ as $n \to
  \infty$. Since the hyperbolic metric is conformally invariant, in
  particular under the inverse branches of $\exp$, it follows by
  \Cref{thm:unihypiff} that $(f\vert_{U_n})$ is not uniformly
  hyperbolic.
\end{observation}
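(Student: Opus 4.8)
The plan is to transport the dynamics of $F$ on $\Delta$ through the semiconjugacy $\exp\circ f=F\circ\exp$ and then read off uniform hyperbolicity via Definition~\ref{def:unif hyp f} and Theorem~\ref{thm:unihypiff}. First I would set up the dictionary between the two systems. Since $\exp\colon\C\to\C\setminus\{0\}$ is a covering map and $\exp(U_n)=\Delta$, each restriction $\exp|_{U_n}\colon U_n\to\Delta$ is a covering map, hence a local hyperbolic isometry; in particular $d_\Delta(\exp a,\exp b)\le d_{U_n}(a,b)$ for $a,b\in U_n$, with equality when $\Delta$ is simply connected (in which case $\exp|_{U_n}$ is a conformal isomorphism onto $\Delta$). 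Differentiating the semiconjugacy gives $\exp(f(z))\,f'(z)=F'(\exp z)\exp z$, so $\Crit(f|_{U_n})=(\exp|_{U_n})^{-1}(\Crit(F|_\Delta))$ and $\deg(f|_{U_n})=\deg(F|_\Delta)$; moreover any orbit $(w_n)$ with $w_n\in U_n$ projects to the $F$-orbit $\exp(w_n)=F^n(\exp w_0)$ in $\Delta$. Thus, by Theorem~\ref{thm:unihypiff}, $(f|_{U_n})$ is uniformly hyperbolic exactly when the orbit is admissible and there exist $\zeta\in\Delta$ and a constant $C$ with $d_\Delta(F^n(\zeta),c)\le C$ for all $c\in\Crit(F|_\Delta)$ and all $n$.

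In the attracting and superattracting cases (where $F|_\Delta$ has finite degree, which is what makes $(U_n)$ admissible, and where $\Delta$ is simply connected), let $z_0\in\Delta$ be the attracting fixed point. Then $F$ has only finitely many critical points $c_1,\dots,c_q$ in $\Delta$, each at a fixed finite hyperbolic distance from $z_0$. For any orbit $\zeta\in\Delta$ we have $F^n(\zeta)\to z_0$, so $d_\Delta(F^n(\zeta),z_0)\to0$ and hence $d_\Delta(F^n(\zeta),c_i)\le d_\Delta(F^n(\zeta),z_0)+\max_i d_\Delta(z_0,c_i)$ is bounded; the criterion above then yields uniform hyperbolicity. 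For a Siegel disk or a univalent Baker domain, $F|_\Delta$ is injective, so $\deg(f|_{U_n})=1$ for all $n$, the orbit is not admissible by Definition~\ref{def:admissible}, and in particular $(f|_{U_n})$ is not uniformly hyperbolic.

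For a Baker domain on which $F$ has finite degree greater than one, $F$ again has only finitely many critical points $c_1,\dots,c_q$ in $\Delta$, but now every orbit $F^n(\zeta)$ converges to $0$ or to $\infty$ and therefore leaves every compact subset of $\Delta$; hence, for the lifted orbit and critical points, $d_{U_n}(w_n,c')\ge d_\Delta(F^n(\zeta),c_i)\to\infty$, so no uniform bound $C$ exists and $(f|_{U_n})$ is not uniformly hyperbolic. The parabolic basin case is the one outlier: here the critical orbits are not the obstruction, and instead I would use that orbits in $\Delta$ approach the parabolic boundary fixed point at sub-geometric (essentially $1/n$) rate, so that $d_{U_n}(f^n(z),f^n(z'))=d_\Delta(F^n(\exp z),F^n(\exp z'))$ does not decay geometrically; by \cite[Corollary~3.5]{befrs1}, $(f|_{U_n})$ is then not strongly contracting, and hence not uniformly hyperbolic by the contrapositive of Proposition~\ref{contraction}.

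The main points to be careful with are, first, the metric bookkeeping — checking that $\exp|_{U_n}$ is genuinely an isometry onto $\Delta$ (or at least distance-nondecreasing in the direction one needs) and that $\Crit(F|_\Delta)$ is finite, which is precisely where the finite-degree hypothesis, and with it admissibility of $(U_n)$, enters. Second, and genuinely different in nature, is the parabolic case: since there it is the rate of contraction rather than the critical orbit that fails, one cannot invoke Theorem~\ref{thm:unihypiff} directly and must route the argument through the strong-contraction characterization of \cite{befrs1} together with Proposition~\ref{contraction}.
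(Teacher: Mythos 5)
Your proposal is correct and takes essentially the same route as the paper: you transfer the hyperbolic geometry through the semiconjugacy (using that $\exp|_{U_n}\colon U_n\to\Delta$ is a conformal isomorphism, so hyperbolic distances are preserved), invoke \Cref{thm:unihypiff} to reduce uniform hyperbolicity to boundedness of $d_\Delta(F^n(\zeta),c)$ over critical points $c$, and handle the parabolic case separately via the strong-contraction criterion of \cite{befrs1} and \Cref{contraction}. You simply spell out more explicitly than the paper does the admissibility requirement (finite degree $\ge 2$ and simple connectivity of the $U_n$) that is left implicit in the observation.
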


We are now ready to prove \thmCref, using the tools developed in the
previous sections. Given an orbit of wandering domains $(U_n)$ such
that $(f\vert_{U_n)}$ is uniformly hyperbolic and a sequence of
prescribed uniformly hyperbolic Blaschke products $(b_n)$, we will
perform surgery to ``change'' the map $f$ in $(U_n)$, so that $(b_n)$
becomes an associated Blaschke sequence of the new wandering orbit.

\begin{proof}[Proof of \thmCref]
  Let $(\widetilde{b}_n)$ be a sequence of Blaschke products fixing
  zero associated with $(f\vert_{U_n})$. In particular, each
  $\widetilde{b}_n$ is of degree
  $d_n\defeq \deg(f\vert_{U_n})=\deg(b_n)$ and by
  \Cref{thm:unihypiff}, $(\widetilde{b}_n)$ is uniformly hyperbolic.
  Denote by $\varphi_n: U_n \to\D$ the Riemann maps such that
  \[
    \widetilde{b}_n\circ \varphi_n = \varphi_{n+1} \circ f, \quad \text{
      for }n\geq 0.
  \]
  By \Cref{thm:gluingmap}, applied with the roles of
  $(\widetilde{b}_n)$ and $(b_n)$ reversed, there exist $K\geq 1$,
  $0<r<1$ and a sequence of $K-$quasiconformal maps $h_n:\D\to\D$ such
  that $h_n$ is the identity on $\overline{\D_r}$ and
  \[
    \widetilde{b}_n= h_{n+1}^{-1}\circ b_n \circ h_n \quad \text{ on  }
    \quad C_n\defeq \D\setminus \widetilde{b}^{-1}_n(\D_r).
  \]
  Note that if $A_n\defeq \widetilde{b}^{-1}_n(\D_r)\setminus \D_r$, then
  $\D=\D_r \cup A_n \cup C_n$ for each $n$. 
  Define a new sequence of maps 
  \[
    \defmap{g_n}{\D}{\D}{z}{h_{n+1}^{-1} \circ b_n\circ h_n(z)}.
  \]
  By construction, $g_n$ is $K^2-$quasiregular on $\D$ but
  furthermore, for each $n\geq 0$,
  \[
    g_n=b_n   \quad \text{on $\D_r$} \quad \quad \text{and} \quad
    \quad g_n=\widetilde{b}_n \quad \text{on $C_n$.}
  \]
  Moreover, the sequences $(g_n)$ and $(b_n)$ are quasiconformally equivalent on $\D$.

  Using the sequence $(g_n)$, we modify the initial map $f$,
  by defining
  \begin{equation*}
    F(z) \defeq
    \begin{cases}
      f(z), & \text{ if } z \notin \bigcup_{n\ge 0} U_n, \\
      \varphi_{n+1}^{-1} \circ g_{n} \circ \varphi_{n}(z), & \text{ if
      } z \in U_n.
    \end{cases}
  \end{equation*}
  
  Notice that continuity of $F$ is ensured by construction, since
  $g_n(z)= \widetilde{b}_n(z)$ for all $z\in C_n$, which implies that
  $f(z)=F(z)$ on a neighborhood of $\partial U_n$.

  Hence, letting $\psi_n\defeq h_n\circ \varphi_n$ for $n\geq 0$, by
  construction, the following diagram commutes:
  \begin{figure}[htb]
    \centering
    \begin{tikzcd}
      \ldots \arrow[r,"F"] & U_n \arrow[r, "F"] \arrow[d, "\psi_n"] &
      U_{n+1} \arrow[r, "F"] \arrow[d, "\psi_{n+1}"] & \ldots \\
      \ldots \arrow[r,"b_{n-1}"] & \D \arrow[r, "b_n"]  &
      \D \arrow[r, "b_{n+1}"] & \ldots 
    \end{tikzcd}
  \end{figure}

  Note that $F$ is $K^2$-quasiregular, and holomorphic outside the
  topological annuli
  \[
    \mathcal{A}_n\defeq \varphi_n^{-1}(A_n) =
    \varphi_n^{-1}(g_n^{-1}(\D_r)\setminus \D_r) , \,\,\, n\geq 0.
  \]
  Moreover, note that for every $k\geq 0$ and $z\in U_k$, the iterates
  of $F$ satisfy
  \[
    F^n(z)=  \psi_{n+k}^{-1}\circ b_{n-1+k}\circ \cdots \circ b_k\circ
    \psi_k(z) , \quad n\in \N,
  \]
  and hence they are uniformly $K^2$-quasiregular. 

  This is enough to conclude, by Sullivan's Straightening Theorem
  \cite[Theorem~5.6]{BraFag14}, that there exists an entire function
  $g$ quasiconformally conjugate to $F$ under a global quasiconformal
  homeomorphism $\Phi:\C\to \C$, exhibiting an orbit of wandering
  domains $\widetilde{U}_n\defeq \left( \Phi(U_n)\right)$.
  Nevertheless, to show that $(b_n)$ is an associated Blaschke
  sequence of $g$ in $(\widetilde{U}_n)$, we shall look more closely
  into the construction of $\Phi$.

  We can define a complex structure $\mu$, which has bounded
  dilatation and is $F^*$-invariant. For details on the following kind of
  construction, see \cite[Sec.~1.5]{BraFag14}.
  Let $U\defeq \bigcup_{n\geq 0} U_n$ and define
  \begin{equation*}
    \mu \defeq
    \begin{cases}
      \psi_n^*(\mu_0) & \text{ on }  U_n \text{ for every } n\geq 0, 
      \\
      (F^m)^*\mu & \text{ on }  F^{-m}(U) \setminus F^{-m+1}(U)\text{ for every } m \geq 1,\\
            \mu_0 & \text{ on }  \C\setminus \bigcup_{m\ge 0}  F^{-m}(U),
    \end{cases}
  \end{equation*}
  where $\mu_0\equiv 0$. It is easy to check that 
  \begin{enumerate}
   \item  $\mu$ is $F-$invariant on $U$ by
     definition of $F$, and hence $F^*\mu=\mu$ by construction. 
 \item $\mu=\mu_0$ on $\varphi_n^{-1}(\D_r)$ for all $n$, since
   $\psi_n=\varphi_n$ is conformal on this domain;
  \item $\mu$ has dilatation at most $K$ in  $U_n$ for any $n\geq
    0$,  since the maps $\psi_n$ are uniformly
    $K-$quasiconformal. Therefore $\mu$ has dilatation at most $K$ in
    the whole complex plane, since all the pullbacks under $F$ are
    made by holomorphic maps.
  \end{enumerate}
  
  We can then apply the Measurable Riemann Mapping Theorem (see
  e.g. \cite[Theorem~5.3.2]{AIM09}, \cite[Theorem~1.28]{BraFag14}) to
  obtain a quasiconformal homeomorphism $\Phi\colon \C\to \C$ such
  that $\Phi^*\mu_0=\mu$ and an entire map
  $g(z)\defeq \Phi \circ F\circ \Phi^{-1}(z)$ such that
  $\widetilde{U}_n\defeq \Phi(U_n)$, $n\geq 0$, is an orbit of
  wandering domains.

  Now consider the maps $\theta_n\defeq \psi_n\circ
  \Phi^{-1}:\widetilde{U}_n \to \D$, which are $K-$quasiconformal. Then,
  the following diagram commutes
  \begin{figure}[htb]
    \centering
    \begin{tikzcd}
      \ldots \arrow[r,"g"] & \widetilde{U}_n \arrow[r, "g"]  &
      \widetilde{U}_{n+1} \arrow[r, "g"] & \ldots \\
      \ldots \arrow[r,"F"] & U_n \arrow[u, "\Phi"] \arrow[r, "F"] \arrow[d, "\psi_{n}"] &
      U_{n+1} \arrow[r, "F"] \arrow[d,"\psi_{n+1}"] \arrow[u, "\Phi"] & \ldots  \\
      \ldots \arrow[r, "b_{n-1}"] & \D \arrow[r, "b_n"] & \D \arrow[r,
      "b_{n+1}"] & \ldots
    \end{tikzcd}
  \end{figure}
 
  Moreover, $\theta_n^*(\mu_0) =
  (\Phi^{-1})^*((\psi_n)^*(\mu_0))=(\Phi^{-1})^*\mu=\mu_0$.  Hence, by
  Weyl's Lemma, $\theta_n$ is conformal for all $n$ and the proof is complete.
\end{proof}

We would like to use \thmCref to prove \thmDref, which claims the
existence of entire maps with an orbit of wandering domains with a
prescribed uniformly hyperbolic Blaschke sequence. In view of
\thmCref, given such a Blaschke sequence $(b_n)$ with uniformly
bounded degrees $(d_n)$, we only need to construct an entire function
$f$ such that $(f\vert_{U_n)}$ is uniformly hyperbolic in an orbit
$(U_n)$ of wandering domains with matching degrees, i.e. such that,
$\deg(f\vert_{U_n})=d_n$ for all $n$. The function $f$ is provided by
\cite[Theorem~5.3]{befrs1}, as shown in the following lemma.

\begin{lemma}[Existence of uniformly hyperbolic $(f\vert_{U_n})$ with
  prescribed degrees] \label{lem:f_with_dn}
  Let $(d_n)$ be a sequence of natural numbers with $2\leq d_n \leq d$
  for some $d\in \N$ and all $n \in \mathbb{N}$. Then there exists an
  entire function $f$ with an orbit of simply connected
  wandering domains $(U_n)$ such that $(f|_{U_n})$ is uniformly
  hyperbolic and $\deg (f|_{U_n})=d_n$ for all $n$.
\end{lemma}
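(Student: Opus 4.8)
The plan is to realize, inside an orbit of wandering domains, the simplest uniformly hyperbolic Blaschke sequence with the prescribed degrees --- namely the power maps --- by transplanting them onto a sequence of disjoint disks escaping to $\infty$ and invoking the realization theorem \cite[Theorem~5.3]{befrs1}, exactly as in the proof of \Cref{contraction}. Set $b_n(z)\defeq z^{d_n}$; this is a Blaschke product of degree $d_n\in\{2,\dots,d\}$ fixing $0$, with $b_n'(0)=0$ and with $0$ as its only critical point in $\D$, of local degree $d_n$. Since all zeros of all the $b_n$ equal $0$ and the degrees are bounded by $d$, the sequence $(b_n)$ is uniformly hyperbolic in the sense of \Cref{def_uh}. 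Now pick centers $x_n\to\infty$ (for instance $x_n\defeq 4n$, as in the proof of \Cref{contraction}), and set $D_n\defeq\{\,|z-x_n|<1\,\}$, $T_n(z)\defeq z+x_n$, and $f_n\defeq T_{n+1}\circ b_n\circ T_n^{-1}\colon D_n\to D_{n+1}$. Then the $D_n$ are pairwise disjoint, $f_n$ is proper of degree $d_n$, it sends the center $x_n$ of $D_n$ to the center $x_{n+1}$ of $D_{n+1}$, and its unique critical point is $C_n\defeq x_n=T_n(0)$, of local degree $d_n$. Applying \cite[Theorem~5.3]{befrs1} to the sequence $(f_n)$ yields a transcendental entire function $f$ with an orbit of simply connected wandering domains $(U_n)$ such that $\deg(f\vert_{U_n})=\deg(f_n)=d_n$ for all $n$, such that $(x_n)$ is an $f$-orbit (i.e.\ $f(x_n)=x_{n+1}$), and such that $C_n=x_n$ is the unique critical point of $f$ in $U_n$. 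These are exactly the properties of that theorem used in the proof of \Cref{contraction}.

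It then remains to verify that $(f\vert_{U_n})$ is uniformly hyperbolic in the sense of \Cref{def:unif hyp f}. Take $z_0\defeq x_0$, so that the orbit $z_n\defeq f^n(z_0)$ coincides with $(x_n)$. Since the only critical point of $f$ in $U_n$ is $C_n=x_n=z_n$, we have $d_{U_n}(z_n,c)=0$ for every $c\in\Crit(f\vert_{U_n})$ and every $n\ge 0$, so the defining condition of \Cref{def:unif hyp f} holds with $C=0$. Hence $(f\vert_{U_n})$ is uniformly hyperbolic. (Equivalently, one can note that the associated Blaschke sequence of $(f\vert_{U_n})$ has all its critical points at bounded hyperbolic distance from an orbit and invoke \Cref{thm:unihypiff} together with \Cref{thm:uniform-hyp-characterization}\eqref{cond:crit-pts-3}.)

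The only step requiring genuine care is confirming that $(f_n)$ satisfies the hypotheses of \cite[Theorem~5.3]{befrs1} --- in particular that the centers $x_n$ can be chosen to do so while keeping the degrees uniformly bounded --- and that the theorem realizes the critical data of the model $(f_n)$ faithfully, so that $C_n=x_n$ really is the unique critical point of $f$ in $U_n$. Both points are already present in the application of \cite[Theorem~5.3]{befrs1} in the proof of \Cref{contraction}, so beyond this bookkeeping I expect no real obstacle.
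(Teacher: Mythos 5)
Your approach (transplant a uniformly hyperbolic Blaschke model onto escaping disks and invoke \cite[Theorem~5.3]{befrs1}) is the same as the paper's, but your specific choice of model $b_n(z)=z^{d_n}$ introduces a gap that the paper explicitly avoids, and it occurs exactly at the point you flag and then dismiss.

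The difficulty is the multiplicity of the critical point when $d_n\geq 3$. Your model map $f_n$ has a single critical point at $x_n$ of multiplicity $d_n-1$. However, \cite[Theorem~5.3]{befrs1} --- as used in this paper --- only furnishes an entire $f$ matching values and \emph{first} derivatives of the model at prescribed points (see conclusion (1) in the proof of \Cref{contraction}, and likewise in the paper's proof of this lemma: the conclusion is $f'(C_n^k)=0$, with no control of higher-order jets). Thus you only learn that $x_n$ is \emph{a} critical point of $f\vert_{U_n}$, not that it has multiplicity $d_n-1$. By Riemann--Hurwitz, $f\vert_{U_n}\colon U_n\to U_{n+1}$ being proper of degree $d_n$ has exactly $d_n-1$ critical points counted with multiplicity; if $x_n$ turns out to be simple, the remaining $d_n-2$ critical points lie somewhere in $U_n$ with no control on their location. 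In particular your assertion that $\Crit(f\vert_{U_n})=\{z_n\}$, hence $d_{U_n}(z_n,c)=0$ for all $c\in\Crit(f\vert_{U_n})$, is unjustified, and those uncontrolled critical points could escape toward $\partial U_n$, destroying uniform hyperbolicity. (For $d_n\equiv 2$ your argument is fine, since then there is automatically exactly one simple critical point; this is why the degree-$2$ construction in the proof of \Cref{contraction} does not run into the problem.)

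The paper circumvents this by choosing, via \cite[Theorem~1]{zakeri}, a uniformly hyperbolic sequence of Blaschke products $(b_n)$ of degree $d_n$ each having $d_n-1$ \emph{distinct simple} critical points $c_n^k$ in $\D$. Then \cite[Theorem~5.3]{befrs1} yields $f'(C_n^k)=0$ at $d_n-1$ distinct points, which already accounts for the full Riemann--Hurwitz budget, forcing $\Crit(f\vert_{U_n})=\{C_n^1,\dots,C_n^{d_n-1}\}$. Uniform hyperbolicity then follows because these $d_n-1$ points lie in $\D_s+4n$ for a uniform $s<1$ (by \Cref{thm:uniform-hyp-characterization}\eqref{cond:crit-pts-1} applied to the model) together with the asymptotic equality $U_n\approx D_n$. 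To repair your argument you would either need to prove a stronger version of the realization theorem that matches higher-order jets at the prescribed critical points, or replace $z^{d_n}$ by a uniformly hyperbolic Blaschke product with only simple critical points, as the paper does.
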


\begin{proof}
Let $(b_n)$ be a sequence of uniformly hyperbolic
    Blaschke products of the form (\ref{eq_BP_fix0}) with
    $\deg(b_n)=d_n$, such that $b_n$ has $d_{n}-1$ critical points,
    say $c_n^k$, $1\leq k \leq d_{n}-1$, of multiplicity $1$ in
    $\D$. Note that such a sequence always exists, see, for example,
    \cite[Theorem~1]{zakeri}.  It follows from
    \Cref{thm:uniform-hyp-characterization}\eqref{cond:crit-pts-1}
    that there exists $s \in (0,1)$ such that $c_n^k \in \D_s$ for all
    $n, k$.
	
    Now let $D_n \defeq \{z: |z-4n|<1\}$ and let $f_n:D_n \to D_{n+1}$
    be defined by $f_n= T_{n+1} \circ b_n \circ T_n^{-1}$, where
    $T_n(z)=z+4n$. We
    apply \cite[Theorem~5.3]{befrs1} to obtain a transcendental entire
    function $f$ with an orbit of simply connected wandering domains
    $(U_n)$ such that the following hold:
    \begin{enumerate}
    \item \label{wandering-2:1} $f(4n)= 4(n+1)$, 
    and $f'(C^k_n)=0$ for
      $n \in \N, 1\leq k \leq d_n-1$, where $C_n^k= c_n^k + 4n$.
    \item \label{wandering-2:2} $\deg (f|_{U_n})=d_n$ for all
      $n \in \N$.
    \item \label{wandering-2:3} $U_n$ is asymptotically equal to $D_n$ (as $n \to \infty$).
    \end{enumerate}
    Note that by \eqref{wandering-2:1} and \eqref{wandering-2:2},
    $\Crit(f|_{U_n})=\bigcup_k C_n^k$ for every $n$. Since
    $c_n^k \in \D_s$ for all $n, k$ we deduce that
    $C_n^k \in D_{n,s}\defeq\{z: |z-4n|<s\}$. It now follows by
    \eqref{wandering-2:3} that there exists $C>0$ such that
    $d_{U_n}(4n,C_n^k) \leq C$. We conclude that $(f|_{U_n})$ is
    uniformly hyperbolic.
\end{proof}

\begin{proof}[Proof of \thmDref]
  Let $(b_n)$ be a given sequence of uniformly hyperbolic Blaschke
  products, and let $d_n\defeq \deg(b_n)$ for each $n$. Let $f$ be the
  entire map with wandering domains $(U_n)$ provided by Lemma
  \ref{lem:f_with_dn}, such that $(f\vert_{U_n})$ is uniformly
  hyperbolic and $\deg(f\vert_{U_n})=d_n$. Applying \thmCref
  to $f$ and $(b_n)$, we obtain an entire function with the desired
  properties.
\end{proof}

\section{Grand orbit relations}
\label{sec:go-relations}

In this section, we discuss properties of grand orbit relations and
prove \thmAref.  We start by giving several characterizations of
$\widehat{J}(f)$.  Let $f$ be an entire map, and define
\[
  \widehat{S}(f)\defeq \cl{\GO(S(f))}.
\]
Recall from the introduction that $\widehat{J}(f)$ denotes the closure
of the grand orbits of its singular values and periodic points. Note
that $J(f)\subset \widehat{S}(f)$, and that any (super)attracting
cycle also belongs to $\widehat{S}(f)$, since it must be contained in
the accumulation set of the orbit of some singular value. Thus, we
have that
\begin{equation}\label{jhat}
  \widehat{J}(f)= \widehat{S}(f) \cup
  \cl{\Orb(\Per(f))}=\cl{\Orb(S(f) \cup\{\text{Siegel
      periodic points}\})}.
\end{equation}

For the rest of the section, we fix an entire function $f$ and write
\[
  \widehat{J}\defeq \widehat{J}(f), \quad \text{ and } \quad
  \widehat{S}\defeq \widehat{S}(f).
\]

Before proceeding, we need some basic facts on holomorphic motions.
\begin{defn}
  \label{def:hol-motion}
  Let $U \subsetneq \C$ be a simply connected domain,
  $\lambda_0 \in U$, and $E \subset \CC$ be an arbitrary set. A
  \emph{holomorphic motion} of $E$ over $(U,\lambda_0)$ is a function
  $H: U \times E \to \CC$ such that
  \begin{enumerate}[\rm (a)]
  \item For any $z \in E$, the map $z \mapsto H(\lambda,z)$ is
    holomorphic in $U$.
  \item For any $\lambda \in U$, the map
    $\lambda \mapsto H(\lambda,z)$ is injective.
  \item $H(\lambda_0,z)=z$ for all $z \in E$.
  \end{enumerate}
\end{defn}
\begin{remark}
  Typically, holomorphic motions are defined over $\D$ with basepoint
  $\lambda_0 = 0$. However, composition (in the $\lambda$-variable)
  with a Riemann map of $U$ onto the unit disk shows that the theory
  with our slightly more general definition is completely equivalent
  to the one in the literature.
\end{remark}
Mañé, Sad, and Sullivan introduced the concept of holomorphic motions
and proved the following remarkable \emph{$\lambda$-lemma}:
\begin{thm}[{\cite[Theorem~12.1.2]{AIM09}}]
  \label{thm:lambda-lemma}
  Let $H$ be a holomorphic motion of a set $E$ over
  $(U,\lambda_0)$. Then $H$ has a unique extension to a holomorphic
  motion of the closure $\overline{E}$ (taken in $\CC$) such that
  \begin{enumerate}[\rm (a)]
  \item $(\lambda,z) \to H(\lambda,z)$ is jointly continuous in
    $U \times \overline{E}$, and
  \item For each $\lambda \in U$, the map $z \mapsto H(\lambda,z)$ is a
    quasisymmetric embedding of $\overline{E}$ into $\CC$.
  \end{enumerate}
\end{thm}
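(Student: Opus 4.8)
The plan is to reproduce the classical argument of Mañé, Sad, and Sullivan, whose engine is the Schwarz--Pick inequality for holomorphic maps into the thrice-punctured sphere $Y \defeq \CC \setminus \{0,1,\infty\}$. First I would use the remark preceding the statement to reduce to $U = \D$ and $\lambda_0 = 0$. The cases $|E| \le 2$ are trivial (then $\overline{E} = E$ and the quasisymmetry condition is vacuous), and $|E| = 3$ is immediate, since $z \mapsto H(\lambda,z)$ then agrees with the restriction of a Möbius map whose coefficients depend holomorphically on $\lambda$; so I assume $|E| \ge 4$ and fix two base points $p, q \in E$.

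The heart of the matter is a cross-ratio distortion estimate. Fix the cross-ratio convention $\chi$ for which $\chi(w_1,w_2,w_3,w_4) \notin \{0,1,\infty\}$ precisely when $w_1, w_2, w_3, w_4$ are pairwise distinct. For distinct $z_1, z_2 \in E \setminus \{p,q\}$, the function $F(\lambda) \defeq \chi\bigl(H(\lambda,z_1), H(\lambda,z_2), H(\lambda,p), H(\lambda,q)\bigr)$ is holomorphic on $\D$ and, by injectivity of $H$ in the $z$-variable, takes values in $Y$; since $F(0) = \chi(z_1,z_2,p,q)$, the Schwarz--Pick inequality gives, writing $d_Y$ for the hyperbolic distance on $Y$,
\[
  d_Y\bigl(F(\lambda),\, \chi(z_1,z_2,p,q)\bigr) \le d_\D(\lambda,0).
\]
I expect the main obstacle to be converting this into the Euclidean language of quasisymmetry. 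Because the density of $d_Y$ blows up logarithmically near each puncture, the displayed bound must be unwound --- using the standard asymptotics of the thrice-punctured sphere metric and a case analysis according to which puncture the two cross-ratios lie near --- into the statement that $z \mapsto H(\lambda,z)$ is $\eta$-quasisymmetric on $E$, with an $\eta$ satisfying $\eta(t) \to 0$ as $t \to 0$ and depending only on $d_\D(0,\lambda)$; in particular the modulus of quasisymmetry is locally uniform in $\lambda$.

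Granting this estimate, the remaining steps are routine. Since a quasisymmetric embedding of $E$ sends Cauchy sequences to Cauchy sequences and is uniformly continuous on bounded sets, the $\eta$-quasisymmetry inequality extends by density and continuity to $\overline{E}$; the resulting map $z \mapsto H(\lambda,z)$ on $\overline{E}$ is non-constant and therefore injective, hence a quasisymmetric embedding, which is part (b). For fixed $z \in \overline{E}$ and any $z_k \in E$ with $z_k \to z$, the estimate forces $\lambda \mapsto H(\lambda,z_k)$ to be Cauchy locally uniformly on $\D$; the limit is holomorphic in $\lambda$, independent of the approximating sequence, agrees with the original motion on $E$, and satisfies $H(0,z) = z$, so we take it as the extended motion. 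Joint continuity (a) follows by combining holomorphy in $\lambda$ (locally uniform in $z$, by the estimate) with quasisymmetric --- hence uniform --- continuity in $z$ (locally uniform in $\lambda$), and uniqueness of the extension is forced by (a) since $E$ is dense in $\overline{E}$.
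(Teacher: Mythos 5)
The paper does not prove this statement: it is quoted verbatim as \cite[Theorem~12.1.2]{AIM09} and used as a black box (in Corollary~\ref{cor:all_or_nothing} and Proposition~\ref{prop_strongly_discrete}). There is therefore no in-paper proof to compare against; I can only assess your argument on its own merits.

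Your overall strategy --- reduce to $U=\D$, $\lambda_0=0$, then pump the hyperbolic Schwarz--Pick inequality for holomorphic maps into $\CC\setminus\{0,1,\infty\}$ through cross-ratios --- is exactly the classical Mañé--Sad--Sullivan engine, and it is also how the cited source proceeds, so the route is sound. Two points need repair, though. First, fixing two base points $p,q$ and only controlling cross-ratios of the form $\chi\bigl(H(\lambda,z_1),H(\lambda,z_2),H(\lambda,p),H(\lambda,q)\bigr)$ is not enough: quasisymmetry of $z\mapsto H(\lambda,z)$ as an embedding into the sphere is equivalent to quasi-Möbius distortion control over \emph{all} quadruples in $E$, and cross-ratios of quadruples missing $p,q$ are not algebraically recoverable from the ones you control. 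The fix is cost-free --- drop the base points and apply the same Schwarz--Pick argument to every quadruple $(z_1,z_2,z_3,z_4)$ of distinct points of $E$ --- but as written the step ``is $\eta$-quasisymmetric on $E$'' does not follow. Second, ``non-constant and therefore injective'' is a non-sequitur. Injectivity of the extension to $\overline E$ should instead be read off from the cross-ratio estimate itself: if $z\neq z'$ in $\overline E$ and $z_k\to z$, $z_k'\to z'$ in $E$, the initial values $F_k(0)=\chi(z_k,z_k',w_3,w_4)$ converge to a point of $Y$ distinct from $1$, so the uniform bound $d_Y(F_k(\lambda),F_k(0))\le d_\D(0,\lambda)$ prevents $F_k(\lambda)$ from approaching the puncture at $1$, i.e.\ prevents $H(\lambda,z)=H(\lambda,z')$. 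Finally, you flag but do not carry out the conversion of the hyperbolic bound near the punctures into a quantitative modulus $\eta$; that is legitimate to defer in a sketch, and your instinct that the logarithmic blow-up of the hyperbolic density yields Hölder-type (hence quasi-Möbius) distortion is correct, but it is where the real technical work lies.
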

\begin{remark}
  Złodkowski later proved a stronger version of the $\lambda$-lemma,
  which says that $H$ actually extends (non-uniquely) to a holomorphic
  motion of $\CC$. For more background and proofs, see
  \cite[Ch.~12]{AIM09} or \cite[Sect.~5.2]{hubbard}.
\end{remark}

The key fact to proving \thmAref is the following lemma.

\begin{lemma}[Holomorphic motions of grand orbits] \label{holomotion}
  Let $U \subset \C\setminus \widehat{J}$ be a simply connected open
  set and $\la_0\in U$. Then there exists a unique holomorphic motion
  \[
    \begin{array}{rccl}
      H: & U  \times  \Orb(\la_0) &\longrightarrow & \chat\\
         &(\la, z) \quad \quad \ &\longmapsto &H(\la,z)\eqdef h_z(\la)
    \end{array}
  \]
  such that $H(\lambda, \cdot)$ maps $\Orb(\la_0)$ to $\Orb(\la)$,
  preserving orbit relations, i.e., if $z \in \Orb(\la_0)$ with
  $f^n(z) = f^m(\la_0)$, then $f^n(H(\lambda,z)) = f^m(\lambda)$.
\end{lemma}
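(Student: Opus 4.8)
The plan is to construct $H$ by lifting the motion of $\lambda_0$ through inverse branches of the iterates of $f$, using that $\widehat F(f)=\C\setminus\widehat J$ is completely invariant and that, by the discussion in the introduction, $f$ restricts to a holomorphic covering map $\widehat F(f)\to\widehat F(f)$, in particular a local biholomorphism there. Two preliminary facts will be used throughout. First, since $\widehat J$ is closed, completely invariant, and contains $\Per(f)$, no point of $\widehat F(f)$ is preperiodic; in particular $\Orb(\lambda_0)\subset\widehat F(f)$, and whenever $f^p(x)=f^q(x)$ for some $x\in\widehat F(f)$ and $p,q\ge0$ one must have $p=q$ (otherwise $f^{\min(p,q)}(x)$ would be a periodic point lying in $\widehat F(f)$). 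Second, since $U$ is simply connected, every holomorphic map $U\to\widehat F(f)$ lifts through $f$, and hence by iteration through any $f^N$; such a lift is unique once its value at one point is prescribed, and automatically holomorphic because $f$ is a local biholomorphism on $\widehat F(f)$.

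For $z\in\Orb(\lambda_0)$, I would choose $n,m\ge0$ with $f^n(z)=f^m(\lambda_0)$ and let $h_z\colon U\to\widehat F(f)$ be the unique holomorphic lift of $\lambda\mapsto f^m(\lambda)$ through $f^n$ satisfying $h_z(\lambda_0)=z$ (obtained by lifting through $f$ exactly $n$ times, the base-point values being dictated by $h_z(\lambda_0)=z$), and then set $H(\lambda,z)\defeq h_z(\lambda)$. The first point to settle is that $h_z$ does not depend on the chosen pair $(n,m)$: from $f^n\circ h_z=f^m|_U$ one gets $f^{n+k}\circ h_z=f^{m+k}|_U$, so $h_z$ is also the lift attached to $(n+k,m+k)$ for every $k\ge0$; and if $(n,m)$ and $(n',m')$ are two admissible pairs, then applying $f^{n'}$ and $f^{n}$ to the defining equalities yields $f^{m+n'}(\lambda_0)=f^{n+n'}(z)=f^{m'+n}(\lambda_0)$, whence $m+n'=m'+n$ by the non-preperiodicity of $\lambda_0$; thus both pairs are shifts of the common pair $(n+n',m+n')$ and produce the same lift.

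Next I would verify that $H$ is the desired holomorphic motion. Holomorphy in $\lambda$ and $H(\lambda_0,\cdot)=\id$ hold by construction, while $f^n(H(\lambda,z))=f^m(\lambda)$ is exactly the lifting identity, so $H(\lambda,\cdot)$ sends $\Orb(\lambda_0)$ into $\Orb(\lambda)$ and preserves orbit relations in the sense stated. For injectivity of $H(\lambda_1,\cdot)$ at a fixed $\lambda_1\in U$: if $h_z(\lambda_1)=h_{z'}(\lambda_1)=:p_1$, the lifting identities give $f^n(p_1)=f^m(\lambda_1)$ and $f^{n'}(p_1)=f^{m'}(\lambda_1)$; applying $f^{n'}$ and $f^{n}$ and using that $\lambda_1\in\widehat F(f)$ is not preperiodic, we get $m+n'=m'+n$, so $h_z$ and $h_{z'}$ are both holomorphic lifts of $f^{m+n'}|_U$ through $f^{n+n'}$ agreeing at $\lambda_1$, hence equal on the connected set $U$; evaluating at $\lambda_0$ gives $z=z'$. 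Finally, $H$ is unique among holomorphic motions with the stated property: if $H'$ is another such, then for each $z$ the map $\lambda\mapsto H'(\lambda,z)$ takes values in $\Orb(\lambda)\subset\widehat F(f)$, is holomorphic, and satisfies $f^n(H'(\lambda,z))=f^m(\lambda)$ with $H'(\lambda_0,z)=z$, so it is a holomorphic lift of $f^m$ through $f^n$ with value $z$ at $\lambda_0$ and therefore coincides with $h_z$.

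I expect the main obstacle to be the bookkeeping in the two places where distinct exponents must be reconciled — the independence of $h_z$ from the pair $(n,m)$, and the injectivity of $H(\lambda,\cdot)$ — and in both cases the crux is the elementary but essential observation that $\widehat F(f)$ contains no preperiodic points, which rigidifies coincidences among forward iterates of any point of $U$. Everything else is the standard lifting and monodromy machinery, available precisely because $U$ is simply connected and contained in $\widehat F(f)$, where $f$ is an unbranched self-covering.
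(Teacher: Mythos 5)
Your construction is the same as the paper's in essence: for each $z\in\Orb(\lambda_0)$ with $f^n(z)=f^m(\lambda_0)$, define $h_z$ to be the branch of $f^{-n}\circ f^m$ over the simply connected set $U$ satisfying $h_z(\lambda_0)=z$, and set $H(\lambda,z)=h_z(\lambda)$. The paper builds $h_z$ by choosing a local inverse branch of $f^n$ near $f^m(\lambda_0)$ and analytically continuing via the Monodromy Theorem; you instead invoke the fact that $f$ restricted to $\widehat F(f)$ is an unbranched self-covering, so holomorphic lifts of $f^m|_U$ through $f^n$ exist and are determined by their value at one point. These are two phrasings of the same mechanism. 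Where your argument genuinely diverges is the injectivity step: the paper normalizes to $m_1=m_2$ and then splits into the two cases $n_1=n_2$ (producing a critical point of $f^n$, hence a point of $\widehat S$) and $n_1\neq n_2$ (producing a preperiodic point); your argument instead shows $m+n'=m'+n$ directly from non-preperiodicity of $\lambda_1$ and then identifies $h_z$ and $h_{z'}$ as lifts of the \emph{same} map $f^{m+n'}$ through the \emph{same} covering $f^{n+n'}$ that agree at $\lambda_1$, hence coincide. This is cleaner and avoids the case split entirely; the same trick also tidies up your well-definedness argument. Both versions ultimately rest on the two crucial exclusions you identify at the outset: no critical values and no preperiodic points in $\widehat F(f)$.

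One piece the paper proves that you omit: they establish the equality $H(\lambda_1,\Orb(\lambda_0))=\Orb(\lambda_1)$, not merely the inclusion. You only show $H(\lambda,\cdot)$ sends $\Orb(\lambda_0)$ \emph{into} $\Orb(\lambda)$. Surjectivity is used explicitly in Corollary~\ref{cor:all_or_nothing}, where $H(\lambda_1,\cdot)$ is treated as a bijection of the two grand orbits. The repair is short and already implicit in your toolkit: given $w\in\Orb(\lambda_1)$ with $f^{n}(w)=f^{m}(\lambda_1)$, let $h^1_w$ be the lift of $f^m|_U$ through $f^n$ with $h^1_w(\lambda_1)=w$; then $z_0\defeq h^1_w(\lambda_0)\in\Orb(\lambda_0)$, and by uniqueness of lifts $h_{z_0}=h^1_w$, so $H(\lambda_1,z_0)=w$. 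You should add a sentence to this effect, since later uses of the lemma rely on it.
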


\begin{proof}
  If $z\in \Orb(\la_0)$, we have that $f^n(z)=f^m(\la_0)$ for some
  $n,m\in\N$. Since $z,\la_0\notin \widehat{S}$, we know that
  $(f^n)'(z)\neq 0$, so there exists a holomorphic branch $g$
    of $f^{-n}$ in a neighborhood of $\lambda_m = f^m(\lambda_0)$ such
    that $g(\lambda_m) = z$. With this branch, we define
    $h_z = g \circ f^m$, so that $h_z$ is a holomorphic branch of
    $f^{-n} \circ f^m$, defined in a neighborhood of $\lambda_0$, with
    $h_z(\lambda_0) = z$. Now let $\gamma$ be any continuous path in
    $U$ starting at $\lambda_0$. Since $U_m = f^m(U)$ is disjoint from
    $\widehat{S}$, every point in $U_m$ is a regular value for
    $f^n$. In particular, this implies that every local holomorphic
    branch of $f^{-n}$ can be analytically continued along any
    continuous path in $U_m$. In particular, the branch $g$ of
    $f^{-n}$ has an analytic continuation along
    $\gamma_m = f^m \circ \gamma$, so $h_z = g \circ f^m$ has an
    analytic continuation along $\gamma$. Since $U$ is simply
    connected, the Monodromy Theorem then shows that $h_z$ extends to
    a holomorphic function in $U$. By the Permanence Principle, this
    analytic continuation is the unique holomorphic branch of
    $f^{-n} \circ f^m$ in $U$ satisfying $h_z(\lambda_0) = z$. It
    satisfies the functional equation
    $f^n(h_z(\lambda)) = f^m(\lambda)$ for all $\lambda \in U$, so
    this construction uniquely defines a function
    $H(\lambda,z) = h_z(\lambda)$ for
    $(\lambda,z) \in U \times \GO(\lambda_0)$ with the following
    properties:
    \begin{itemize}
    \item For each $z \in \GO(\lambda_0)$, the function $\lambda
      \mapsto H(\lambda,z)$ is holomorphic in $U$.
    \item If $f^n(z) = f^m(\lambda_0)$, then $f^n(H(\lambda,z)) =
      f^m(\lambda)$.
    \end{itemize}
    The second assertion implies in particular that for any
    $\lambda_1 \in U$ we have
    $H(\lambda, \GO(\lambda_0)) \subset \GO(\lambda_1)$. We claim that,
    in fact, equality holds. In order to show this, let
    $z \in \GO(\lambda_1)$. Then there exist $m,n \in \N$ with
    $f^n(z) = f^m(\lambda_1)$. Repeating the argument above with
    $\lambda_1$ instead of $\lambda_0$, we obtain a unique holomorphic
    branch $h_z^1$ of $f^{-n} \circ f^m$ in $U$, satisfying
    $h_z^1(\lambda_1) = z$. Defining
    $z_0 = h_z^1(\lambda_0) \in \GO(\lambda_0)$, the function $h_z^1$
    is also the unique holomorphic branch of $f^{-n} \circ f^m$ with
    $h_z^1(\lambda_0) = z_0$, so by construction we have that
    $z = h_{z_0}(\lambda_1) \in H(\lambda_1,\GO(\lambda_0))$,
    establishing the opposite inclusion $\GO(\lambda_1) \subset
    H(\lambda_1,\GO(\lambda_0))$ and hence equality
    $H(\lambda_1,\GO(\lambda_0)) = \GO(\lambda_1)$.

    We still need to show that the map $H(\la,\cdot)$ is injective for
    every $\la \in U$. Suppose to the contrary that
    $H(\lambda^*,\cdot)$ is not injective for some $\lambda^* \in
    U$. Then there exist distinct $z_1, z_2 \in \GO(\lambda_0)$ with
    $z^* = h_{z_1}(\lambda^*) = h_{z_2}(\lambda^*)$. By construction,
    $h_{z_1}$ is a holomorphic branch of $f^{-n_1} \circ f^{m_1}$ for
    some $m_1,n_1 \in \N$, so that $f^{n_1} \circ h_{z_1} =
    f^{m_1}$. Similarly, there exist $m_2, n_2 \in \N$ with
    $f^{n_2} \circ h_{z_2} = f^{m_2}$. Composing either of these
    functional equations with a suitable iterate of $f$, we may assume
    that $m_1 = m_2$, so that
    \begin{equation} \label{defining} f^{n_1}(h_{z_1}(\la)) =
      f^{n_2}(h_{z_2}(\la)) = f^m(\la)
    \end{equation}
    for all $\lambda \in U$. Since
    $h_{z_1}(\la_0) = z_1 \ne z_2 = h_{z_2}(\la_0)$, we have that
    $\lambda^*$ is an isolated solution to the equation
    $h_{z_1}(\la) = h_{z_2}(\la)$.  We now distinguish two cases.

  Suppose $n_1=n_2=n$. Then, since $h_{z_1}(\la)\neq h_{z_2}(\la)$ in
  a punctured neighborhood of $\la^*$, it follows that $z^*$ is a
  solution of multiplicity $\ge 2$ of $f^n(z)=f^m(\la^*)$ and hence
  $(f^n)'(z^*)=0$. Thus $z^*\in \widehat{S}$, which implies that
  $\la^*\in U\cap \widehat{S}$ (because $f^m(\la^*)=f^n(z^*)$), a
  contradiction.
	
  It is enough then to deal with the case $n_1>n_2$, since the reverse
  case is symmetric. In this situation, (\ref{defining}) with
  $\la=\la^*$ implies that $f^{n_2}(z^*)$ is periodic of period
  $n_1-n_2$, and hence $z^*$ is preperiodic (maybe periodic, if
  $n_2=0$). Thus, both $z^*$ and $\lambda^*$ are in $\widehat{J}$,
  leading to a contradiction.  
\end{proof}

As a first consequence, we have that either all or none of the grand orbits of points in a component of $\C\setminus \widehat{J}$ are discrete.
\begin{cor}[No coexistence of discrete and indiscrete grand orbits] \label{cor:all_or_nothing}
	Let $U$ be a component  of $\C\setminus \widehat{J}$. Given any $\lambda_0, \lambda_1\in U$,  $\Orb(\lambda_0)$ is discrete if and only if $\Orb(\lambda_1)$ is discrete.
\end{cor}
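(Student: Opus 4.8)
The plan is to deduce the statement from the holomorphic motion furnished by \Cref{holomotion}, promoted to a genuine homeomorphism by the $\lambda$-lemma, followed by a connectedness argument. The one place calling for care is that a component $U$ of $\C\setminus\widehat{J}$ need not be simply connected, so \Cref{holomotion} cannot be fed $U$ directly as its parameter domain; I would circumvent this by working over small disks inside $U$ and concluding with an open--closed argument. Apart from this wrinkle the proof is short, since the substance is already contained in \Cref{holomotion}.

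First, I would prove a local version: for each $\lambda_*\in U$ there is an open disk $D$ with $\lambda_*\in D\subset U$ such that, for every $\lambda\in D$, the set $\Orb(\lambda)$ is discrete if and only if $\Orb(\lambda_*)$ is discrete. Choose any disk $D$ centered at $\lambda_*$ with $D\subset U\subset\C\setminus\widehat{J}$. Since $D$ is simply connected, \Cref{holomotion} (with basepoint $\lambda_*$) gives a holomorphic motion $H\colon D\times\Orb(\lambda_*)\to\chat$ over $(D,\lambda_*)$ whose time-$\lambda$ map sends $\Orb(\lambda_*)$ bijectively onto $\Orb(\lambda)$ (surjectivity is established within that lemma; injectivity is part of the definition of a holomorphic motion). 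By the $\lambda$-lemma, \Cref{thm:lambda-lemma}, $H$ extends to a holomorphic motion of the closure $\overline{\Orb(\lambda_*)}$ (taken in $\chat$) for which, for each fixed $\lambda\in D$, the map $z\mapsto H(\lambda,z)$ is a quasisymmetric embedding of $\overline{\Orb(\lambda_*)}$ into $\chat$, hence in particular a homeomorphism onto its image. Restricting this homeomorphism to the subset $\Orb(\lambda_*)$, whose image under $H(\lambda,\cdot)$ is precisely $\Orb(\lambda)$, yields a homeomorphism $\Orb(\lambda_*)\to\Orb(\lambda)$. Since ``every point is isolated'' is a topological invariant of a subspace, this proves the local claim.

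Finally, the local step shows that the set $\{\lambda\in U:\Orb(\lambda)\text{ is discrete}\}$ is open in $U$ and has open complement in $U$; as $U$ is connected, one of the two sets is empty, i.e.\ discreteness of $\Orb(\lambda)$ does not depend on $\lambda\in U$. Applying this to $\lambda_0$ and $\lambda_1$ gives the assertion. I do not expect any genuine obstacle: the construction of the holomorphic motion in \Cref{holomotion} is where all the work lies, while here the only technical point is the possible non-simple-connectivity of $U$, handled by the passage through disks and the open--closed argument above.
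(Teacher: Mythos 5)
Your proof is correct and uses the same key ingredients as the paper: Lemma~\ref{holomotion} together with the $\lambda$-lemma (\Cref{thm:lambda-lemma}) to produce a homeomorphism between $\Orb(\lambda_*)$ and $\Orb(\lambda)$, which transports discreteness. The only (cosmetic) difference is that the paper dispenses with your disk-by-disk open--closed argument by choosing a single open simply connected $V\subset U$ containing both $\lambda_0$ and $\lambda_1$ and applying \Cref{holomotion} once over $V$.
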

\begin{proof}
	Let $\la_0,\la_1 \in U$ and let $V\subset U$ be an open simply connected set containing $\la_0$ and $\la_1$. By Lemma \ref{holomotion}, there exists a holomorphic motion $H:V\times \Orb(\la_0) \rightarrow \C$, and $H(\la_1,\Orb(\la_0)) =\Orb(\la_1)$. 
	By the $\la-$Lemma, $H(\la_1,.)$ is a homeomorphism and hence the image of a discrete set must be a discrete set, which concludes the proof.
\end{proof}
 
In addition, we are able to prove that discreteness of orbits is equivalent to this seemingly stronger notion.
\begin{defn}[Strongly discrete GO] We say that the grand orbit of a point $z\in \C\setminus \widehat{J}$ is {\em strongly discrete} if there exists a neighborhood $U$ of $z$ such that $\Orb(z')\cap U=\{z'\}$ for every $z'\in U$.
\end{defn}

\begin{observation}\label{obs_strong_discrete} The definition of $\GO(z)$ being strongly discrete is independent of the representative of $\GO(z)$ for which such a neighborhood $U$ exists. In other words, the grand orbit of $z$ is strongly discrete if and only if for all $w\in \GO(z)$ there exists a neighborhood $U_w$ of $w$ such that $\Orb(w')\cap U_w=\{w'\}$ for every $w'\in U_w$. To see this, suppose that $\GO(z)$ is strongly discrete and let $U_z$ be a neighborhood of $z$ such that
\begin{equation}\label{eq_strong}
	\Orb(z')\cap U_z=\{z'\} \quad \text{ for every } z'\in U_z.
\end{equation}
Assume for the sake of contradiction that there is $w$ such that $f^n(w)=f^m(z)$ for some $n,m\geq 0$, and so that in any neighborhood $W$ of $w$ there are at least two elements of the same grand orbit. But for $W$ of sufficiently small diameter, $f^m(W)\subset f^n(U_z)$, and by \eqref{eq_strong}, $f^n(U_z)$ contains a unique representative of each grand orbit of the points it comprises. This means that $\deg(f^m\vert_W)\geq 2$ for any neighborhood $W$ of $w$, which contradicts the fact that since $w\notin\widehat{J}$, $f^m$ is a local homeomorphism in some neighborhood of $w$.
\end{observation}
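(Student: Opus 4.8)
(Proof plan.) One implication is free: if every $w\in\GO(z)$ admits a neighborhood $U_w$ with $\GO(w')\cap U_w=\{w'\}$ for all $w'\in U_w$, then taking $w=z$ gives precisely that $\GO(z)$ is strongly discrete. So the plan is to prove the converse: starting from a single witnessing neighborhood $U_z$ of $z$, I will manufacture a witnessing neighborhood at every other point of the grand orbit.

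First I would isolate the input that makes everything local. Since $z\notin\widehat{J}$ and $\widehat{J}$ is completely invariant, the whole grand orbit $\GO(z)$ lies in $\C\setminus\widehat{J}$. Moreover $\Crit(f)\subset\widehat{S}\subset\widehat{J}$ (a critical point lies in the grand orbit of its critical value), and $\widehat{J}$ is backward invariant, so $\Crit(f^k)\subset\widehat{J}$ for every $k$. Hence every iterate $f^k$ is a local biholomorphism near every point of $\GO(z)$; in particular, at each such point a small enough neighborhood can be chosen on which a prescribed iterate is injective with open image.

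The heart of the argument is two one-step lemmas. \emph{Push-forward}: if $V$ is an open neighborhood of some $\zeta\notin\widehat{J}$ witnessing strong discreteness of $\GO(\zeta)$, shrunk so that $f|_V$ is injective with $f(V)$ open, then $f(V)$ witnesses strong discreteness of $\GO(f(\zeta))$; the verification is that if $a=f(a')$ and $b=f(b')$ lie in $f(V)$ with $f^p(a)=f^q(b)$, then $f^{p+1}(a')=f^{q+1}(b')$, so $a',b'\in V$ are grand-orbit equivalent, hence $a'=b'$ and $a=b$. \emph{Pull-back}: if $V$ is an open neighborhood of $f(\zeta)$ witnessing strong discreteness of $\GO(f(\zeta))$, and $W$ is a neighborhood of $\zeta\notin\widehat{J}$ with $f|_W$ injective and $f(W)\subset V$, then $W$ witnesses strong discreteness of $\GO(\zeta)$: grand-orbit-equivalent $z',z''\in W$ have grand-orbit-equivalent images $f(z'),f(z'')\in f(W)\subset V$, forcing $f(z')=f(z'')$ and then $z'=z''$ by injectivity of $f|_W$.

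To finish, given $w\in\GO(z)$ choose $n,m\ge 0$ with $f^n(w)=f^m(z)$. Iterating the push-forward lemma $m$ times starting from $z$ (each intermediate point $f^j(z)$ lies outside $\widehat{J}$, so the lemma applies) shows that $\GO(f^m(z))=\GO(f^n(w))$ is strongly discrete; then iterating the pull-back lemma $n$ times down the chain $f^n(w),\dots,f(w),w$ shows that $\GO(w)$ is strongly discrete, as required. I expect the only delicate point to be the bookkeeping: at each one-step move one must transfer the genuine \emph{grand}-orbit uniqueness property (rather than a mere forward-orbit statement), which is exactly what the exponent manipulations above are for, and one must make sure the local-homeomorphism hypothesis is available at every intermediate point — which is guaranteed by the complete invariance of $\widehat{J}$ recorded at the outset.
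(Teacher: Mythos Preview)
Your proof is correct and uses the same two underlying ingredients as the paper's argument: that grand-orbit equivalence is preserved under $f$ (so the unique-representative property pushes forward), and that every iterate of $f$ is a local homeomorphism at points outside $\widehat{J}$ (so the property pulls back). The organization differs. The paper argues by contradiction in one shot: it observes that a forward image $f^k(U_z)$ still contains at most one representative of each grand orbit, maps a small neighborhood $W$ of $w$ forward by a suitable iterate into such an image, and reads off that this iterate fails to be injective on every neighborhood of $w$, contradicting $w\notin\widehat{J}$. You instead give a direct construction, factoring the passage from $z$ to $w$ into single applications of $f$ and local inverse branches, and isolating two clean one-step lemmas which you then iterate. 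Your version is a bit longer but makes the bookkeeping completely transparent; the paper's is terser. As a minor remark, your push-forward lemma does not actually require $f|_V$ to be injective---your own verification only uses that $a',b'\in V$ are grand-orbit equivalent, hence equal---but the superfluous hypothesis does no harm.
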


\begin{prop}[Discrete grand orbits are strongly discrete] \label{prop_strongly_discrete}
	Let $z\in \C\setminus \widehat{J}$. The grand orbit of $z$ is discrete if and only if it is strongly discrete. 
\end{prop}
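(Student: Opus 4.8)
The plan is to handle the two implications separately. That a strongly discrete grand orbit is discrete is immediate from Observation~\ref{obs_strong_discrete}: if $\GO(z)$ is strongly discrete, then each $w \in \GO(z)$ admits a neighborhood $U_w$ with $\GO(w') \cap U_w = \{w'\}$ for all $w' \in U_w$; specializing to $w' = w$ gives $\GO(z) \cap U_w = \{w\}$, so every point of $\GO(z)$ is isolated in $\GO(z)$, i.e.\ $\GO(z)$ is discrete.

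For the converse, the idea is to move the whole grand orbit by the holomorphic motion of Lemma~\ref{holomotion} and squeeze a \emph{uniform} estimate out of its continuity. First I would fix a simply connected open neighborhood $U_0 \subsetneq \C$ of $z$ with $U_0 \subset \C \setminus \widehat{J}$, and apply Lemma~\ref{holomotion} with basepoint $z$ to get a holomorphic motion $H\colon U_0 \times \GO(z) \to \CC$ preserving orbit relations; thus for every $\lambda \in U_0$ the map $w \mapsto H(\lambda, w)$ is a bijection of $\GO(z)$ onto $\GO(\lambda)$, and the branch corresponding to $w = z$ is the identity, so $H(\lambda, z) = \lambda$. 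Since $\GO(z)$ is discrete, $z$ is isolated in it, so I can choose a disk $D$ with $z \in D$, $\overline{D} \subset U_0$, and $\GO(z) \cap \overline{D} = \{z\}$.

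Next I would use the $\lambda$-lemma (Theorem~\ref{thm:lambda-lemma}) to extend $H$ to a jointly continuous holomorphic motion $\widehat{H}$ of the compact set $E \defeq \overline{\GO(z)}$ (closure in $\CC$), with $\widehat{H}(z,\cdot) = \id$. Let $\sigma$ be the spherical metric on $\CC$, write $B_\sigma(z,\delta) \defeq \{ w \in \CC : \sigma(w,z) < \delta\}$, and set $K \defeq E \setminus D$, a compact set containing $\GO(z) \setminus \{z\}$ but not $z$. The function $\lambda \mapsto \inf_{w \in K} \sigma(\widehat{H}(\lambda, w), z)$ is continuous on $U_0$ (by joint continuity of $\widehat{H}$ and compactness of $K$) and equals $\inf_{w \in K} \sigma(w, z) > 0$ at $\lambda = z$; choosing $\delta>0$ below half this value, there is a neighborhood $\widetilde{U}$ of $z$, which I shrink so that $\widetilde{U} \subset U_0 \cap B_\sigma(z,\delta)$, on which this infimum stays above $\delta$. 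Then for every $\lambda \in \widetilde{U}$,
\[
  \GO(\lambda) = \widehat{H}(\lambda, \GO(z)) = \{\lambda\} \cup \widehat{H}\bigl(\lambda, \GO(z) \setminus \{z\}\bigr) \subset \{\lambda\} \cup \widehat{H}(\lambda, K),
\]
and $\widehat{H}(\lambda, K)$ is disjoint from $B_\sigma(z,\delta) \supseteq \widetilde{U}$; hence $\GO(\lambda) \cap \widetilde{U} = \{\lambda\}$. As this holds for all $\lambda \in \widetilde{U}$, the set $\GO(z)$ is strongly discrete, witnessed by $\widetilde{U}$.

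I expect the crux to be precisely the passage from continuity of each individual branch $\lambda \mapsto H(\lambda, w)$ to simultaneous control of all of them, since $\GO(z)$ is in general countably infinite and without uniformity some branch could wander back near $z$. Passing to the closure $\overline{\GO(z)}$ in the Riemann sphere and invoking joint continuity of the extended motion on the \emph{compact} set $K$ is exactly what supplies this uniformity; the only technical nuisance is to measure distances spherically, since $\overline{\GO(z)}$ may accumulate at $\infty$.
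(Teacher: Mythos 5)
Your proof is correct and relies on exactly the same ingredients as the paper's: the holomorphic motion from Lemma~\ref{holomotion}, its extension to $\overline{\GO(z)}$ via the $\lambda$-lemma, joint continuity, and compactness of $\overline{\GO(z)}$ in $\CC$. The only difference is one of presentation — you derive the uniform neighborhood $\widetilde{U}$ directly from continuity of $\lambda\mapsto\inf_{w\in K}\sigma(\widehat{H}(\lambda,w),z)$ on the compact set $K$, whereas the paper argues by contradiction, extracting a convergent subsequence of witnesses $z_n\in\GO(\lambda_0)\setminus U_0$; these are the contrapositive and direct forms of the same argument.
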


\begin{proof}
	By Observation \ref{obs_strong_discrete}, it is immediate that strongly discrete implies discrete.
	
	Now suppose that $\lambda_0\in \C\setminus \widehat{J}$ and $\Orb(\la_0)$ is discrete but not strongly discrete. Then, since $\Orb(\la_0)$ is discrete, there exists a simply connected neighborhood $U_0$ of $\la_0$ such that $\Orb(\la_0) \cap U_0=\{\la_0\}$, and two sequences $(x_n)$ and $(x_n')$ in $U_0$ such that $x_n'\in \Orb(x_n)$ for all $n\geq 0$ and $x_n,x_n' \to \la_0$ when $n\to \infty$.
	
	By Lemma \ref{holomotion} there exists a holomorphic motion $H(\la,z)$, where $H:U_0\times \Orb(\la_0) \to \C$. 
	In particular, $H(x_n, \Orb(\la_0))=\Orb(x_n)$. 
	By the $\lambda$-lemma (Theorem \ref{holomotion}),
$H$ extends to a holomorphic motion of the closure.
	
	 Hence, for every $n\geq 0$ we may choose $z_n\in \Orb(\la_0)$ such that 
	\[
	H(x_n, z_n)=x'_n.
	\]
	Observe that $z_n\notin U_0$ for any $n\geq 0$, since $\Orb(\la_0) \cap U_0=\{\la_0\}$. Since  $\chat\setminus U_0 $ is compact, there exists a convergent subsequence 
	\[
	z_{n_k} \rightarrow z_\infty \in \overline{\GO(\lambda_0)}\setminus U_0 \text{\ as $k\to\infty$}.
	\]
	But we know that $x_n\to \la_0$. Hence, as $k\to\infty$
	\[
	\la_0 \longleftarrow x'_{n_k}=H(x_{n_k},z_{n_k}) \longrightarrow H(\la_0,z_\infty) =z_\infty \notin U_0,
	\]
	where the right hand side follows from $H$ being jointly continuous and $H(\la_0,.)$ being the identity.
	This is a contradiction and hence the Proposition is proven.
\end{proof}

As a consequence, we obtain the following criterion for \textit{discreteness} of grand orbits.

	\begin{cor}[Discreteness criterion]\label{cor:discrete}  Let $z\in \C\setminus \widehat{J}$. The grand orbit of $z$ is discrete if and only if there exists a neighborhood $U$ of $z$ such that  $f^n(U)\cap f^m(U)=\emptyset$ for all $n\neq m$ and  $f\vert_{f^n(U)}$ is injective for all $n\geq 0$.
	\end{cor}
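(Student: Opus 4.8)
The plan is to reduce everything to \Cref{prop_strongly_discrete}, which identifies discreteness of $\GO(z)$ with \emph{strong} discreteness: the existence of a neighbourhood $U_0$ of $z$ such that $\GO(z')\cap U_0=\{z'\}$ for every $z'\in U_0$. Since $\widehat{J}$ is closed, I would shrink $U_0$ so that $U_0\subset\C\setminus\widehat{J}$. It then suffices to show that strong discreteness of $\GO(z)$ is equivalent to the existence of a neighbourhood $U$ of $z$ with $f^n(U)\cap f^m(U)=\emptyset$ for all $n\neq m$ and $f\vert_{f^n(U)}$ injective for all $n\ge 0$; I will call this latter condition \emph{$U$ spreading out injectively}.

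For the direction \emph{spreading out injectively $\Rightarrow$ strongly discrete}, I would take $z',w'\in U$ with $w'\in\GO(z')$, so that $f^n(w')=f^m(z')$ for some $n,m\ge 0$, and assume $n\ge m$ (the reverse case being symmetric). Then $f^n(w')\in f^n(U)$ and $f^m(z')\in f^m(U)$ coincide, so $f^n(U)\cap f^m(U)\neq\emptyset$, forcing $n=m$. Hence $f^n(w')=f^n(z')$; since $f$ is injective on each $f^j(U)$ and $f(f^j(U))=f^{j+1}(U)$, the composition $f^n\vert_U$ is injective, so $w'=z'$. Thus $\GO(z')\cap U=\{z'\}$ for every $z'\in U$, which means $\GO(z)$ is strongly discrete, hence discrete by \Cref{prop_strongly_discrete}.

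For the converse I would show that the neighbourhood $U_0$ above already spreads out injectively. If $f^n(U_0)\cap f^m(U_0)\neq\emptyset$ for some $n>m$, I would pick $a,b\in U_0$ with $f^n(a)=f^m(b)$; then $a\in\GO(b)$, so strong discreteness gives $a=b$, whence $f^n(a)=f^m(a)$ makes $f^m(a)$ periodic and $a$ preperiodic. Since $\widehat{J}$ is completely invariant and contains all periodic points, this puts $a\in\widehat{J}$, contradicting $a\in U_0\subset\C\setminus\widehat{J}$; so the images $f^n(U_0)$ are pairwise disjoint. Likewise, if $f$ were not injective on some $f^n(U_0)$, there would be $a\neq b$ in $U_0$ with $f^{n+1}(a)=f^{n+1}(b)$, i.e.\ $a\in\GO(b)$, again contradicting strong discreteness.

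I do not anticipate a genuine obstacle here: the corollary is essentially a restatement of \Cref{prop_strongly_discrete}. The only step needing a little care is the degenerate case $a=b$ when checking disjointness of the images $f^n(U_0)$, where one must invoke that periodic (hence preperiodic) points lie in $\widehat{J}$ and that $U_0$ was chosen inside $\C\setminus\widehat{J}$.
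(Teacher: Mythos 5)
Your proposal is correct and follows essentially the same route as the paper: reduce to strong discreteness via Proposition~\ref{prop_strongly_discrete}, and then verify the equivalence by the same elementary orbit-chasing (disjointness of the images $f^n(U)$ together with injectivity of $f$ on them), including the same observation that the degenerate case $a=b$ is excluded because it would produce a periodic point inside $U\subset\C\setminus\widehat J$. The only cosmetic difference is that you give one implication directly where the paper argues by contradiction; the substance is identical.
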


\begin{proof}
  If the grand orbit of $z$ is discrete then it is strongly discrete,
  and so there exists a neighborhood $U \subset\C\setminus \widehat{J}$ of $z$ such that every point
  is the unique representative of its grand orbit in $U$. By
  contradiction, suppose $w\in f^n(U)\cap f^m(U)$, for some
  $m>n$. This implies there exists $w'\in f^n(U)$ such that
  $f^{m-n}(w')=w.$ Note that
  $w\neq w'$, since periodic points belong to $\widehat{J}$. Hence,
  $w$ and $w'$ are in the same grand orbit, and so are their
  (distinct) preimages in $U$ under $f^n$, a contradiction. Likewise,
  if $f|_{f^n(U)}$ was not injective for some $n\geq 0$, there would
  exist two distinct points $w,w'\in f^n(U)$ mapping to the same point
  in $f^{n+1}(U)$, and hence belonging to the same grand orbit. Taking
  their preimages under $f^n$ in $U$ leads again to a contradiction.

  Now suppose that the disjointness and injectivity conditions are
  satisfied for a certain neighborhood $U$, and assume by
  contradiction that the grand orbit is not discrete. It follows by
  Proposition~\ref{prop_strongly_discrete} that the grand orbit is not
  strongly discrete, i.e., that $U$ contains two points $w,w'\in U$
  such that $f^n(w)=f^m(w')\eqdef w^*$ for some $m,n\in \N$. If $n=m$,
  this violates the injectivity of $f|_{f^k(U)}$ for some $k\leq n-1$,
  so it is not possible. If, say, $m>n$, then
  $w^*\in f^n(U)\cap f^m(U)$, a contradiction.
\end{proof}

We can now easily deduce \thmAref.
\begin{proof}[Proof of \thmAref]
  Let $V$ be the grand orbit of a component of $\widehat{F}(f)$ for an
  entire map $f$. Recall that we defined the orbit relation in $V$ to
  be indiscrete if $\GO(z)$ is an indiscrete set for some $z\in
  V$. Since, by Corollary~\ref{cor:all_or_nothing} and by
  Proposition~\ref{prop_strongly_discrete}, either the grand orbit of
  all or of none of the points in $V$ is discrete, and being discrete
  is equivalent to being strongly discrete, the equivalence between
  \eqref{thmA:a}, \eqref{thmA:b} and \eqref{thmA:c} follows. Suppose
  that, additionally, $V$ is in the grand orbit of a wandering domain
  $U$, and let $z\in V$. By passing to an iterate, we may assume that
  $z\in U$. Let $W\defeq U\cap V$, which contains the point $z$. Then,
  since $f^n(U)\cap f^m(U)=\emptyset$ for all $n\neq m$,
  $f^n(W)\cap f^m(W)=\emptyset$ for all $n\neq m$, and \eqref{thmA:b}
  and \eqref{thmA:d} are equivalent by Corollary \ref{cor:discrete}.
\end{proof}

In \cite{mcwd}, a thorough description of the dynamics of entire maps
in multiply connected wandering domains is provided. The combination
of the findings in \cite{mcwd} with \thmAref results in the following
concise proof of the fact that the orbit relation is always indiscrete
in such domains.
 
\begin{cor}[GO relations in multiply connected wandering
  domains] \label{prop:mcwd} Let $f$ be an entire map with a multiply
  connected wandering domain $U$. Then, the grand orbit relation in
  every component of $\Orb(U)\setminus \widehat{S}(f)$ is indiscrete.
\end{cor}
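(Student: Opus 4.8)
The plan is to derive the statement from \thmAref together with the description of the dynamics in multiply connected wandering domains in \cite{mcwd}. Fix a component $V$ of $\Orb(U) \setminus \widehat{S}(f)$. The first step is to check that $V$ lies in $\widehat{F}(f)$: by \eqref{jhat} one has $\widehat{J}(f) = \cl{\Orb(S(f) \cup \{\text{Siegel periodic points}\})}$, and the grand orbit of a Siegel periodic point is contained in $J(f)$ together with (pre)periodic Fatou components, whose closures are disjoint from the open wandering Fatou components constituting $\Orb(U)$; hence $\widehat{J}(f)$ and $\widehat{S}(f)$ coincide on $\Orb(U)$, so $V$ is contained in a component $V'$ of $\widehat{F}(f)$, and $V'$ lies in turn in the Fatou component containing it, which is part of $\Orb(U)$. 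Thus $\GO(V')$ is the grand orbit of a component of $\widehat{F}(f)$ contained in the grand orbit of the wandering domain $U$, so \thmAref applies to it; since by \Cref{cor:all_or_nothing} discreteness is an all-or-nothing property across $V' \supseteq V$, the grand orbit relation on $V$ is discrete if and only if it is discrete on $\GO(V')$, which by \thmAref\eqref{thmA:d} happens if and only if there is a simply connected domain $W \subseteq \GO(V')$ with $f\vert_{f^n(W)}$ injective for all $n \ge 0$. It therefore suffices to show that no such $W$ exists.

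So assume for contradiction that $W$ exists. Then each $f^n\vert_W = f\vert_{f^{n-1}(W)} \circ \cdots \circ f\vert_W$ is injective, hence $W_n \defeq f^n(W)$ is a simply connected subdomain of the wandering component containing it, and $f\vert_{W_n}$ is conformal onto $W_{n+1}$. Passing to a small round disk $D$ with $\overline{D}$ compactly contained in that component — which only shrinks each $f^n(W)$ and so preserves every injectivity hypothesis — and discarding an initial part of the orbit, I would reduce to the case $W = D \Subset U_0$ in which $(U_n)$ already satisfies the conclusions of \cite{mcwd}: each $U_n$ is bounded and multiply connected, $f$ carries the bounded complementary components of $U_n$ into those of $U_{n+1}$, $d_n \defeq \deg(f\vert_{U_n}) \ge 2$, the moduli of the essential annuli of $U_n$ tend to infinity, and — crucially — in suitable logarithmic coordinates $f$ acts on the essential part of $U_n$ like multiplication by $d_n$ up to bounded distortion. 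Note that, being bounded and simply connected, $W_n$ has connected complement in $\C$; in particular it does not separate any bounded complementary component (``hole'') of $U_n$ from $\infty$.

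The heart of the argument — and the step I expect to require essentially all of the work — is to contradict this. The mechanism is angular expansion. By the Schwarz--Pick inequality applied to $f^n \colon U_0 \to U_n$, the set $\overline{W_n} = f^n(\overline{D})$ has hyperbolic diameter in $U_n$ bounded independently of $n$, so by \cite{mcwd} it lies in the region where $f$ behaves like $z \mapsto z^{d_n}$ up to bounded distortion; consequently the angular width (oscillation of $\arg$) of $W_n$ is multiplied by a factor comparable to $d_n \ge 2$ at each step, and hence grows at least like $\prod_{k<n} d_k \ge 2^{\,n} \to \infty$, starting from the positive initial width of $D$. On the other hand, precisely because $f\vert_{U_n}$ is a $d_n$-fold power-like map there, injectivity of $f\vert_{W_n}$ forces the angular width of $W_n$ to stay below a bound of order $1/d_n$: once $W_n$ is angularly wider than that, it contains two points of equal modulus whose arguments differ by $2\pi/d_n$ (up to bounded distortion), and $f$ identifies them. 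These two requirements are incompatible for $n$ large, which is the desired contradiction; making it rigorous comes down to extracting from \cite{mcwd} the precise logarithmic model for $f\vert_{U_n}$ together with explicit control on its distortion, and to checking that $W_n$ always meets the part of $U_n$ where that model is valid. Granting this, no admissible $W$ exists, and hence the grand orbit relation on $V$ is indiscrete.
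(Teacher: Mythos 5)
Your overall strategy is the same as the paper's: reduce via \thmAref\eqref{thmA:d} to showing that no simply connected $W$ in the grand orbit of $U \setminus \widehat{S}(f)$ can have $f|_{f^n(W)}$ injective for all $n$, and then use the power-map-like structure of $f$ inside multiply connected wandering domains from \cite{mcwd} to contradict that. Your preliminary reduction (that $\widehat{J}(f)$ and $\widehat{S}(f)$ agree on $\Orb(U)$, and shrinking $W$ to a round disk) is fine. The gap is in the ``heart of the argument,'' and it is a genuine one, not merely a matter of spelling out routine details.

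You want to argue step by step that $f\colon U_n \to U_{n+1}$ multiplies the angular width of $W_n=f^n(W)$ by a factor comparable to $d_n$, and that injectivity of $f|_{W_n}$ caps the angular width of $W_n$ at roughly $2\pi/d_n$. Two problems. First, the distortion results in \cite{mcwd} (in particular Theorem~5.2, which the paper uses) are phrased for the \emph{iterate} $f^m$ on a fixed annulus: $f^m(z)=q_m\bigl(\phi_m(z)\bigr)^{d_m}$ with $|\phi_m(z)|\to|z|$ and $d_m\to\infty$. They do not directly give a per-step power-map model for each $f|_{U_n}$ with distortion that stays uniformly bounded as you compose, so the claimed lower bound $\prod_{k<n}d_k$ on the angular growth is not justified as stated. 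Second, even if $W_n$ has angular extent exceeding $2\pi/d_n$, it does not automatically contain two points of \emph{equal modulus} whose arguments differ by $2\pi/d_n$; a thin, slanted simply connected set can have large angular extent while every circle $\{|z|=\rho\}$ meets it in a tiny arc. So the step from ``angular width exceeds $2\pi/d_n$'' to ``$f|_{W_n}$ identifies two points'' needs more than bounded turning of the boundary.

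The paper sidesteps both issues with one move: fix an arc $\alpha$ of \emph{constant modulus} inside the fixed disk $D=f^N(V)$, and for each large $m$ take $\gamma_m=\phi_m^{-1}(\alpha)\subset D$. Then $f^m(\gamma_m)=q_m\,\alpha^{\,d_m}$ is literally a circle traversed at least twice once $d_m$ is large enough (using $d_m\to\infty$, which is the actual input from \cite{mcwd}, rather than $d_m\ge 2$), so $f^m|_D$, and hence some $f|_{f^{N+j}(V)}$, is not injective. There is no per-step distortion bookkeeping and no issue of locating equal-modulus points: they are built into the choice of $\alpha$. If you reformulate your argument around the iterate estimate for $f^m$ and a constant-modulus arc rather than step-by-step angular growth of a general blob, you recover the paper's proof.
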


\begin{proof} Let $\widetilde{U}$ be a connected component of
	$U\setminus \widehat{S}$, and, for each $n\geq 1$, denote
	$U_{n+1}\defeq f(U_n)$, with $U_0=U$. Let $V\subset \widetilde{U}$
	be any simply connected domain. Then, by \cite[Theorem~1.3]{mcwd},
	there exists $N\in \N$ and a sequence of round annuli $(C_n)$ such
	that $f^n(V) \subset C_n\subset U_n$ for all $n\geq N$. Let
	$D\defeq f^N(V)$. It follows by \cite[Theorem 5.2]{mcwd} that for
	every $m\geq 0$ there exists a round annulus
	$A_m\supset C_N \supset D$ in $U_N$ such that for every $z \in A_m$
	and $m\in \N$, $f^m(z)= q_m\cdot (\phi_m(z))^{d_m},$ where $q_m>0$,
	$\phi_m$ is a conformal map on $A_m$ such that
	$\vert \phi_m(z)\vert \to \vert z\vert $ as $m \to \infty$ and
	$d_m \in \mathbb{N}$ is such that $d_m \to \infty$ as
	$m \to \infty$.
	
	Let us fix any arc, $\alpha$, of points of the same modulus
        contained in $D$. Then, for each $m$ sufficiently large, there
        exists a curve $\gamma_m\subset D$ such that
        $\phi_m(\gamma_m)=\alpha$. In addition, by increasing $m$ if
        necessary, since $d_m\to \infty$,
        $f^m(\gamma_m)=q_m (\phi_m(\gamma_m))^{d_m}$ is a circle
        traversed at least twice. This implies that
        $f\vert_{f^{N+j}(V)}$ is not injective for some
          $1\leq j\leq m$, and so by \thmAref the orbit relation in
        $\widetilde{U}$ is indiscrete.
\end{proof}

To conclude this section and our general discussion on discreteness of
grand orbit relations, we show in the following lemma how orbit
relations may vary (or be preserved) under conjugation via sequences
of homeomorphisms. To do so we place ourselves in a somewhat more
abstract setting and we consider two maps $f$ and $g$ (rational or
entire) which are topologically equivalent when restricted to some
domains $U\subset \widehat{F}(f)$ and $V\subset \widehat{F}(g)$. That
is, $g|_{f^n(V)} = h_{n+1} \circ f|_{g^n(V)} \circ h_n^{-1}$ for some
sequence $(h_n: f^n(U) \to g^n(V))$ of homeomorphisms. A particular
case is that where one of the two components, say $V$, is a periodic
Fatou component of $g$ and $f$ is its logarithmic lift, as discussed
in Observation \ref{obs:lifts_uh}. A slightly more general situation
will be particularly useful for the proof of \thmBref.

\begin{lemma}[Orbit relations under topological
  equivalence] \label{lem_new_discrete_preserved} Let $f$ and $g$ be
  entire or rational functions and suppose that
  $(f_n\defeq f\vert_{f^n(U)})$ and $(g_n\defeq g\vert_{g^n(V)})$ are
  topologically equivalent for some connected domains
  $U\subset \widehat{\C}\setminus \widehat{J}(f)$ and
  $V\subset \widehat{\C}\setminus \widehat{J}(g)$. That is, there
  exists a sequence $(h_n\colon f^n(U)\to g^n(V))$ of homeomorphisms
  such that
  \begin{equation}
    \label{eq:conj_top}
    g_n = h_{n+1} \circ f_n \circ h_n^{-1} \quad \text{ for all }
    n \ge 0.
  \end{equation}
  Then one of the following holds:
  \begin{enumerate}
  \item \label{item_no_rot} Neither $U$ nor $V$ belongs to a (preimage
    of a) rotation domain. Then the grand orbit relation in $\GO(U)$
    is discrete if and only if the grand orbit relation in $\GO(V)$ is
    discrete.
  \item \label{item_rotation} Both $U$ and $V$ belong to (preimages
    of) rotation domains. Then the grand orbit relations in $\GO(U)$
    and $\GO(V)$ are indiscrete.
  \item \label{item_one_of_each} Only one of $U$ or $V$, say $V$,
    belongs to a (preimage of a) rotation domain. Then the grand orbit
    relation is indiscrete in $\GO(V)$ and discrete in $\GO(U)$.
\end{enumerate}
\end{lemma}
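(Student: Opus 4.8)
The plan is to reduce the statement to the local discreteness criterion underlying \thmAref. Combining \Cref{cor:all_or_nothing}, \Cref{prop_strongly_discrete} and \Cref{cor:discrete}, one has: for a connected domain $\Omega\subset\widehat{\C}\setminus\widehat{J}(f)$ lying in a single Fatou component, the grand orbit relation in $\GO(\Omega)$ is discrete if and only if there are a point $z\in\Omega$ and a neighbourhood $N\subset\Omega$ of $z$ with $f^n(N)\cap f^m(N)=\emptyset$ for all $n\neq m$ and $f\vert_{f^n(N)}$ injective for all $n\geq 0$; call such an $N$ \emph{admissible}. By \Cref{cor:all_or_nothing}, whether this holds depends only on the Fatou component, so we may replace $\Omega$ by any subdomain of it. The crux is that the \emph{injectivity} half of this criterion is a topological invariant of the sequence of restrictions, whereas the \emph{disjointness} half is not, the discrepancy being produced exactly by rotation domains.

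First I would observe that injectivity transfers. Iterating \eqref{eq:conj_top} gives $h_n\circ f^n=g^n\circ h_0$ on $U$, so $h_n$ maps $f^n(W)$ homeomorphically onto $g^n(h_0(W))$ for every subdomain $W\subset U$ and conjugates $f\vert_{f^n(W)}$ to $g\vert_{g^n(h_0(W))}$; thus $f\vert_{f^n(W)}$ is injective for all $n$ if and only if $g\vert_{g^n(h_0(W))}$ is. Since $h_0\colon U\to V$ is a homeomorphism, this yields the equivalence $(\ast)$: there is a neighbourhood $W\subset U$ with $f\vert_{f^n(W)}$ injective for all $n$ if and only if there is a neighbourhood $W'\subset V$ with $g\vert_{g^n(W')}$ injective for all $n$. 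Next I would invoke the classification of the periodic and preperiodic cases: if the Fatou component containing $U$ is (a preimage of) a basin of attraction, a parabolic basin, or a Baker domain, then the grand orbit relation in $\GO(U)$ is discrete by \cite{FagHen06,FagHen09,McMS}; if it is (a preimage of) a rotation domain it is indiscrete by the same references; and if it is a wandering domain, then the sets $f^n(U)$ lie in pairwise distinct Fatou components, so the disjointness requirement is automatic for every $W\subset U$, whence $\GO(U)$ is discrete if and only if some $W\subset U$ has $f\vert_{f^n(W)}$ injective for all $n$. The same trichotomy applies to $V$ and $g$. Finally I would record a one-sided fact: if $V$ is contained in (a preimage of) a rotation domain, there is $k$ such that $g^k(V)$ lies in the rotation cycle; since $g$ restricts to a conformal isomorphism on each component of that cycle, $g\vert_{g^n(W')}$ is injective for all $n\geq k$ and any $W'\subset V$, while for the finitely many $n<k$ one shrinks $W'$ using continuity of $g^n$ and the local injectivity of $g$ on $\widehat{\C}\setminus\widehat{J}(g)$; hence some neighbourhood $W'\subset V$ has $g\vert_{g^n(W')}$ injective for all $n$.

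Now the three cases follow. In case \eqref{item_no_rot} neither $U$ nor $V$ lies in a rotation domain. If at least one of them, say $U$, lies in a basin or a Baker domain, then $\GO(U)$ is discrete, so by \Cref{cor:discrete} there is an admissible $N\subset U$, in particular with $f\vert_{f^n(N)}$ injective for all $n$; then $(\ast)$ provides such a neighbourhood in $V$, and since the component of $V$ is either a basin/Baker domain (discrete outright) or a wandering domain (where disjointness is automatic), $\GO(V)$ is discrete as well. If instead both $U$ and $V$ are wandering domains, then by the wandering-case criterion and $(\ast)$ the two relations are discrete or indiscrete simultaneously. In either subcase, $\GO(U)$ is discrete if and only if $\GO(V)$ is. In case \eqref{item_rotation} both relations are indiscrete by the classification. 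In case \eqref{item_one_of_each}, with $V$ in a rotation domain, $\GO(V)$ is indiscrete by the classification; and $\GO(U)$ is discrete, since either $U$ lies in a basin or a Baker domain (classification) or $U$ is a wandering domain, in which case $\GO(U)$ is discrete if and only if an injectivity neighbourhood exists in $U$, which by $(\ast)$ is equivalent to one existing in $V$, which holds by the one-sided fact above.

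I expect the main obstacle to be the rotation-domain bookkeeping: checking that $g$ restricts to a conformal isomorphism on each component of a rotation cycle (for cycles of Herman rings one uses that degrees multiply under composition and that $g^{p}$ acts as an irrational rotation), and extracting the implication ``$U$ in (a preimage of) a rotation domain $\Rightarrow$ $\GO(U)$ indiscrete'' cleanly from \cite{FagHen09,McMS}, since the naive ``recurrence to the base point'' argument fails on preimages of rotation domains and would have to be routed through the rotation cycle itself.
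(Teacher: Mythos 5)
Your high-level strategy — transferring the injectivity half of the discreteness criterion through the conjugacy and then invoking the periodic-component classification — is essentially the paper's argument, and the reduction to \thmAref via \Cref{cor:all_or_nothing}, \Cref{prop_strongly_discrete}, and \Cref{cor:discrete} is correct. However, there is a genuine error in the classification you invoke: you assert that ``if the Fatou component containing $U$ is (a preimage of) a basin of attraction, a parabolic basin, or a Baker domain, then the grand orbit relation in $\GO(U)$ is discrete.'' This is false for \emph{superattracting} basins. In a superattracting basin (local model $z\mapsto z^d$ with $d\ge 2$), the grand orbit relation on the complement of $\widehat{S}$ is indiscrete, and \cite[Section~6]{McMS} and \cite[Section~4]{FagHen09} classify exactly rotation domains \emph{and} superattracting basins as the periodic components with indiscrete relations. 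Your case \eqref{item_no_rot} accordingly breaks: if $U$ lies in a superattracting basin of $f$ and $V$ in a wandering domain of $g$, your argument concludes both $\GO(U)$ and $\GO(V)$ are discrete, whereas in fact both are indiscrete.

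The fix is available inside your own framework, and is what the paper actually does: observe that in a superattracting basin the injectivity of $f\vert_{f^n(W)}$ fails for all sufficiently large $n$ and for every open $W$ in the component (the images spiral around the critical orbit), so injectivity failure — not discreteness — is what transfers through $(\ast)$. Concretely, in the situation where $\GO(U)$ is assumed discrete, the resulting injectivity in $V$ rules out $V$ lying in a superattracting basin; and symmetrically, if $U$ lies in a superattracting basin then failure of injectivity transfers to $V$, and combined with the wandering/periodic criterion shows $\GO(V)$ is indiscrete as well. Once the superattracting case is separated out, your argument for cases \eqref{item_rotation} and \eqref{item_one_of_each} goes through; in case \eqref{item_one_of_each}, injectivity on the rotation-domain side already forbids the other side from being superattracting, so your conclusion there is correct, but the reasoning you wrote still passes through the false classification claim and should be corrected for the same reason.
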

\begin{proof} We may assume without loss of generality that both $U$
  and $V$ are in periodic (not preperiodic) components or in wandering
  domains.
	
  Suppose that neither $U$ nor $V$ belongs to a rotation domain. In
  order to show \eqref{item_no_rot}, by symmetry, it suffices to prove
  one direction. Suppose the grand orbit relation in $\GO(U)$ is
  discrete. Choose any point $z\in U$. Then, by \Cref{cor:discrete}
  and \thmAref, there exists a neighborhood $U_z$ of $z$ such that
  $f^n(U_z)\cap f^m(U_z)=\emptyset$ for all $n\neq m$ and so that
  $f\vert_{f^n(U_z)}$ is injective for all $n\geq 0$. Let
  $w\defeq h_0(z)$ and $V_w\defeq h_0(U_z)$. Note that
  $g^n(V_w)=h_{n}(f^n(h^{-1}_0(V_w)))$ for each $n$, and that
  $g^n\vert_{V_w}$ is injective as composition of injective
  maps. Then, by \Cref{cor:discrete} and \thmAref, the grand
  orbit relation in $\GO(V_w)$ is discrete if and only if there exists
  a neighborhood $V'_w\subset V_w$ of $w$ such that the sets
  $g^n(V'_w)$, $n\geq 0$, are pairwise disjoint. Note that if $V$ is
  in a wandering domain, then the sets $g^n(V_w)$, $n\geq 0$, are
  necessarily pairwise disjoint and we are done. Otherwise $V$ belongs
  to a periodic Fatou component of $g$. The only periodic components
  with indiscrete grand orbit relations are rotation domains and
  superattracting basins (see \cite[Section~6]{McMS} and
  \cite[Section~4]{FagHen09}). The first ones are discarded by
  hypothesis, while in a superattracting basin, the injectivity of
  $g^n|_W$ fails if $n$ is large enough, for every open set
  $W\subset V$, a contradiction.

  In order to show \eqref{item_rotation} and \eqref{item_one_of_each},
  we may assume without loss of generality that $V$ is in a rotation
  domain (and not a preimage of it). Then the orbit relation in
  $\GO(V)$ is indiscrete (see \cite[Section~6]{McMS} or
  \cite[Section~4]{FagHen09}) and $g_n$ is injective in $g^n(V)$ for
  all $n\geq 0$. If $U$ is also in a rotation domain, the same
  conclusion applies, and \eqref{item_rotation} follows. Otherwise,
  note that by \eqref{eq:conj_top}, for each $n\geq 1$, $f_n$ is
  injective as a composition of injective maps. Arguing as before, if
  $U$ is in a wandering domain, then the sets
  $f^n(U)=h_n^{-1}(g^n(h_0(U)))$, $n\geq 0$, belong to different Fatou
  components, and so are pairwise disjoint, hence the grand orbit
  relation is discrete. By injectivity of $f_n$, $U$ might also be
  part of an attracting or parabolic basin, or a Baker domain, but not
  part of a superattracting basin. Hence we conclude that the grand
  orbit relation in $\GO(U)$ must be discrete.
\end{proof}

\section{Coexistence of different orbit relations in wandering
  domains}
\label{sec:coexistence}

The goal of this section is to prove \thmBref, i.e., to find an
entire function $f$ with a wandering domain $U$ such that there are
connected components of $U\setminus \widehat{S}(f)$ both with discrete
and indiscrete grand orbit relations. We will do so by proving that
for any polynomial $P$, we can construct an entire function $f$ with
an orbit of wandering domains $(U_n)$, so that $f\vert_{U_n}$
resembles $P|_{D_n}$ for all $n$, where $(D_n)$ is an increasing
sequence of topological discs.  Finally, we will use Lemma
\ref{lem_new_discrete_preserved} to transfer the grand orbit relation
properties of $P$ to those of $f$ in the wandering domains.

Given a polynomial $P$ of degree $d\geq 2$, by Böttcher's
  theorem (see, e.g., \cite[\S 9]{milnor_book}) there exists a
  conformal map $\phi$ in a neighborhood of $\infty$ with
  $\phi(\infty) = \infty$, conjugating $P$ to $w \mapsto w^d$, i.e.,
  satisfying the functional equation $\phi(w^d) = P(\phi(w))$. Fix
  $R>1$ such that the domain of $\phi$ contains
  $\chat \setminus \D_R$ and, for each $n \in \N$, define
\begin{equation} \label{eq_poly_R}
  R_n\defeq R^{d^n}, \quad \quad
  \gamma_n \defeq \phi(\s_{R_n}) \quad \text{ and } \quad
  D_n\defeq{\rm int}(\gamma_n),
\end{equation}
where int$(\gamma_n)$ denotes the bounded connected component of
$\C\setminus \gamma_n$. Note that for all $n \ge 0$ we have that
$\overline{D_n} \subset D_{n+1}$, all critical points of $P$ are
contained in $D_n$, and the restriction $P|_{D_n}: D_n \to D_{n+1}$ is
a proper map of degree $d$.
 Note that we can view $(P|_{D_n})$ as a sequence of proper holomorphic
  maps in the sense of \Cref{def:sequences}.

\begin{figure}[t]
  \centering
  \def\svgwidth{0.8\linewidth}
  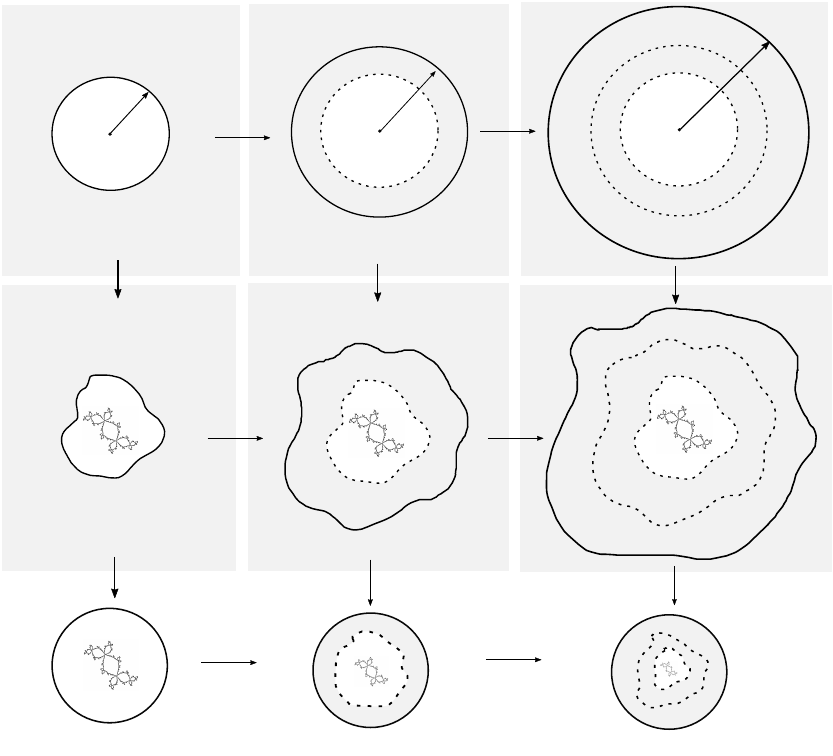
  \caption{Illustration of the construction of the sets $(D_n)$
      and the associated sequence of Blaschke products $(b_n)$ for the
      polynomial $P(z)\defeq z^2+c$, with $c=-0.123+0.745i$. (Douady's
    rabbit)}
  \label{fig:poly_to_BP}
\end{figure}

\begin{prop}[The prescribed Blaschke sequence]\label{prop_McMullen}
  Let $P$ be a polynomial of degree $d\geq 2$ and let $(D_n)$ be
  defined as in (\ref{eq_poly_R}). Then, there exists a sequence
  $(b_n)$ of uniformly hyperbolic Blaschke products of degree $d$
  conformally equivalent to the sequence $(P\vert_{D_n})$.
\end{prop}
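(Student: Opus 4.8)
The plan is to define the $b_n$ as the conformal conjugates of $P|_{D_n}$ by suitably normalised Riemann maps of $D_n$, and then to deduce uniform hyperbolicity from \Cref{thm:uniform-hyp-characterization}.

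First I would fix a marked orbit that stays in a bounded region. Since $P(z)-z$ is a polynomial of degree $d\geq 2$, the map $P$ has a fixed point; pick one and call it $w_0$. As a periodic point, $w_0$ lies in the filled Julia set $K(P)$, and every point of $\chat\setminus D_0$ escapes to $\infty$ under $P$ (it is a Böttcher image of a point with modulus $\geq R>1$), so $K(P)\subset D_0\subset D_n$ for all $n$; hence $w_n\defeq P^n(w_0)=w_0\in D_0$ for all $n$. Each $D_n=\mathrm{int}(\gamma_n)$ is a Jordan domain, so let $\psi_n\colon D_n\to\D$ be the unique conformal isomorphism with $\psi_n(w_0)=0$ and $\psi_n'(w_0)>0$, and set
\[
  b_n\defeq \psi_{n+1}\circ (P|_{D_n})\circ\psi_n^{-1}\colon \D\to\D .
\]
Since $P|_{D_n}\colon D_n\to D_{n+1}$ is proper of degree $d$ and $\psi_n,\psi_{n+1}$ are conformal, $b_n$ is a proper holomorphic self-map of $\D$ of degree $d$, hence a finite Blaschke product of degree $d$; moreover $b_n(0)=\psi_{n+1}(P(w_0))=\psi_{n+1}(w_0)=0$, so $b_n$ has the normal form \eqref{eq_BP_fix0} and in particular is hyperbolic. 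By construction the sequence $(\psi_n)$ realises a conformal equivalence between $(P|_{D_n})$ and $(b_n)$ in the sense of \Cref{def:top_equiv}.

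It remains to verify uniform hyperbolicity, which I would do via \Cref{thm:uniform-hyp-characterization}\eqref{cond:crit-pts-3}. The degrees are constantly $d$, and $(0)=(\psi_n(w_0))$ is a $(b_n)$-orbit. The critical points of $P|_{D_n}$ inside $D_n$ are precisely the $d-1$ critical points $\Crit(P)=\{c_1,\dots,c_{d-1}\}$ of $P$, all of which lie in $D_0$, so $\Crit(b_n)=\psi_n(\Crit(P))$, and by conformal invariance of the hyperbolic metric
\[
  d_{\D}\!\big(0,\psi_n(c_j)\big)=d_{D_n}(w_0,c_j), \qquad j=1,\dots,d-1 .
\]
The point is that this is bounded uniformly in $n$ because the $D_n$ exhaust the plane: the equipotentials $\gamma_n=\phi(\s_{R_n})$ escape to $\infty$ as $R_n=R^{d^n}\to\infty$, so $\bigcup_n D_n=\C$ and $\mathrm{dist}(\overline{D_0},\C\setminus D_n)\to\infty$. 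Applying the monotonicity estimate $\rho_{D_n}(z)\leq 2/\mathrm{dist}(z,\partial D_n)$ on the compact set $\overline{D_0}$ shows that $\rho_{D_n}\to 0$ uniformly on $\overline{D_0}$; integrating along a fixed path in $D_0$ from $w_0$ to $c_j$ gives $d_{D_n}(w_0,c_j)\to 0$. Hence there is a constant $C$ with $d_{D_n}(w_0,c_j)\leq C$ for all $n$ and all $j$, and \Cref{thm:uniform-hyp-characterization} then gives that $(b_n)$ is uniformly hyperbolic.

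The only genuinely non-formal ingredient is this last observation: the critical set of $P|_{D_n}$ is frozen at $\Crit(P)$ while the conformal geometry of $D_n$ degenerates (the hyperbolic metric collapses on compacta as $D_n\uparrow\C$), and choosing the marked orbit to be a fixed point of $P$ keeps it inside the compact set where this collapse takes place; this forces the marked orbit and the critical points to stay at bounded hyperbolic distance. Everything else — properness and the degree count, the normalisation making $b_n$ fix $0$, and the conformal equivalence of the two sequences — is routine.
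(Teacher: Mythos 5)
Your proof is correct and follows essentially the same route as the paper: define $b_n$ as conjugates of $P|_{D_n}$ by Riemann maps normalised at a fixed point of $P$, then invoke \Cref{thm:uniform-hyp-characterization}\eqref{cond:crit-pts-3} by bounding $d_{D_n}(w_0,c_j)$ uniformly in $n$. The only variation is in that final step: you show $d_{D_n}(w_0,c_j)\to 0$ via the exhaustion $\bigcup_n D_n=\C$ and the collapse of the hyperbolic density on compacta, whereas the paper observes more simply that $d_{D_n}(w_0,c_j)$ is monotone decreasing in $n$ by the Schwarz--Pick lemma applied to the inclusion $\overline{D_n}\subset D_{n+1}$, and is therefore bounded by its value at $n=0$.
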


  \begin{remark}
    See \Cref{fig:poly_to_BP} for an illustration of the construction
    of the sets $D_n$ and the associated Blaschke sequence $(b_n)$.
  \end{remark}
	
\begin{proof}
  By conjugating with an affine transformation, we may
  assume that $P(0)=0$. For each $n\geq 0$,
  choose a conformal map $\psi_n\colon D_n \to \D$, normalized
  so that $\psi_n(0)=0,$ and define the degree $d$ Blaschke product
  \begin{equation}\label{eq_defn_bn}
    b_n(z) \defeq \psi_{n+1} \circ  P  \circ \psi_n^{-1} (z), \quad
    \text{ for } z\in \D.
  \end{equation}

  We claim that the sequence $(b_n)$ is uniformly hyperbolic. Indeed,
  by \Cref{thm:uniform-hyp-characterization}\eqref{cond:crit-pts-3},
  it suffices to show that there exists a constant $C$ such that
  $d_\D(0,c) \le C$ for all $c\in \Crit(b_n)$ and all $n$. Recall that
  $\Crit(P)\subset D_0$ and that $\psi_n(\Crit(P))=\Crit(b_n) $ for
  all $n$. Since $\overline{D}_n\subset D_{n+1}$, we have
  $\rho_{D_n}(z)>\rho_{D_{n+1}}(z)$ for all $z\in D_n$ the by
  Schwarz-Pick lemma (see \cite[Theorem~10.5]{beardon_minda}), while
  the maps $\psi_n$ preserve hyperbolic distances. In particular, for
  any $c\in \Crit(P)$,
  \[
    d_\D(0,\psi_n(c))=d_{D_n}(0,
    c)\geq d_{D_{n+1}}(0,c)=d_\D(0,\psi_{n+1}(c)).
  \]
  The claim now follows with $C\defeq \max_{z\in \Crit(b_0)}d_\D(0,z)$.
\end{proof}

We aim to use \thmCref to insert the sequence of Blaschke products
$(b_n)$ obtained in Proposition \ref{prop_McMullen} inside an orbit of
wandering domains $(U_n)$ of a function $f$ (the {\em template}
function) such that $\deg(f\vert_{U_n})=d$.  Since in this case the
sequence $\deg(f\vert_{U_n})$ is constant, $f$ may be chosen to be a
\text{lift} of a self-map of $\C\setminus \{0\}$, as explained in
Observation~\ref{obs:lifts_uh}. Using this procedure and unlike in
Lemma \ref{lem:f_with_dn}, where $f$ is obtained via approximation
techniques, the singular set $S(f)$ of $f$ is known. The following is
an appropriate choice of $f$, a reformulation of
\cite[Proposition~4.1]{associates_20}, in a version that fits our
purposes.

\begin{prop}[The template function]\label{prop_associates} Let
  $d \ge 2$. Then there exists an entire function $g$ with no finite
  asymptotic values and an orbit of simply connected wandering domains
  $\widehat{U} = (U_n)_{n\geq 0}$ such that
  \begin{enumerate}[\rm (a)]
  \item \label{assoc:a} $(g|_{U_n})$ is uniformly hyperbolic, with
    $\deg g|_{U_n}=d$ for all $n\geq 0$;
  \item \label{assoc:b} each $U_n$ contains exactly one critical
    point $c_n$ of multiplicity $d-1$, and $g(c_n) = c_{n+1}$;
  \item \label{assoc:c} there are no critical points in
    $\GO(\widehat{U}) \setminus \widehat{U}$;
  \end{enumerate}
\end{prop}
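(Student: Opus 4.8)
The plan is to obtain $g$ directly from \cite[Proposition~4.1]{associates_20} and then to verify the additional features (a)--(c) together with the absence of finite asymptotic values; all of these are either explicit in that construction or a minor extraction from it. Recall that the function produced there is the \emph{logarithmic lift} (or \emph{associate}) of a carefully chosen self-map $F$ of $\C\setminus\{0\}$, so that $\exp\circ g=F\circ\exp$. The map $F$ is engineered to possess a simply connected invariant \emph{superattracting} basin $\Delta$, with the induced permutation of the components of $\exp^{-1}(\Delta)$ having no periodic cycle; these components then form an orbit $\widehat{U}=(U_n)$ of simply connected wandering domains of $g$. The properties of $F$ I would use are: (i) the superattracting fixed point $q\in\Delta$ is the unique critical point of $F$ in $\Delta$, of local degree $d$; (ii) every critical point of $F$ lies either in $\Delta$ or outside the full basin of $\Delta$; and (iii) $g$ has no finite asymptotic values, which holds because in the explicit form of \cite[Proposition~4.1]{associates_20} the map $g$ is a finite exponential sum $g(z)=P(e^z)+2\pi i k$ for a Laurent polynomial $P$, hence lies in the Speiser class with $\infty$ as its only asymptotic value.

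Granting this input, I would argue as follows. Since $\Delta$ is simply connected and $\exp\colon\exp^{-1}(\Delta)\to\Delta$ is an unbranched covering, the restriction $\exp|_{U_n}\colon U_n\to\Delta$ is a conformal isomorphism for every $n$; conjugating $g|_{U_n}$ by these isomorphisms shows $g|_{U_n}$ is a proper map of degree $\deg(F|_\Delta)$, which equals $d$ by the Riemann--Hurwitz formula applied to $F|_\Delta\colon\Delta\to\Delta$ and its unique critical point $q$ of multiplicity $d-1$. Uniform hyperbolicity of $(g|_{U_n})$ then follows from Observation~\ref{obs:lifts_uh}, as $\Delta$ is a superattracting basin; this establishes (a). For (b), observe that $\exp$ being locally biholomorphic, the critical points of $g$ are exactly the $\exp$-preimages of the critical points of $F$, with the same multiplicities. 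As $\exp(U_n)=\Delta$ and $\Crit(F)\cap\Delta=\{q\}$, each $U_n$ contains a single critical point $c_n$ of $g$, of multiplicity $d-1$ and satisfying $\exp(c_n)=q$. Since $F(q)=q$ we have $\exp(g(c_n))=q=\exp(c_{n+1})$, and as $g(c_n)$ and $c_{n+1}$ both lie in $U_{n+1}$, where $\exp$ is injective, we conclude $g(c_n)=c_{n+1}$. For (c), take $z\in\GO(\widehat{U})\setminus\widehat{U}$, lying in a Fatou component $\Omega$ with $g^a(\Omega)\subseteq U_m$ for some $a\ge 1$ and $m\ge 0$ but with $\Omega\neq U_k$ for all $k$. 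Since $\exp$ maps Fatou components of $g$ into Fatou components of $F$, $\exp(\Omega)$ lies in the full basin of $\Delta$; and $\exp(\Omega)\neq\Delta$, for otherwise $\Omega$ would be a component of $\exp^{-1}(\Delta)$, i.e.\ one of the $U_k$. Hence $\exp(\Omega)$ lies in the non-immediate part of the basin of $\Delta$, where by (ii) there are no critical points of $F$; therefore $g$ has no critical points in $\Omega$, which gives (c).

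The main obstacle lies not in this verification but in the input \cite[Proposition~4.1]{associates_20} itself: one has to exhibit a self-map $F$ of $\C\setminus\{0\}$ with an essential singularity at $0$ and/or $\infty$ --- so that its lift $g$ is transcendental --- which simultaneously enjoys (i), (ii) and (iii) and induces a fixed-point-free permutation of $\exp^{-1}(\Delta)$. In \cite{associates_20} this is achieved by taking $F=e^{Q}$ for a suitable Laurent polynomial $Q$, choosing $Q'$ to have a zero of order $d-1$ at the fixed point and pushing the remaining critical points of $F$ out of the basin; the estimates involved are elementary, and I would simply cite \cite{associates_20} rather than reproduce them. A secondary point worth double-checking is that $F|_\Delta$ is genuinely proper, so that Riemann--Hurwitz applies and $g|_{U_n}$ has degree exactly $d$: this follows because the explicit $F$ has no finite nonzero asymptotic values, whence $\Delta$, being well inside $\C\setminus\{0\}$, contains none.
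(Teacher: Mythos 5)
Your overall strategy matches the paper's: take the logarithmic lift $g$ of the map $G$ from \cite[Proposition~4.1]{associates_20}, which has a simply connected invariant superattracting basin $W$ with a unique critical point of multiplicity $d-1$, and read off properties (a)--(c) from the lifting structure and \Cref{obs:lifts_uh}. Your arguments for (a) and (b) are sound. But there is a genuine gap stemming from a misremembered structural feature, and it matters for (c).

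You assert that the induced permutation on the components of $\exp^{-1}(\Delta)$ has no periodic cycle and that ``these components then form an orbit $\widehat{U}$.'' That is not what happens. In the actual construction the lift satisfies $g(w+2\pi i)=g(w)+4\pi i$, so if $W_k$ denotes the component of $\exp^{-1}(W)$ containing $z_k=(2k-1)\pi i$, the induced map on indices is $k\mapsto 2k$. This has a fixed point at $k=0$ (giving a fixed superattracting basin $W_0$ of $g$, not a wandering domain), is not surjective, and splits $\Z\setminus\{0\}$ into infinitely many disjoint wandering orbits $\widehat W_m=(W_{2^n m})_{n\ge0}$ for $m$ odd; the paper selects $\widehat U=\widehat W_1$. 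In particular, $\widehat U$ is a proper subset of the components of $\exp^{-1}(\Delta)$. This breaks the step in your proof of (c) where you claim that $\exp(\Omega)\ne\Delta$ ``for otherwise $\Omega$ would be a component of $\exp^{-1}(\Delta)$, i.e.\ one of the $U_k$'': a component of $\exp^{-1}(\Delta)$ need not be one of the $U_k$. The way to close the gap is to observe directly that every critical point of $g$ is an $\exp$-preimage of $-1\in\Delta$, hence lies in some $W_j$, and then to check that $W_j\in\GO(\widehat U)$ forces $2^a j=2^n$ for some $a,n\ge 0$, which forces $j$ to be a power of two, i.e.\ $W_j\in\widehat U$. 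So critical points never appear in $\GO(\widehat U)\setminus\widehat U$.

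A smaller inaccuracy: you describe $g$ as a finite exponential sum $g(z)=P(e^z)+2\pi i k$, but the actual lift is $g(w)=2w+q(e^w)-c+\pi i$, which has a genuine linear term and is not of that form. The ``no finite asymptotic values'' conclusion is still correct, but the justification should go through the structure of $G$: the only finite asymptotic value of $G$ is $0$, and $0$ is an omitted value of $\exp$, so its preimage under $\exp$ is empty and the lift $g$ inherits no finite asymptotic value.
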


\begin{proof}
  Following \cite[proof of Proposition 4.1]{associates_20}, consider
  the functions
  \begin{align*}
    q(z)&\defeq 2\cdot \sum_{j=1}^{d-1} {d-1 \choose j} \cdot
          \frac{z^j}{j}, \quad \text{and}\\ 
    G(z)&\defeq -z^2\cdot \exp (q(z)-c),
  \end{align*}
  for $c\defeq q(-1)$. The function $G$ has two superattracting fixed
  points at $-1$ and $0$, no further critical points, and a single
  finite asymptotic value at $0$. Moreover, the Fatou
    component $W$ containing $-1$ is simply connected, and the
    restriction $G\colon W\to W$ is conjugate to $z \mapsto z^d$ on
    the unit disk.
  Since $G^{-1}(0)=\{0\}$, we can lift $G$ under the
  exponential map to obtain a map $g$, defined by
  \begin{equation*}
    g(w)\defeq 2w+q(\exp(w))-c+\pi i.
  \end{equation*}
  Since the only finite asymptotic value of $G$ is the fixed
    point at zero, which is an omitted value for the
        exponential,
    the lift $g$ does not have any finite
    asymptotic values. For each $k \in \Z$, let $W_k$ be the connected component of
    $\exp^{-1}(W)$ containing $z_k = (2k-1)\pi i$. Each $W_k$ is a
    Fatou component of $g$ (see, e.g., \cite{Bergweiler_1995}), and
    since $W$ is simply connected with $0 \notin W$, each $W_k$ is
    simply connected, and is mapped conformally onto $W$ by the
    exponential function. In particular, this implies that the $W_k$
    are all distinct, i.e., $W_k \ne W_l$ for $k \ne l$. Furthermore,
    since $g(z_k) = z_{2k}$, we also know that $g(W_k) = W_{2k}$, and
    that $\deg g|_{W_{2k}} = d$, with exactly one critical point $z_k$
    of multiplicity $d-1$, mapping to the critical point
    $z_{2k}$. This shows that $W_0$ is a fixed superattracting basin
    for $g$, whereas for every odd $m \in \Z$, the sequence
    $\widehat{W}_m = (W_{2^n m})_{n\geq0}$ is an orbit of wandering
    domains, with $\widehat{W}_m \cap \widehat{W}_l = \emptyset$ for
    $m \ne l$.  Since all the $W_k$ are simply connected, and
    $\deg g|_{W_k} = d \ge 2$, all of these orbits of wandering
    domains are admissible. Furthermore, since the critical points in
    each $\widehat{W}_m$ form a single orbit, $g$ is uniformly
    hyperbolic on each of them. Finally, choosing
    $\widehat{U} = \widehat{W}_1$, i.e., $U_n = W_{2^n}$ for
    $n \ge 0$, assertion \eqref{assoc:c} follows from the fact that
    all the critical point of $g$ are contained in the disjoint union
    $\bigcup_{m \text{ odd}} \widehat{W}_m$ of orbits of wandering
    domains.
\end{proof}

In the following theorem, we again adopt the notation $\D^{(n)}$ to
denote different copies of $\D$.
\begin{thm}[Inserting the Blaschke sequence]\label{thm:poly} Let $P$
  be a polynomial of degree $d\geq 2$, and let $(D_n)$ be defined as
  in \eqref{eq_poly_R}.  Then there exists an entire function $f$ with
  an orbit of simply connected wandering domains $(U_n)$ such that
  $(f\vert_{U_n})$ and $(P\vert_{D_n})$ are
  conformally equivalent under a sequence of conformal maps $h_n:U_n
  \to D_n$. Furthermore, there is a map $V \mapsto V^*$ from
  components of  $\widehat{F}(P) \cap D_0$ to components of 
  $\widehat{F}(f) \cap U_0$ such that
  \begin{enumerate}[\rm (a)]
    \item \label{poly:b}
      $V \subset h_0(V^*)$ for every component $V$ of $\widehat{F}(P) \cap D_0$,
    \item \label{poly:c}
      for every component $W$ of $\widehat{F}(f) \cap U_0$ there
      exists a component $V$ of $\widehat{F}(P) \cap D_0$ with $V^* =
      W$, and
    \item \label{poly:d}
      if $V_1$ and $V_2$ are different components of $\widehat{F}(P)
      \cap D_0$ with $V_1^* = V_2^*$, then $V_1$ and $V_2$ are
      contained in a superattracting basin of $P$.
  \end{enumerate}
\end{thm}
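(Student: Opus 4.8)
The plan is to produce $f$ by two successive applications of earlier results and then read off the component structure from the resulting conformal conjugacy.

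\emph{Step 1: construction and conformal equivalence.} After conjugating $P$ by an affine map so that $P(0)=0$, I apply \Cref{prop_McMullen} to get a uniformly hyperbolic sequence $(b_n)$ of Blaschke products of degree $d$ and conformal maps $\psi_n\colon D_n\to\D$ with $\psi_n(0)=0$ and $b_n=\psi_{n+1}\circ P\circ\psi_n^{-1}$. Then I apply \Cref{prop_associates} with the same $d$ to obtain a template entire function $g$ with no finite asymptotic values, an admissible orbit of wandering domains $\widehat U=(\widehat U_n)$ on which $(g\vert_{\widehat U_n})$ is uniformly hyperbolic with $\deg g\vert_{\widehat U_n}=d$, exactly one critical point per $\widehat U_n$, and no critical points in $\GO(\widehat U)\setminus\widehat U$. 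Since $\deg(g\vert_{\widehat U_n})=\deg(b_n)=d\ge 2$ and both sequences are uniformly hyperbolic, \thmCref applies to $g$ and $(b_n)$ and yields an entire $f$ with wandering domains $(U_n)$, conformal maps $\theta_n\colon U_n\to\D$ satisfying $f\vert_{U_n}=\theta_{n+1}^{-1}\circ b_n\circ\theta_n$, and a quasiconformal $\Phi\colon\C\to\C$ with $f=\Phi\circ g\circ\Phi^{-1}$ on neighbourhoods of $\C\setminus\bigcup_n U_n$ and $\C\setminus\bigcup_n\widehat U_n$ and $\Phi(\partial\widehat U_n)=\partial U_n$. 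Put $h_n\defeq\psi_n^{-1}\circ\theta_n\colon U_n\to D_n$; combining the two functional equations gives $h_{n+1}\circ f\vert_{U_n}=P\vert_{D_n}\circ h_n$, so $(f\vert_{U_n})$ and $(P\vert_{D_n})$ are conformally equivalent, which is the first assertion.

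\emph{Step 2: the marked Julia set inside $U_0$, and the map $V\mapsto V^*$.} Because $g$ has no finite asymptotic values and $f$ coincides with a conjugate of $g$ off the wandering orbit, $f$ has none either, so $S(f)=\overline{\CV(f)}$ and $\widehat S(f)=\overline{\GO_f(\CV(f))}$. The critical points of $f$ in $U_n$ are $h_n^{-1}(\Crit P)$; hence the critical values of $f$ whose forward orbit meets $\bigcup_m U_m$ are exactly the points $h_{j+1}^{-1}(v)$ with $v\in\CV(P)$, $j\ge 0$, while every other critical value of $f$ lies in the superattracting basin of $f$ or in one of the other wandering orbits supplied by \Cref{prop_associates} (using part (c) of that proposition) and has forward orbit disjoint from $\bigcup_m U_m$. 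As $U_0$ is a wandering domain it contains no preperiodic point and $\overline{\GO(\Per(f))}$ meets the interior of no Fatou component, so $\widehat J(f)\cap U_0=\widehat S(f)\cap U_0=\overline{\GO_f(\CV(f))}\cap U_0$. Next, if $z\in U_0$ and $f^{n}(z)=f^{m}(v_f)$ with $v_f\in\CV(f)$, then $f^{m}(v_f)\in U_n$ forces $v_f=h_{j+1}^{-1}(v)$ to lie in the wandering orbit, and the conjugacy gives $P^{n}(h_0(z))=P^{m'}(v)$ for a suitable $m'$, so $h_0(z)\in\GO_P(\CV(P))$; taking closures yields $h_0(\widehat J(f)\cap U_0)\subset\widehat J(P)\cap D_0$. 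Consequently, for each component $V$ of $\widehat F(P)\cap D_0$ the connected set $h_0^{-1}(V)$ lies in $\widehat F(f)\cap U_0$, and I define $V^*$ to be the component of $\widehat F(f)\cap U_0$ containing it; then $V\subset h_0(V^*)$, which is (a). For (b): $\widehat J(P)$ has empty interior (in each bounded Fatou component of $P$ the removed set is countable or a countable union of analytic circles, and $J(P)$ has empty interior), so $\widehat F(P)$ is dense, $h_0(W)$ meets $\widehat F(P)\cap D_0$ for every component $W$ of $\widehat F(f)\cap U_0$, and a point of that intersection shows $W=V^*$ for the component $V$ containing it.

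\emph{Step 3: (c).} I will show that if a component $V$ of $\widehat F(P)\cap D_0$ lies in a Fatou component $\Omega$ of $P$ that is not a superattracting basin — i.e., $\Omega$ is a geometrically attracting basin, a parabolic basin, or a Siegel disk — then $h_0^{-1}(V)$ is a full component of $\widehat F(f)\cap U_0$; granting this, $V^*$ is injective on such $V$, so distinct $V_1,V_2$ with $V_1^*=V_2^*$ must both sit inside superattracting basins of $P$, which is (c). (If $P$ is affinely conjugate to $z\mapsto z^{d}$, every Fatou component of $P$ is a superattracting basin and (c) is vacuous, so assume otherwise; then no finite critical value of $P$ is exceptional.) Two facts drive the argument. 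First, the backward orbit of each finite critical value of $P$ is dense in $J(P)$ and consists, inside $D_0\supset K(P)$, of points that $h_0$ sends into $\GO_f(\CV(f))\cap U_0$ — a preimage $w$ with $P^m(w)=v\in\CV(P)$ satisfies $f^{m}(h_0^{-1}(w))=h_m^{-1}(v)\in\CV(f)$ — so $h_0^{-1}(J(P))\subset\overline{\GO_f(\CV(f))}\subset\widehat J(f)$; hence components of $\widehat F(P)\cap D_0$ lying in different Fatou components of $P$ are separated, after applying $h_0^{-1}$, by a subset of $\widehat J(f)$ and never merge. Second, $\Omega\subset K(P)\subset D_0$ and $\widehat J(P)\cap\Omega$ does not disconnect $\Omega$: for attracting or parabolic $\Omega$ it is a countable subset of $\Omega$, so $\widehat F(P)\cap\Omega$ is one component; for a Siegel disk it is the centre together with finitely many invariant analytic circles, each equal to the closure of a grand orbit of a critical value and hence (again by density of preimages) with $h_0^{-1}$-image in $\widehat J(f)$, and $\widehat F(P)\cap\Omega$ is the union of the open annuli they bound. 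In every case $h_0^{-1}(V)$ fails to be a full component of $\widehat F(f)\cap U_0$ only through punctures coming from the forward post-critical orbits of $P$, and filling such a countable, relatively discrete set of punctures does not connect $h_0^{-1}(V)$ to $h_0^{-1}$ of any other component of $\widehat F(P)\cap D_0$; thus $h_0^{-1}(V)=V^*$.

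\emph{Where the difficulty lies.} The only non-routine part is (c): one must pin down exactly which points of $\widehat J(P)\cap D_0$ become invisible (land in $\widehat F(f)$) after the surgery. The mechanism is that the \emph{forward} post-critical orbits of $P$ get erased while the \emph{backward} (preimage) orbits survive and, being dense in $J(P)$, keep $h_0^{-1}(J(P))$ inside $\widehat J(f)$; after that, bookkeeping the components of $\widehat F(P)\cap\Omega$ for each periodic Fatou component $\Omega$ reduces to the McMullen--Sullivan classification of grand orbit relations (discrete for attracting and parabolic basins, so each such $\Omega$ contributes one indivisible component; indiscrete precisely for superattracting basins, where merging is permitted). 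Everything else — the construction, the conformal equivalence, and (a)–(b) — follows directly from \Cref{prop_McMullen}, \Cref{prop_associates}, and \thmCref.
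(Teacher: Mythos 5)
Your construction of $f$ (via \Cref{prop_McMullen}, \Cref{prop_associates}, and \thmCref), the observation that $f$ has no finite asymptotic values and that all critical points in $\GO(\widehat U)$ lie in $\widehat U$, the resulting identity $h_0(\widehat J(f)\cap U_0)\subset\widehat J(P)\cap D_0$ giving (a), and the density argument giving (b), all match the paper's proof (indeed, your density argument for (b) is a small simplification over the paper's case-by-case check).

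There is, however, a gap in your treatment of (c) in the Siegel disk case. You assert that each invariant analytic circle in a Siegel disk $G$ has its $h_0^{-1}$-image inside $\widehat J(f)$ ``by density of preimages'', referring back to the fact that the backward orbit of a critical value is dense in $J(P)$. This does not transfer to the Siegel disk. The set that survives the surgery is $h_0(\widehat S(f)\cap U_0)=\overline{A}$ with $A=\{w:P^{m+k}(w)=P^k(c)\text{ for some }m,k\ge 0,\ c\in\Crit(P)\}$, i.e.\ only preimages of post-critical points of depth at least the iteration index; the forward post-critical orbit $\{P^j(c)\}_{j\ge k_0}$, which is exactly what accumulates on the circle, is not in $A$ (since critical orbits in Siegel disks are not preperiodic). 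Moreover, if the first entry of the critical orbit into $G$ is $P^{k_0}(c)$ with $k_0>1$, then $v=P(c)\notin G$ and no backward iterate of $v$ lies in $G$ at all, so ``backward orbit of a critical value, restricted to $G$'' may be empty. One must therefore separately establish that $\overline{A}\cap G=\widehat S(P)\cap G$. The paper does this via a recurrence argument: choosing $k_j\to\infty$ with $P^{k_j}\to\operatorname{id}$ near the target point in $G$, one produces points $z_j\in A$ with $z_j\to z$. (Alternatively, one can compute directly in linearizing coordinates that $A\cap G$ projects to the dense set of angles $\{-j\alpha : j\ge k_0\}$ on the invariant circle.) Without such an argument, it is not established that distinct annular components of $\widehat F(P)\cap G$ map to distinct components of $\widehat F(f)\cap U_0$, which is exactly what (c) requires in this case. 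Everything else in your proposal is correct and follows the paper's line.
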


\begin{remark}
  Item \eqref{poly:b} is equivalent to the simple assertion that
  $\widehat{F}(P) \cap D_0 \subset h_0(\widehat{F}(f) \cap U_0)$, and
  assertions \eqref{poly:c} and \eqref{poly:d} say in a somewhat more
  precise way that the map $V \mapsto V^*$ from connected components
  of $\widehat{F}(P) \cap D_0$ to $\widehat{F}(f) \cap U_0$ induced by
  inclusion via $h_0^{-1}$ is onto, and that it is a one-to-one
  correspondence away from superattracting basins of $P$.
\end{remark}

  \begin{proof}
  By \Cref{prop_McMullen}, there exists a sequence of Blaschke
  products $(b_n)_{n\geq 0}$ conformally equivalent to
  $(P|_{D_n})_{n\geq 0}$, under a sequence of conformal maps
    $\psi_n:\D^{(n)} \to D_n$. Let $g$ be the entire function from
  \Cref{prop_associates} and $\widehat{V} = (V_n)_{n\geq 0}$ its
  orbit of wandering domains. By \thmCref (switching the
  roles of $f$ and $g$), we can replace the dynamics of $(g|_{V_n})$
  by the Blaschke sequence $(b_n)$, obtaining an entire map $f$ 
  with an orbit of wandering domains $\widehat{U} = (U_n)$
      such that $(f|_{U_n})$ is conformally equivalent to $(b_n)$
  under a sequence of conformal maps $(\Theta_n)$ and hence also
    to $(P|_{D_n})$ under $(h_n\defeq\Theta_n\circ \psi_n)$. For an illustration of the construction,
      see the commutative diagram in \Cref{fig:comm-diag}.

    \begin{figure}[t]
      \centering
      \begin{tikzcd}
        U_0 \arrow[dd,bend right ,swap,"h_0"] \arrow[r,"f"]
        \arrow[d,"\Theta_0"] & \ldots \arrow[r,"f"] &
        U_n \arrow[r, "f"]  \arrow[d,"\Theta_n"] &
        U_{n+1} \arrow[r, "f"] \arrow[d, "\theta_{n+1}"] & \ldots \\
        \D^{(0)} \arrow[r,"b_0"] \arrow[d,"\psi_0"] & \ldots
        \arrow[r,"b_{n-1}"] & \D^{(n)}  \arrow[r, "b_n"]
        \arrow[d, "\psi_{n}"] & \D^{(n+1)} \arrow[r, "b_{n+1}"]
        \arrow[d,"\psi_{n+1}"] & \ldots  \\
        D_0 \arrow[r,"P"] &\ldots \arrow[r, "P"] &
        D_{n} \arrow[r, "P"] & D_{n+1} \arrow[r,"P"] & \ldots
      \end{tikzcd}
      \caption{Commutative diagram of the maps in the
          proof of \Cref{thm:poly}.}
      \label{fig:comm-diag}
    \end{figure}
    
   Since $g$ does not have any finite asymptotic values,
      by \thmCref the map $f$ does not have any finite
      asymptotic values either. The only critical points of $g$ in
      $\Orb(\widehat{V})$ are the critical points of multiplicity
      $d-1$ in each $V_n$. After the surgery, our new map $f$ still
      has $d-1$ critical points in each $U_n$ (counted with
      multiplicity), given by $h_n^{-1}(\Crit(P))$, and no critical
      points in $\Orb(\widehat{U}) \setminus \widehat{U}$.

      Let $S_0 = \Orb(S(f)) \cap U_0$. Since $f$ does not have any
      asymptotic values, and all the critical points in the grand
      orbit of $U_0$ are contained in the sequence $(U_n)$, we have
      that $z_0 \in S_0$ iff there exist $k,m,n \ge 0$ and a critical
      point $c_m \in U_m$ such that $f^{n}(z_0) = f^{k}(c_m)$. Since
      $f^n(z_0) \in U_n$ and $f^k(c_m) \in U_{m+k}$, this is only
      possible if $n = m+k$ and thus $f^{m+k}(z_0) =
      f^{k}(c_m)$. Writing $c = h_m(c_m)$ and $w = h_0(z_0)$, this
      shows that $z_0 = h_0^{-1}(w) \in S_0$ iff there exist
      $k,m \ge 0$ and $c \in \Crit(P)$ such that
      $P^{m+k}(w) = P^k(c)$. Since $c \in D_0$, any solution
      $w \in \C$ of this equation (not assuming $w \in D_0$ a priori)
      satisfies $w \in P^{-(m+k)}(D_k) = P^{-m}(D_0) \subset D_0$,
      since $P^{-1}(D_0) \subset D_0$ and $P^{-1}(D_{n+1}) = D_n$ for all $n \ge 0$.  This shows that
      $S_0 = h_0^{-1}(A)$ for the set
      \[
        A  = \{ w \in \C : P^{m+k}(w)=P^k(c) \text{ for some } m,k \ge  
        0 \text{ and } c\in\Crit(P)\} \subset D_0,
      \]
      and thus
      \begin{equation}\label{keyclaim}
        \widehat{J}(f) \cap U_0 = \widehat{S}(f) \cap U_0 =
        \overline{S_0} =  h_0^{-1} (\overline{A})
        \subset h_0^{-1}(\widehat{J}(P) \cap D_0),
      \end{equation}
      where the first equality follows from the fact that $U_0$ does
      not contain any periodic points of $f$ and the last equality
      follows from the fact that 
      $\overline{A} \subset \widehat{S}(P) \subset
      \widehat{J}(P)$. Taking complements and applying $h_0$ to both
      sides we conclude that $\widehat{F}(P) \cap D_0 \subset
      h_0(\widehat{F}(f) \cap U_0)$, establishing \eqref{poly:b}.

      Since $A$ contains the backward orbit of all critical points, we
      have that $J(P)\subset \overline{A}$, so
      \[
        \widehat{J}(f) \cap U_0 = h_0^{-1}(\overline{A}) \supset
        h_0^{-1}(J(P)).
      \]
      Taking complements and applying $h_0$ yields
      $h_0(\widehat{F}(f) \cap U_0) \subset F(P) \cap D_0 \subset
      F(P)$, which shows that every component of
      $h_0(\widehat{F}(f) \cap U_0)$ is contained in some Fatou
      component of $P$.

      We now treat the different types of Fatou components separately.
      If $G$ is a Fatou component of $P$ associated with an
      attracting or parabolic periodic cycle, then
      $G \cap \widehat{S}(P)$ is a countable set, so
      $V = G \setminus \widehat{S}(P)$ is connected. Let $V^*$ be the
      connected component of $\widehat{F}(f) \cap U_0$ such that
      $V \subset h_0(V^*)$. Then $h_0(V^*) \subset G$, with
      $G \setminus h_0(V^*)$ countable. This shows that $G$ contains
      exactly one component $V$ of $\widehat{F}(P) \cap D_0$, and
      $h_0^{-1}(G)$ contains exactly one component $V^*$ of
      $\widehat{F}(f) \cap U_0$, establishing claims \eqref{poly:c}
      and \eqref{poly:d} for the case of components contained in
      attracting or parabolic basins.

      If $G$ is a Fatou component of $P$ associated with a cycle of
      Siegel disks, we claim that
      $\overline{A} \cap G = \widehat{S}(P) \cap G$. We know that
      $\overline{A} \subset \widehat{S}(P)$, so we only need to
      establish the reverse inclusion. Since
      $\widehat{S}(P) = \overline{\Orb(S(P))}$, it is enough to show
      that $\Orb(S(P)) \cap G \subset \overline{A}$. Let
      $z \in \Orb(S(P)) \cap G$. Then there exist $m,n \ge 0$ and
      $c \in \Crit(P)$ with $w^* = P^m(z) = P^n(c)$. By adding an
      integer constant to both $m$ and $n$, we may assume that $w^*$
      is in the cycle of Siegel disks. Then there exists a sequence of
      integers $k_j \to \infty$ such that $P^{k_j}(w) \to w$ uniformly
      in a neighborhood of $w^*$. In particular, there exists a
      sequence $w_j \to w^*$ such that $P^{k_j}(w_j) = w^*$. Since
      $P^m$ is an open mapping, there exists a sequence $z_j \to z$
      such that $P^{m}(z_j) = w_j$, so that
      $P^{m+k_j}(z_j) = P^{k_j}(w_j) = w^* = P^n(c)$. We may assume
      that $m+k_j \ge n$ for all $j$, so that $z_j \in A$ for all $j$,
      and hence $z \in \overline{A}$, as claimed.

      The claim directly implies that
      $h_0(\widehat{S}(f) \cap U_0) \cap G = \widehat{S}(P) \cap
      G$. We know that $\widehat{J}(f) \cap U_0 = \widehat{S}(f) \cap U_0$ and that
      $(\widehat{J}(P) \cap G) \setminus (\widehat{S}(P) \cap G)$ contains
      at most one point (if $G$ contains a periodic point which is not
      in $\widehat{S}(P)$), so for every component $V$ of
      $\widehat{F}(P) \cap G$ there exists a unique component $V^*$ of
      $\widehat{F}(f) \cap U_0$ such that either $V = h_0(V^*)$, or
      $V = h_0(V^*) \setminus \{z_0\}$ for some $z_0 \in G$. This
      establishes \eqref{poly:c} and \eqref{poly:d} in the case of
      Fatou components associated with Siegel disks.

      Lastly, if $G$ is a Fatou component of $P$ associated with a
      superattracting basin (including the case where $G$ is the
      basin of infinity), \eqref{poly:d} is vacuously true, and
      \eqref{poly:c} follows easily from the fact that
      $\widehat{F}(P)$ is open and dense in $F(P)$.

\end{proof}
 
\thmBref now follows readily, from the simplest possible
  instance of \Cref{thm:poly}.

\begin{proof}[Proof of \thmBref]
Let $P$ be a quadratic polynomial given by
    $P(z) \defeq \lambda z + z^2$ with $0<|\lambda|<1$, so that $P$
    has an attracting fixed point at $0$ and its Julia set is a
    quasicircle.  Applying \Cref{thm:poly} for this choice of $P$ and
    a sequence of domains $(D_n)$ as in \eqref{eq_poly_R}, we obtain a
    corresponding transcendental entire function $f$ with an orbit of
    simply connected wandering domains $(U_n)$. The set
    $\widehat{F}(P) \cap D_0$ has two connected components $V_1$ and
    $V_2$, where $V_1$ is the attracting basin of zero with infinitely
    many points removed, while $V_2$ is the intersection of the
    superattracting basin of infinity with $D_0$. The grand orbit
    relation of $P$ is discrete in $\GO(V_1)$ and indiscrete in
    $\GO(V_2)$, see \cite[Section~6]{McMS}. Let $V_1^*$ and $V_2^*$ be
    the corresponding components of $\widehat{F}(f) \cap U_0$ given by
    \Cref{thm:poly}\eqref{poly:b}.  Applying
    \Cref{lem_new_discrete_preserved} to $P$ and $f$, restricted to
    the orbits of $V_k$ and $V_k^*$, respectively, then shows that the
    grand orbit relation for $f$ is discrete in $\GO(V_1^*)$ and
    indiscrete in $\GO(V_2^*)$, concluding our proof.
  
\end{proof}

\begin{remark}
  For the sake of simplicity, in the proof of \thmBref we chose a
  quadratic polynomial with an attracting fixed point. The choice of
  other polynomials, possibly of higher degree, with different
  combinations of types of Fatou components would lead by
  \Cref{thm:poly} to the existence of wandering domains that decompose
  into different collections of components with the two types of grand
  orbit relations, once the closure of the singular set is removed.
\end{remark}

\bibliographystyle{amsalpha}
\bibliography{wandering,wandering-zotero}
\end{document}